\newtheorem{theorem}{Theorem}[section]
\newtheorem{proposition}[theorem]{Proposition}
\newtheorem{lemma}[theorem]{Lemma}
\newtheorem{corollary}[theorem]{Corollary}
\newtheorem*{maintheorem}{Theorem \ref{thm:main}}
\newtheorem*{lintheorem}{Theorem \ref{thm:max_k_odd_lin}}
\newtheorem*{circtheorem}{Theorem \ref{thm:max_k_odd_circ}}
\theoremstyle{definition}
\newtheorem{definition}[theorem]{Definition}
\newtheorem{example}[theorem]{Example}
\theoremstyle{remark}
\newtheorem{remark}[theorem]{Remark}
\numberwithin{equation}{section}
\def\L{{B}^{-}_{n,k}}
\def\B{{B}}
\def\C{{B}^{\circ}_{n,k}}
\title[Chained permutations and alternating sign matrices]{Chained permutations and alternating sign matrices - inspired by three-person chess}
\author[D.\ Heuer, C.\ Morrow, B.\ Noteboom, S.\ Solhjem, J.\ Striker, C.\ Vorland]{Dylan Heuer, Chelsey Morrow, Ben Noteboom, Sara Solhjem,\\ Jessica Striker, Corey Vorland}
\begin{document}

\begin{abstract} We define and enumerate two new two--parameter permutation families, namely, placements of a maximum number of non-attacking rooks on $k$ chained-together $n\times n$ chessboards, in either a circular or linear configuration. The linear case with $k=1$ corresponds to standard permutations of $n$, and the circular case with $n=4$ and $k=6$ corresponds to a three-person chessboard. We give bijections of these rook placements to matrix form, one-line notation, and matchings on certain graphs. Finally, we define chained linear and circular alternating sign matrices, enumerate them for certain values of $n$ and $k$, and give bijections to analogues of monotone triangles, square ice configurations, and fully-packed loop configurations.
\end{abstract}

\maketitle

\tableofcontents

\section{Introduction}
A typical enumeration problem given in an introductory combinatorics course is the following: How many ways are there to place $m$ non-attacking rooks on an $n\times n$ chessboard? The solution is to first choose which $m$ rows the rooks  occupy, in $\binom{n}{m}$ ways, then the falling factorial $(n)_m := n(n-1)(n-2)\cdots (n-m+1)$ counts the number of ways to place the $m$ rooks on those $m$ rows. So there are $\binom{n}{m}\left(n\right)_m$ such rook placements. In the special case of placing the maximum number $n$ of rooks on the $n\times n$ board, this reduces to $n!$.

One natural extension of this question is to change the rules for how the piece moves. For example, one may want to count non-attacking queen placements rather than rook placements; see~\cite{qQueens1,chess_website,chess_book}. A different extension of the question is to change the chessboard. The beautiful theory of \emph{rook polynomials}, studied by Goldman, Joichi, and White in \cite{RookTheory1,RookTheory2,RookTheory3,RookTheory4,RookTheory5}, discusses the generating function of the number of rook placements on any sub-board of the $n\times n$ board and shows when the generating function of two boards is equivalent.

This paper generalizes the theory of rook placements by considering a different kind of board, namely, a board created by chaining together multiple $n\times n$ chessboards in a particular way that we describe in Definition~\ref{def:lincirc}. 

This work was inspired by the board game \emph{three-person chess}.
Though the game had been gathering dust in the fifth author's closet and the directions for game play had been lost, the board still inspired the following combinatorial question: \textbf{How many ways are there to place $m$ non-attacking rooks on the three-person chessboard of Figure~\ref{fig:3personchessboard}?} 

\begin{figure}[htbp]
\begin{center}
\includegraphics[scale=.6]{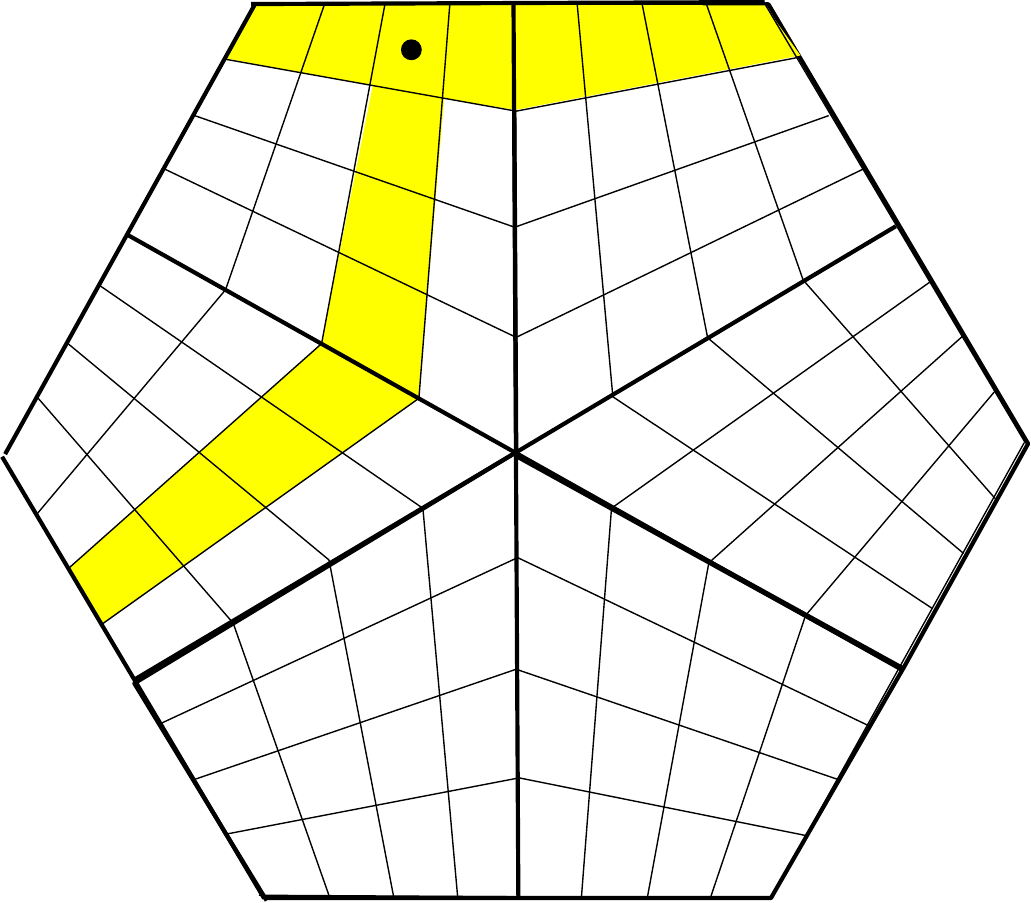}
\end{center}
\caption{A three-person chessboard; the dot represents a rook and the highlighted cells are the cells the rook is attacking. See Figures~\ref{fig:k6n4circle} and \ref{fig:k6n4circ} to see how this board transforms to $B^{\circ}_{4,6}$.}
\label{fig:3personchessboard}
\end{figure}

In this paper, we answer this question and generalize this result to a two-parameter family, namely, \emph{maximum rook placements on $k$ chained-together $n\times n$ boards} in either a \emph{linear} or \emph{circular} configuration. We highlight below our main results.

\smallskip
Our first main theorem, stated below, gives a formula for the number of non-attacking rook placements of $m$ rooks in either of these families for any values of $n$ and $k$. Let $\L$ denote the {\it linear configuration} of $k$ chained $n \times n$ chessboards and $\C$ the {\it circular configuration}; see Definition~\ref{def:lincirc}. Also, see Definition~\ref{def:comp} for the definition of $\mathfrak{C}_{m}(B)$.

\begin{maintheorem}
The number of ways to place $m$ non-attacking rooks on board $B\in \{\L, \C\}$ is \[ \sum_{(a_1, \dots, a_k) \in \mathfrak{C}_{m}(B)} \prod_{i=1}^k \binom{n-a_{i-1}}{a_i}(n)_{a_i} \] where $a_0$ is defined as follows:
\[ a_0 = \begin{cases} 
      0 & \textrm{ if $B= \L$ } \\
      a_k & \textrm{ if $B= \C$. } \\
   \end{cases} \]
\end{maintheorem}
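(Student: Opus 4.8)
The plan is to sort rook placements on $B$ by the vector $(a_1,\dots,a_k)$ recording the number of rooks lying on each of the $k$ constituent $n\times n$ boards, and then to count, for each fixed such vector, the placements that realize it. The first step is to check that the vector of any non-attacking placement lies in $\mathfrak{C}_{m}(B)$ (Definition~\ref{def:comp}): its entries sum to $m$, and in the chaining of Definition~\ref{def:lincirc} consecutive boards $i-1$ and $i$ are glued along a common set of $n$ lines (one side of each board), so two rooks on these two boards attack precisely when they share such a line; hence at every junction the $a_{i-1}$ occupied lines coming from board $i-1$ and the $a_i$ coming from board $i$ must be distinct, forcing $a_{i-1}+a_i\le n$ (with $a_0$ as in the statement). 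Since distinct placements with the same vector are disjoint events and every vector in $\mathfrak{C}_{m}(B)$ is attained, the count is $\sum_{(a_1,\dots,a_k)\in \mathfrak{C}_{m}(B)} N(a_1,\dots,a_k)$, where $N(a_1,\dots,a_k)$ is the number of placements with that prescribed distribution.

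To evaluate $N(a_1,\dots,a_k)$ I would encode a placement by its restrictions to the individual boards: on board $i$ the rooks form a partial permutation matrix, given by an $a_i$-subset $P_i$ of the rows used, an $a_i$-subset $Q_i$ of the columns used, and a bijection $\phi_i\colon P_i\to Q_i$ pairing them; the cross-board non-attacking condition says exactly that at each shared line-set the two incident subsets are disjoint. Now build this data in the order (for the circular case) $Q_k$, then $P_1,Q_1,P_2,Q_2,\dots,P_{k-1},Q_{k-1},P_k$, and finally all of the $\phi_i$; in the linear case simply omit the initial $Q_k$ and start with $P_1$. Each $Q_i$ is chosen with no constraint, $\binom{n}{a_i}$ ways, because its disjointness partner on that line-set is selected later; each $P_i$ must avoid the $a_{i-1}$ lines already used to its left, $\binom{n-a_{i-1}}{a_i}$ ways, where $a_0=a_k$ in the circular case (its partner $Q_k$ having been chosen first) and $a_0=0$ in the linear case (board $1$'s rows being a fresh line-set unshared on the left), which is precisely the case distinction in the theorem; and each $\phi_i$ contributes $a_i!$. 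Multiplying the pieces $\binom{n}{a_i}a_i!=(n)_{a_i}$ against the pieces $\binom{n-a_{i-1}}{a_i}$ gives $N(a_1,\dots,a_k)=\prod_{i=1}^k\binom{n-a_{i-1}}{a_i}(n)_{a_i}$, and summing over $\mathfrak{C}_{m}(B)$ yields the stated formula.

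The step I expect to require the most care is the cyclic interaction in the circular case: placing rooks board by board from $1$ to $k$ leaves board $k$ constrained by board $1$'s already-placed rooks, and it is not obvious that this reproduces the factor $\binom{n-a_k}{a_1}(n)_{a_1}$ attached to board $1$. Choosing $Q_k$ before $P_1$, as above, is the device that sidesteps this; alternatively one can run the naive board-by-board count and reconcile it with the formula using $\binom{n}{a_1}\binom{n-a_1}{a_k}=\binom{n}{a_k}\binom{n-a_k}{a_1}$. Minor points still to verify, all immediate from Definition~\ref{def:lincirc}, are that rooks on non-consecutive boards can never attack each other, and that when two adjacent boards happen to share more than one line-set (as for $k=2$ in the circular case) the disjointness conditions at the different shared line-sets are independent, so the count above is unaffected.
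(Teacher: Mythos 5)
Your proof is correct and follows essentially the same route as the paper: a sequential count, for each fixed composition, of the row-sets, column-sets, and bijections board by board, choosing each constrained line-set after its partner across the junction, with the circular wrap-around handled by pulling one choice out of the cycle (you fix $Q_k$ first; the paper equivalently defers the columns of $B^{(1)}$ to the very end). The only cosmetic discrepancy is that Definition~\ref{def:lincirc} glues the \emph{rows} of $B^{(i-1)}$ to the \emph{columns} of $B^{(i)}$, so under the paper's convention it is $Q_i$ rather than $P_i$ that is constrained by the previous board; this transposition does not affect any of the counts.
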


We use this theorem to determine exact counts of placements of the \emph{maximum} number of non-attacking rooks on each board. 

\begin{lintheorem}
The number of maximum rook placements on $\L$ is given by:
\begin{itemize}
\item {Case $k$ even:} $\mbox{ }(n!)^{\frac{k}{2}} \displaystyle\sum_{0\leq j_1\leq\ldots\leq j_{\frac{k}{2}}\leq n} \mbox{ }\displaystyle\prod_{\ell=1}^{\frac{k}{2}} \binom{n-j_{\ell-1}}{n-j_{\ell}}\binom{n}{j_{\ell}}$,
\item {Case $k$ odd:} $\mbox{ }\left(n!\right)^{\frac{k+1}{2}}$.
\end{itemize}
\end{lintheorem}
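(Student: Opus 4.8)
The plan is to invoke the Main Theorem (Theorem~\ref{thm:main}) with $B=\L$, where $a_0=0$, and to read off the answer by (i) determining the maximum number $m$ of non-attacking rooks, and (ii) evaluating $\sum_{(a_1,\dots,a_k)\in\mathfrak{C}_m(\L)}\prod_{i=1}^k\binom{n-a_{i-1}}{a_i}(n)_{a_i}$ over exactly the compositions realizing that maximum. Since each summand is a nonnegative integer, the maximum number of rooks is the largest $m$ for which some composition in $\mathfrak{C}_m(\L)$ contributes a nonzero term; recalling Definition~\ref{def:comp}, these are precisely the weak compositions $(a_1,\dots,a_k)$ of $m$ with $a_{i-1}+a_i\le n$ for every $i$ (with $a_0=0$), which I will call \emph{admissible}.

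To pin down $m$, I would sum the inequality $a_{i-1}+a_i\le n$ over a set of index blocks that partitions $\{1,\dots,k\}$: the consecutive pairs $\{1,2\},\{3,4\},\dots,\{k-1,k\}$ when $k$ is even, and the singleton $\{1\}$ together with the pairs $\{2,3\},\{4,5\},\dots,\{k-1,k\}$ when $k$ is odd (using $a_0+a_1\le n$, i.e.\ $a_1\le n$, for the singleton). This yields $m=\sum_i a_i\le n\lceil k/2\rceil$, i.e.\ $m\le \frac{k}{2}\,n$ for $k$ even and $m\le\frac{k+1}{2}\,n$ for $k$ odd. The admissible composition $(n,0,n,0,\dots)$ attains the bound, so these are the true maxima.

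For $k$ odd, equality in the bound forces $a_1=n$ and $a_{2j}+a_{2j+1}=n$ for all $j$; substituting back into the constraints $a_{2j-1}+a_{2j}\le n$ forces $a_2=0$, hence $a_3=n$, hence $a_4=0$, and so on, so the maximizing admissible composition is unique: $a_i=n$ for $i$ odd and $a_i=0$ for $i$ even. Its product has factor $\binom{n}{n}(n)_n=n!$ at each odd index and $\binom{0}{0}(n)_0=1$ at each even index, giving $(n!)^{(k+1)/2}$. For $k$ even the maximizers are exactly the admissible compositions with $a_{2j-1}+a_{2j}=n$ for all $j=1,\dots,\frac k2$, the only remaining restrictions being $a_{2j}+a_{2j+1}\le n$. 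Writing $b_j:=a_{2j}$ (so $a_{2j-1}=n-b_j$, and $b_0:=a_0=0$) converts these into $0\le b_1\le b_2\le\dots\le b_{k/2}\le n$. Pairing the factors at indices $2j-1$ and $2j$, the pair contributes $\binom{n-b_{j-1}}{n-b_j}(n)_{n-b_j}\cdot\binom{b_j}{b_j}(n)_{b_j}=n!\,\binom{n-b_{j-1}}{n-b_j}\binom{n}{b_j}$, using $(n)_{n-b}(n)_b=\tfrac{n!}{b!}\cdot\tfrac{n!}{(n-b)!}=n!\binom{n}{b}$. Multiplying over $j=1,\dots,\frac k2$ extracts $(n!)^{k/2}$ and leaves $\prod_{\ell=1}^{k/2}\binom{n-j_{\ell-1}}{n-j_\ell}\binom{n}{j_\ell}$ after renaming $b_\ell\to j_\ell$; summing over all weakly increasing $0\le j_1\le\dots\le j_{k/2}\le n$ gives the stated formula.

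The main obstacle will be the even case: one must verify that $b_j:=a_{2j}$ really is a bijection between maximizing admissible compositions and weakly increasing sequences $0\le b_1\le\dots\le b_{k/2}\le n$ — in particular that every such sequence produces an admissible composition and that no maximizer is overlooked — and then check the factorial identity that collapses each paired factor to the clean summand. Once that is established the computation is routine, and the odd case is comparatively immediate, needing only the uniqueness of the maximizer.
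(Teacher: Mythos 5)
Your proposal is correct and follows essentially the same route as the paper: establish the maximum $n\lceil k/2\rceil$ and characterize the maximizing compositions (the content of Lemma~\ref{prop:maxodd}), then apply Theorem~\ref{thm:main} and simplify using $(n)_{n-b}(n)_b=n!\binom{n}{b}$. Your uniqueness argument for the odd case (forcing $a_2=0$, $a_3=n$, \dots\ directly from the equalities) is a slightly slicker substitute for the paper's counting contradiction, and you carry out explicitly the algebra the paper dismisses as ``algebraic manipulation,'' but the overall structure is identical.
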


\begin{circtheorem}
The number of maximum rook placements on $\C$ is given by:
\begin{itemize}
\item {Case $k$ even:}
$\mbox{ }
\left(n!\right)^{\frac{k}{2}}\displaystyle\sum_{j=0}^n \binom{n}{j}^{\frac{k}{2}}$,
\item {Case $k$ odd, $n$ even:} $\mbox{ }\left(\left(n\right)_{\frac{n}{2}}\right)^k$,
\item {Case $k$ odd, $n$ odd:} $
\mbox{ }k\lceil\frac{n}{2}\rceil\left(\left(n\right)_{\lceil\frac{n}{2}\rceil}\right)^{\lfloor\frac{k}{2}\rfloor}\left(\left(n\right)_{\lfloor\frac{n}{2}\rfloor}\right)^{\lceil\frac{k}{2}\rceil}$.
\end{itemize}
\end{circtheorem}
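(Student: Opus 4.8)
The plan is to apply Theorem~\ref{thm:main} with $B=\C$ and $m$ equal to the maximum number of non-attacking rooks, so that the desired count becomes $\sum \prod_{i=1}^{k}\binom{n-a_{i-1}}{a_i}(n)_{a_i}$ taken over the \emph{maximal} elements of $\mathfrak{C}_m(\C)$ (Definition~\ref{def:comp}), where $a_0=a_k$. A summand is nonzero exactly when every $a_i$ satisfies $0\le a_i\le n$ and $a_{i-1}+a_i\le n$ (indices cyclic), and adding the $k$ inequalities $a_{i-1}+a_i\le n$ gives $2\sum_i a_i\le nk$, hence $m\le\lfloor nk/2\rfloor$. The three bullets will then follow from three sub-tasks: show this bound is attained, list exactly the compositions attaining it, and evaluate the product on each and sum.

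For the characterization I would introduce the slack variables $b_i:=n-a_{i-1}-a_i\ge 0$, so that attaining the bound forces $\sum_i b_i\le 1$. If $\sum_i b_i=0$ then $a_{i-1}+a_i=n$ for all $i$, which forces $a_{i+2}=a_i$; for $k$ even this pins the composition down to the $2$-periodic pattern $(j,n-j,j,n-j,\dots)$ with $j\in\{0,1,\dots,n\}$ arbitrary ($n+1$ maximal compositions, giving the even case), while for $k$ odd one has $\gcd(2,k)=1$, so all entries are equal and $a_i\equiv n/2$; this is an integer exactly when $n$ is even (the unique maximal composition, giving the odd/even case) and is impossible when $n$ is odd. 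The remaining possibility $\sum_i b_i=1$ occurs precisely when $nk$ is odd, i.e.\ $k$ and $n$ are both odd and $\lfloor nk/2\rfloor=(nk-1)/2$; then exactly one $b_j$ equals $1$, and chasing the equations $a_{i-1}+a_i=n$ $(i\ne j)$ and $a_{j-1}+a_j=n-1$ around the cycle forces $a_{j-1}=a_j=\lfloor n/2\rfloor$ with the remaining entries alternating $\lceil n/2\rceil,\lfloor n/2\rfloor,\dots$. So the maximal compositions are exactly the $k$ cyclic rotations of $(\lfloor n/2\rfloor,\lceil n/2\rceil,\dots,\lfloor n/2\rfloor)$, pairwise distinct since the unique adjacent pair of equal entries can occupy any of the $k$ positions.

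Finally I would evaluate the products, which collapse dramatically: whenever $a_{i-1}+a_i=n$ the factor $\binom{n-a_{i-1}}{a_i}=\binom{a_i}{a_i}=1$. In the even case the $k/2$ entries equal to $j$ and $k/2$ equal to $n-j$ leave $\prod_i(n)_{a_i}=\big((n)_j(n)_{n-j}\big)^{k/2}=(n!)^{k/2}\binom{n}{j}^{k/2}$ via the identity $(n)_j(n)_{n-j}=n!\binom{n}{j}$, and summing over $j$ gives the stated formula; in the odd/even case the single composition gives $\big((n)_{n/2}\big)^k$. In the odd/odd case each rotation has exactly one nontrivial binomial factor, $\binom{\lceil n/2\rceil}{\lfloor n/2\rfloor}=\lceil n/2\rceil$ (from the $b_j=1$ pair), while the surviving $\prod_i(n)_{a_i}$ has $\lceil k/2\rceil$ factors $(n)_{\lfloor n/2\rfloor}$ and $\lfloor k/2\rfloor$ factors $(n)_{\lceil n/2\rceil}$; since a cyclic rotation of the composition does not change the product, all $k$ rotations contribute the same value $\lceil n/2\rceil\big((n)_{\lfloor n/2\rfloor}\big)^{\lceil k/2\rceil}\big((n)_{\lceil n/2\rceil}\big)^{\lfloor k/2\rfloor}$, and the total is $k$ times this. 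I expect the real work to be the odd/odd characterization — verifying that one unit of slack yields precisely those $k$ compositions and no others — with the binomial bookkeeping being routine.
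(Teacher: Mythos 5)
Your proposal is correct and follows the paper's overall route --- classify the compositions attaining the maximum, then feed them into Theorem~\ref{thm:main} --- but the classification step is argued differently, and in the hardest case more cleanly. The paper isolates the classification as Lemma~\ref{prop:maxeven}; in the $k$ odd, $n$ odd case it proceeds by showing every entry must lie in $\{\frac{n-1}{2},\frac{n+1}{2}\}$ via a board-removal argument that invokes the \emph{linear} bound of Lemma~\ref{prop:maxodd}, and then counts the admissible arrangements of high/low entries around the cycle. Your slack variables $b_i=n-a_{i-1}-a_i$ replace all of this: the identity $\sum_i b_i=nk-2\sum_i a_i$ gives the upper bound, determines $\sum_i b_i\in\{0,1\}$ by the parity of $nk$, and the equation-chasing $a_{i+2}=a_i$ around the odd cycle pins down the compositions in all three cases at once, with no appeal to the linear case. (Your observation that the binomial factor $\binom{n-a_{i-1}}{a_i}$ vanishes exactly when $a_{i-1}+a_i>n$ is implicitly the content of Lemma~\ref{lem:compositions}, which you should cite for the realizability direction, i.e., that the compositions you find actually arise from rook placements.) What the paper's argument buys is reuse of the linear machinery and an explicit statement that no entry can stray from $\{\lfloor n/2\rfloor,\lceil n/2\rceil\}$; what yours buys is a shorter, self-contained, and uniform treatment. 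The final binomial bookkeeping --- $\binom{a_i}{a_i}=1$ when $a_{i-1}+a_i=n$, the identity $(n)_j(n)_{n-j}=n!\binom{n}{j}$, and the single factor $\binom{\lceil n/2\rceil}{\lfloor n/2\rfloor}=\lceil n/2\rceil$ at the defective pair --- is identical to the paper's and correct.
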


We then shift from discussing rook placements to the study of \emph{chained permutations}, which are equivalent to maximum rook placements on these boards. In \textbf{Theorems~\ref{prop:oneline} and \ref{prop:matching}}, we transform chained permutations into forms analogous to the one-line notation and perfect matching form of standard permutations.

Finally, we define \emph{chained alternating sign matrices} (Definition~\ref{def:chainedASM}). In \textbf{Proposition~\ref{prop:asm_n_k1} through Corollary~\ref{cor:qtasm}} we enumerate them for special values of $n$ and $k$; in \textbf{Theorems~\ref{prop:mt}, \ref{prop:sqice}, and \ref{prop:fpl}}, we transform them into forms analogous to monotone triangles, square ice configurations, and fully-packed loop configurations.

\smallskip
\textbf{Our outline is as follows.} In Section~\ref{sec:enum}, we define the boards $\L$ and $\C$ and prove Theorems~\ref{thm:main}, \ref{thm:max_k_odd_lin}, and \ref{thm:max_k_odd_circ} which enumerate non-attacking rook placements on these boards. In Section 3, we transform the maximum rook placements to chained permutations and prove Theorems~\ref{prop:oneline} and \ref{prop:matching} which give further bijections. In Section 4, we define chained alternating sign matrices, enumerate them in special cases, and prove the further bijections of Theorems~\ref{prop:mt}, \ref{prop:sqice}, and \ref{prop:fpl}.

\section{Enumeration of non-attacking rook placements on chained chessboards}
\label{sec:enum}
\subsection{Definitions and general enumeration result}
We begin by defining the boards and rook placements we will be discussing throughout this paper.

\begin{definition}
\label{def:lincirc}
Let $\L$ be a $k$-tuple $\{B^{(1)},\ldots,B^{(k)}\}$ of $n\times n$ chessboards. We say two rooks are \emph{attacking on} $\L$ if they are in the same row or column on the same board or if one is in the $j$th row of $B^{(i-1)}$ and the other is in the $j$th column of $B^{(i)}$, for some $1\leq j\leq n$ and $2\leq i\leq k$. We call $\L$ the {\it linear configuration} of $k$ chained $n\times n$ chessboards; see Figure~\ref{fig:k3n5lin}.

Let $\C$ be a $k$-tuple $\{B^{(1)},\ldots,B^{(k)}\}$ of $n\times n$ chessboards. We say two rooks are \emph{attacking on} $\C$ if they are in the same row or column on the same board or if one is in the $j$th row of $B^{(i-1)}$ and the other is in the $j$th column of $B^{(i)}$, for some $1\leq j\leq n$ and $1\leq i\leq k$, where we consider $B^{(0)}\equiv B^{(k)}$.
We call $\C$ the {\it circular configuration} of $k$ chained $n \times n$ chessboards; see Figure~\ref{fig:k6n4circ}.

A collection of rooks is \emph{non-attacking} if no pair is attacking.
\end{definition}

\begin{figure}[htbp]
\begin{center}
\includegraphics[scale=.5]{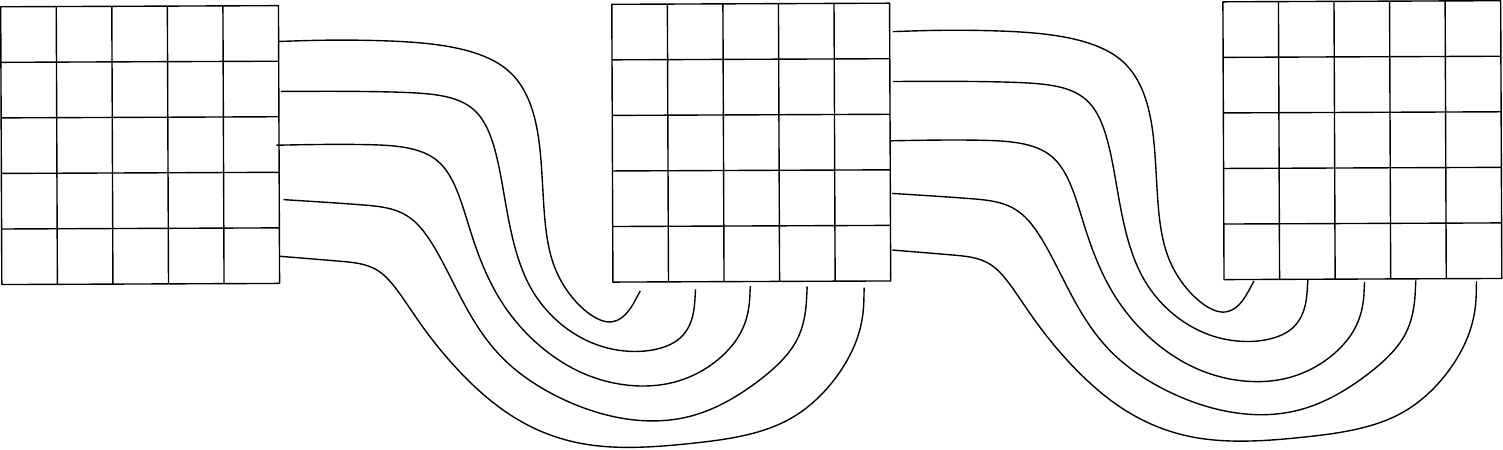}
\end{center}
\caption{The board $B^{-}_{5,3}$ from Definition~\ref{def:lincirc}, drawn with lines connecting each row of $B^{(i-1)}$ with its attacking column of $B^{(i)}$.}
\label{fig:k3n5lin}
\end{figure}

\begin{figure}[htbp]
\begin{center}
\includegraphics[scale=.5]{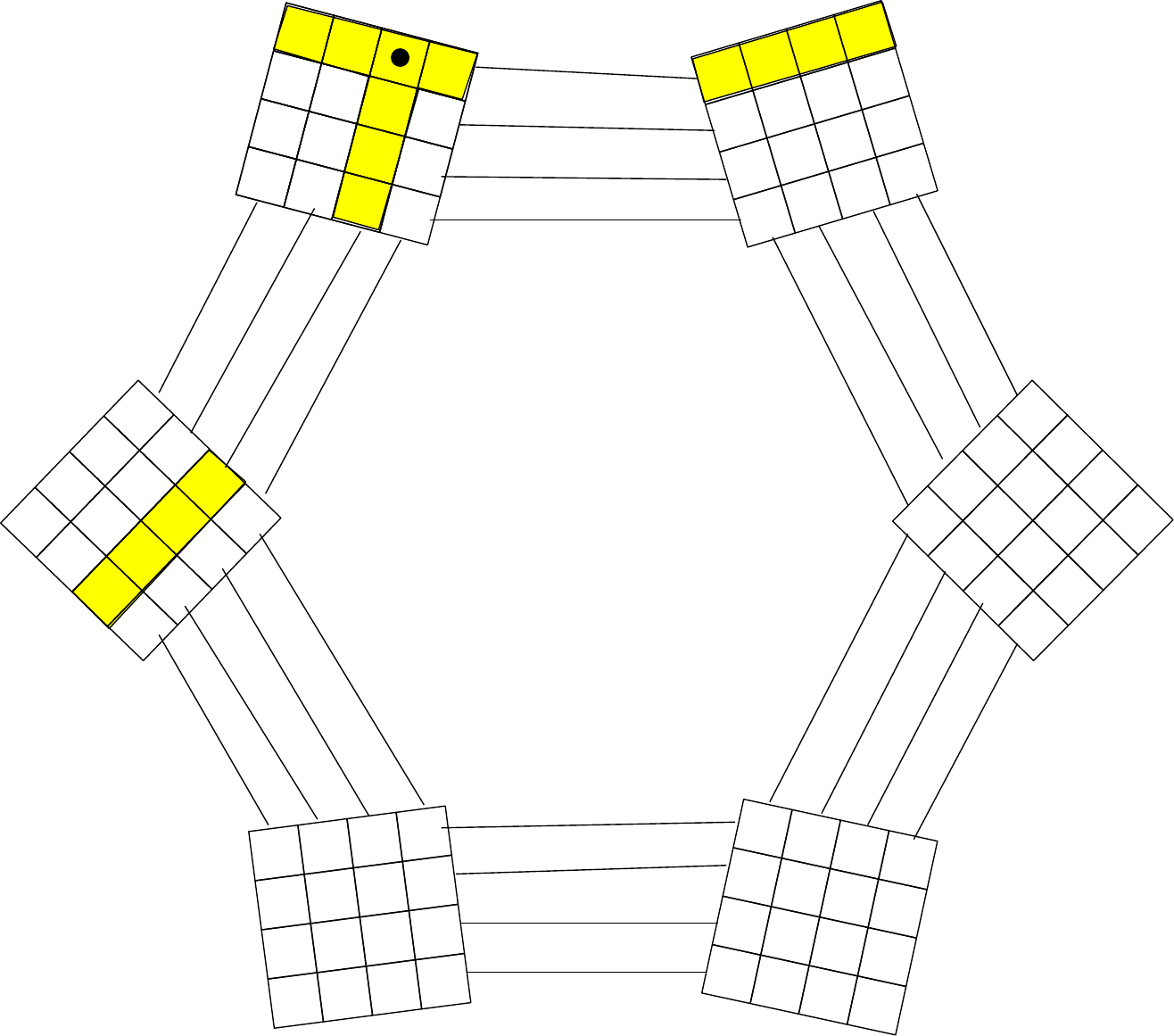}
\end{center}
\caption{The three-person chessboard from Figure~\ref{fig:3personchessboard}, expanded as a transitional step toward drawing it in the standard way of Figure~\ref{fig:k6n4circ}.}
\label{fig:k6n4circle}
\end{figure}

\begin{figure}[htbp]
\begin{center}
\includegraphics[scale=.9]{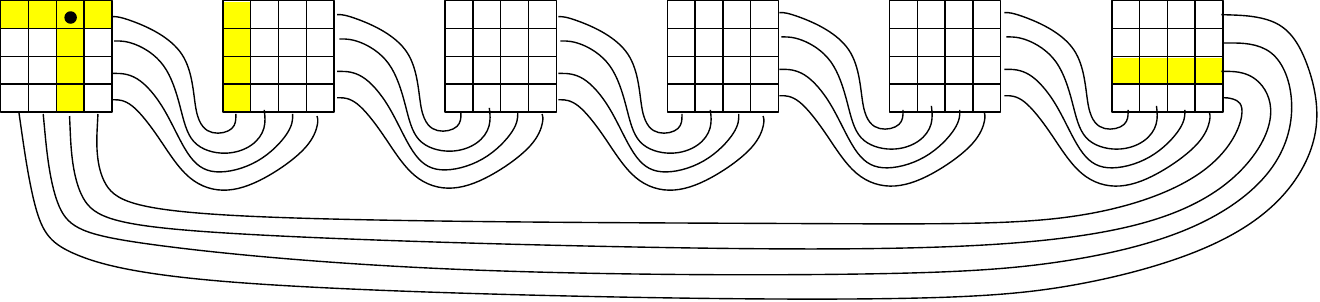}
\end{center}
\caption{The board $B^{\circ}_{4,6}$ from Definition~\ref{def:lincirc}, drawn with lines connecting each row of $B^{(i-1)}$ with its attacking column of $B^{(i)}$.}
\label{fig:k6n4circ}
\end{figure}

We now state and prove our first main result, Theorem~\ref{thm:main}.
We begin by considering the following natural questions:
\begin{enumerate}
\item What is the maximum number of non-attacking rooks we may place on $\L$ or $\C$?
\item Given a fixed number of rooks $m$, in how many different ways may we place those $m$ rooks on $\L$ or $\C$ so that they are all non-attacking?
\end{enumerate}
We  answer (1) in Lemmas~\ref{prop:maxodd} and~\ref{prop:maxeven} and (2) in Theorems~\ref{thm:main},  \ref{thm:max_k_odd_lin}, and \ref{thm:max_k_odd_circ}. First, we establish some terminology used throughout this paper.

\begin{definition}
\label{def:comp}
To each placement of non-attacking rooks on $\L$ or $\C$, associate a composition $(a_1,a_2,\ldots,a_k)$ where $a_i$ equals the number of rooks placed on $B^{(i)}$.
Define $\mathfrak{C}^{-}_{m,n,k}$ as the set of all such compositions that arise from a  placement of $m$ non-attacking rooks on $\L$. 
Define $\mathfrak{C}^{\circ}_{m,n,k}$ similarly using $\C$ instead of $\L$. 

Let $\mathfrak{C}_{m,n,k}$ denote either of $\mathfrak{C}^{-}_{m,n,k}$ or $\mathfrak{C}^{\circ}_{m,n,k}$, and let $\mathfrak{C}_m(B)$ be the set of compositions corresponding to  placements of $m$ non-attacking rooks on board $B\in\{\L,\C\}$.

See Figures~\ref{fig:k3n5circrooks} and \ref{fig:k3n5circ141} for examples.
\end{definition}

\begin{figure}[htbp]
\begin{center}
\includegraphics[scale=.5]{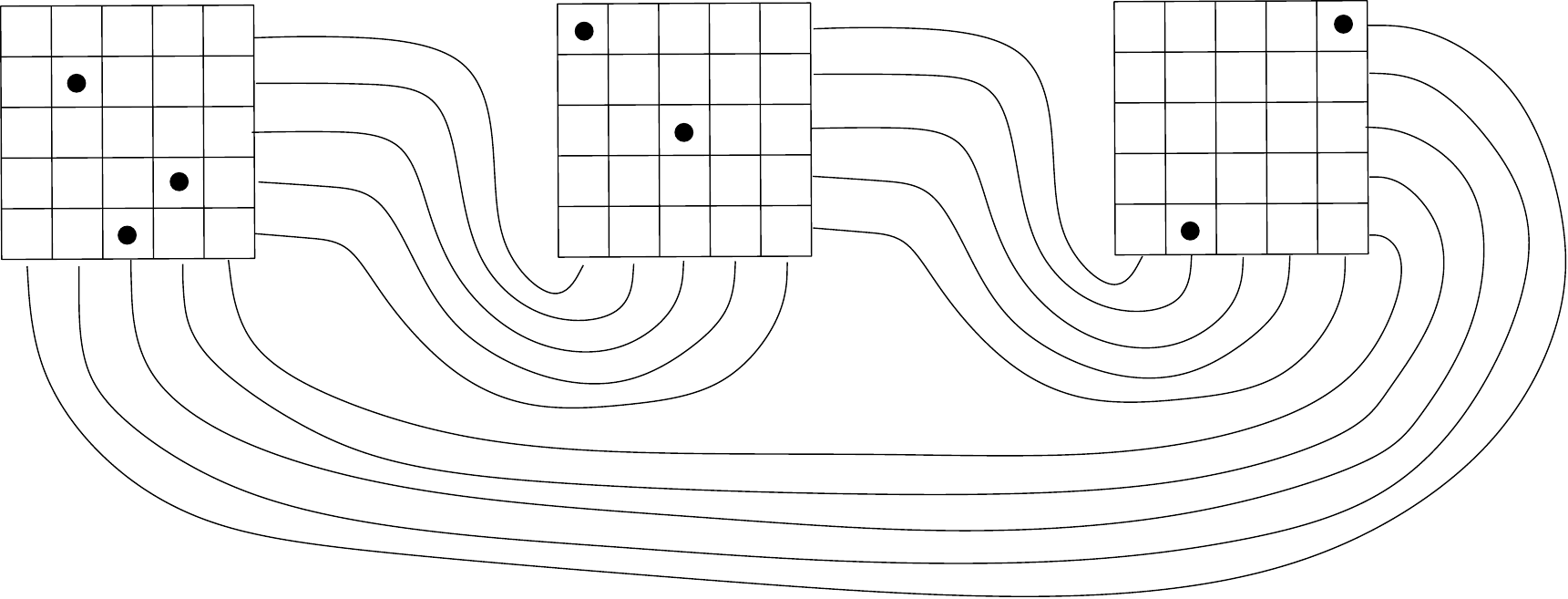}
\end{center}
\caption{A maximum non-attacking rook placement on $B^{\circ}_{5,3}$ with composition $(3,2,2)$ in $\mathfrak{C}^{\circ}_{7,3,5}$.}
\label{fig:k3n5circrooks}
\end{figure}

\begin{figure}[htbp]
\begin{center}
\includegraphics[scale=.5]{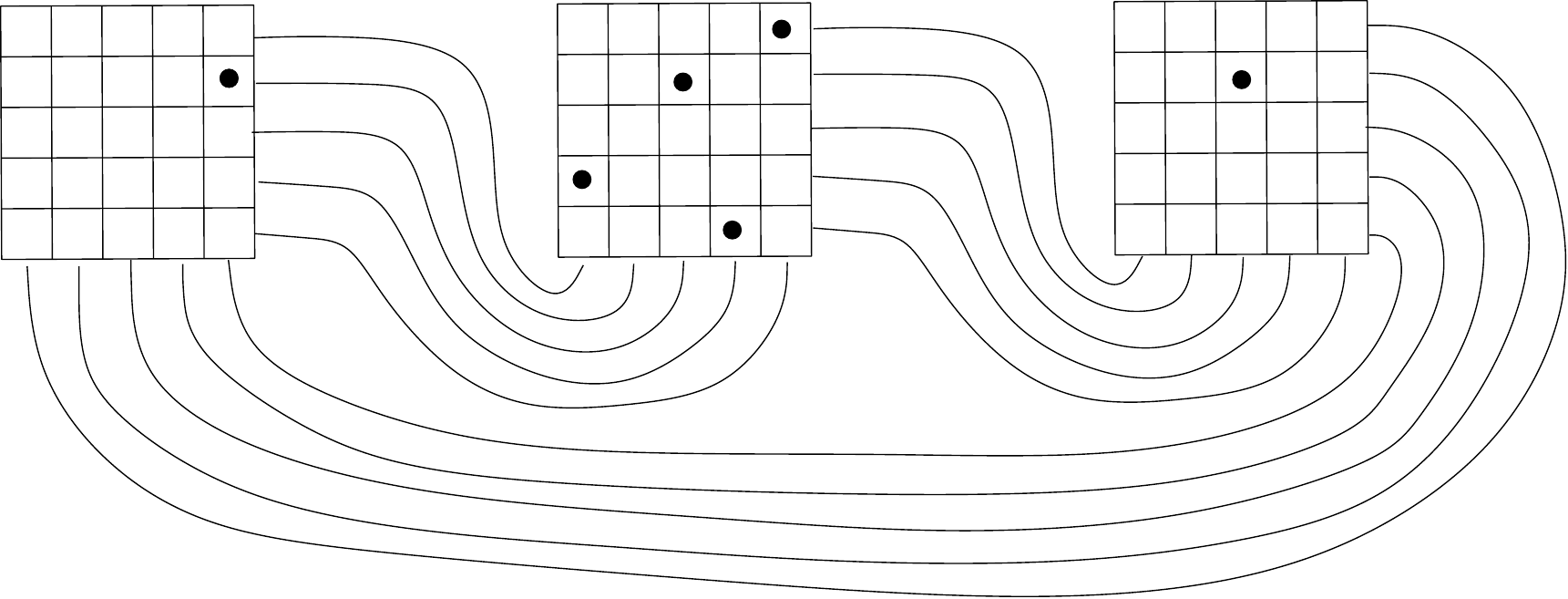}
\end{center}
\caption{A non-attacking rook placement on $B^{\circ}_{5,3}$ with composition (1,4,1) in $\mathfrak{C}^{\circ}_{6,3,5}$; note this arrangement is a maximal rook placement, but not a maximum rook placement.}
\label{fig:k3n5circ141}
\end{figure}

\begin{lemma} 
\label{lem:compositions}
$(a_1,a_2,\ldots,a_k)\in \mathfrak{C}_{m,n,k}$ if and only if 
$a_{i-1} + a_{i} \leq n$ for all $1\leq i\leq k$, where in the linear case $\mathfrak{C}^{-}_{m,n,k}$ we set $a_0=0$ and in the circular case $\mathfrak{C}^{\circ}_{m,n,k}$ we set $a_0=a_k$.
\end{lemma}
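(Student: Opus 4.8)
The plan is to prove both directions of the equivalence by translating the attacking conditions of Definition~\ref{def:lincirc} into arithmetic constraints on the numbers $a_i$ of rooks on each board $B^{(i)}$.

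\textbf{Forward direction (necessity).} Suppose $(a_1,\dots,a_k)$ arises from a non-attacking placement of $m$ rooks on $B$. Fix an index $i$ with $2\le i\le k$ (and also $i=1$ in the circular case, with $B^{(0)}\equiv B^{(k)}$). On board $B^{(i-1)}$, the $a_{i-1}$ rooks occupy $a_{i-1}$ distinct rows, say the rows indexed by a set $R\subseteq\{1,\dots,n\}$ with $|R|=a_{i-1}$; since rooks on the same board share no row, these are genuinely distinct. The attacking rule says a rook in row $j$ of $B^{(i-1)}$ attacks any rook in column $j$ of $B^{(i)}$, so on $B^{(i)}$ the $a_i$ rooks must avoid every column in $R$ — hence they occupy $a_i$ distinct columns among the remaining $n-a_{i-1}$ columns. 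This forces $a_i\le n-a_{i-1}$, i.e.\ $a_{i-1}+a_i\le n$. In the linear case, the board $B^{(1)}$ has no predecessor, so the only constraint there is $a_1\le n$ (the rooks lie in $n$ columns), which is exactly $a_0+a_1\le n$ with the convention $a_0=0$.

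\textbf{Reverse direction (sufficiency).} Conversely, assume $a_{i-1}+a_i\le n$ for all relevant $i$ (with the stated convention on $a_0$); I will construct an explicit non-attacking placement realizing the composition $(a_1,\dots,a_k)$. Process the boards in order $i=1,2,\dots,k$. For each $i$, the set $C_i$ of ``forbidden columns'' on $B^{(i)}$ is the set of row-indices used by rooks on $B^{(i-1)}$ (empty when $i=1$ in the linear case); by the previous step $|C_i|=a_{i-1}$, so there are $n-a_{i-1}\ge a_i$ available columns. Choose any $a_i$ of them and any $a_i$ distinct rows, and place the $a_i$ rooks in bijective (e.g.\ increasing-to-increasing) correspondence between those rows and columns; this automatically makes the rooks on $B^{(i)}$ pairwise non-attacking within the board and non-attacking with those on $B^{(i-1)}$. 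The only subtlety is the circular case, where the choice on $B^{(k)}$ must also be compatible with the already-fixed choice on $B^{(1)}$: the rows used on $B^{(k)}$ must avoid the columns used on $B^{(1)}$. One clean way to guarantee this is to first fix, for every $i$, both the row-set and the column-set used on $B^{(i)}$ in a way that respects all $k$ inequalities simultaneously — for instance take the rows on $B^{(i)}$ to be $\{1,\dots,a_i\}$ and the columns on $B^{(i)}$ to be $\{n-a_{i-1}+1,\dots,n\}$ (disjoint from rows $\{1,\dots,a_{i-1}\}$ of $B^{(i-1)}$ precisely because $a_{i-1}+a_i\le n$), then place rooks along the ``diagonal'' matching within each board.

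\textbf{Main obstacle.} The routine verification is entirely bookkeeping; the one place that needs care is the circular case, where the cyclic dependence means a naive greedy construction could paint itself into a corner on the last board $B^{(k)}$. The fix above — committing to canonical row- and column-sets on all boards up front, which are pairwise compatible exactly when every $a_{i-1}+a_i\le n$ holds — resolves this, so I expect the proof to be short once that construction is written down carefully. It is worth remarking (and easy to check from the construction) that the hypothesis involves only consecutive pairs $a_{i-1},a_i$ and never imposes any global bound on $m=\sum a_i$ beyond what these local inequalities entail.
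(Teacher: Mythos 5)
Your proof is correct and follows essentially the same route as the paper: the necessity argument is the same "rows of $B^{(i-1)}$ block columns of $B^{(i)}$" counting, and the sufficiency argument is the paper's explicit diagonal construction, with your up-front choice of canonical row- and column-sets on every board handling the circular wrap-around slightly more uniformly than the paper's special-casing of $B^{(k)}$. One slip: the column set on $B^{(i)}$ should be $\{n-a_i+1,\dots,n\}$ (which has $a_i$ elements), not $\{n-a_{i-1}+1,\dots,n\}$; with that fix the disjointness from $\{1,\dots,a_{i-1}\}$ is exactly the inequality $a_{i-1}+a_i\le n$, as you assert, and everything goes through.
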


\begin{proof}
Suppose $(a_1,a_2,\ldots,a_k)\in \mathfrak{C}^{-}_{m,n,k}$. Then there exists a non-attacking rook placement on $\L$ with composition $(a_1,\ldots,a_k)$. A rook in row $j$ of $B^{(i-1)}$ would be attacking with a rook in column $j$ of $B^{(i)}$, $2\leq i\leq k$ so there may be at most one rook in each row/column pair. Thus, $a_{i-1} + a_{i} \leq n$ for $2\leq i\leq k$, and certainly $a_1\leq n$.

Suppose $(a_1,a_2,\ldots,a_k)\in \mathfrak{C}^{\circ}_{m,n,k}$. Then there exists a non-attacking rook placement on $\C$ with composition $(a_1,\ldots,a_k)$. A rook in row $j$ of $B^{(i-1)}$ would be attacking with a rook in column $j$ of $B^{(i)}$, $1\leq i\leq k$ (where $B^{(0)}\equiv B^{(k)}$) so there may be at most one rook in each row/column pair. Thus, $a_{i-1} + a_{i} \leq n$ for $2\leq i\leq k$, and $a_k+a_1\leq n$.

Suppose $(a_1,\ldots, a_k)$ satisfies $a_{i-1} + a_{i} \leq n$ for all $1\leq i\leq k$, where we set $a_0=0$. We exhibit a non-attacking rook placement in $\L$ with this composition. 
Place rooks in row $\ell_1$ column $\ell_1$ of $B^{(1)}$ for $1\leq \ell_1\leq a_{1}$. Then in $B^{(2)}$, the first $a_1$ columns cannot contain a rook, since rooks in these columns would be attacking with the rooks on $B^{(1)}$. So place rooks on $B^{(2)}$ in row $\ell_2$ column $a_1+\ell_2$ for  $1\leq \ell_2\leq a_{2}$. Continue in this way placing rooks on $B^{(i)}$ in row $\ell_i$ column $a_{i-1}+\ell_i$ for $1\leq \ell_i\leq a_{i}$. Since $(a_1,\ldots, a_k)$ satisfies $a_{i-1} + a_{i} \leq n$ for all $1\leq i\leq k$ (with $a_0=0$), no $B^{(i)}$ will run out of available columns on which to place the rooks.

Suppose $(a_1,\ldots, a_k)$ satisfies $a_{i-1} + a_{i} \leq n$ for all $1\leq i\leq k$, where we set $a_0=a_k$. We exhibit a non-attacking rook placement in $\C$ with this composition in the same way as in the linear case, except that no rook may be placed on the first $a_{1}$ rows of $B^{(k)}$, due to the rooks placed on $B^{(1)}$. So place rooks on $B^{(k)}$ in row $a_1+\ell_k$ column $a_{k-1}+\ell_k$ for $1\leq \ell_k\leq a_k$.
\end{proof}

We now present our first main result.

\begin{theorem}
\label{thm:main}
The number of ways to place $m$ non-attacking rooks on board $B\in \{\L, \C\}$ is \[ \sum_{(a_1, \dots, a_k) \in \mathfrak{C}_{m}(B)} \prod_{i=1}^k \binom{n-a_{i-1}}{a_i}(n)_{a_i} \] where $a_0$ is defined as follows:
\[ a_0 = \begin{cases} 
      0 & \textrm{ if $B= \L$ } \\
      a_k & \textrm{ if $B= \C$. } \\
   \end{cases} \]
\end{theorem}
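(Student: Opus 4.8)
The plan is to partition the set of all non-attacking placements of $m$ rooks on $B$ according to the composition $(a_1,\dots,a_k)$ it induces, in the sense of Definition~\ref{def:comp}; by definition this partition is indexed by $\mathfrak{C}_m(B)$. Thus it suffices to show that, for each fixed $(a_1,\dots,a_k)\in\mathfrak{C}_m(B)$, the number of non-attacking rook placements on $B$ inducing that composition is exactly $\prod_{i=1}^k\binom{n-a_{i-1}}{a_i}(n)_{a_i}$, with $a_0$ as in the statement; summing over $\mathfrak{C}_m(B)$ then yields the theorem.

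To count the placements with a fixed composition, I would encode the part of the placement on the board $B^{(i)}$ by a triple $(C_i,R_i,\sigma_i)$, where $C_i\subseteq\{1,\dots,n\}$ is the set of columns of $B^{(i)}$ occupied by a rook, $R_i$ the set of occupied rows, and $\sigma_i\colon C_i\to R_i$ the bijection carrying each occupied column to the row containing its rook; a non-attacking placement of $a_i$ rooks on $B^{(i)}$ is the same thing as such a triple with $|C_i|=|R_i|=a_i$. By Definition~\ref{def:lincirc}, rooks on two different boards attack only when the boards are consecutive, and then precisely through the constraint $C_i\cap R_{i-1}=\varnothing$; here $R_0=\varnothing$ (so $a_0=0$) in the linear case and $R_0=R_k$ (so $a_0=a_k$) in the circular case, matching the conventions in the statement. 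Rooks on non-consecutive boards never attack, and this is what lets the count factor across the boards.

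I would then enumerate the tuples $\bigl((C_i,R_i,\sigma_i)\bigr)_{i=1}^k$ by choosing, in this order: all the row-sets $R_1,\dots,R_k$, then all the column-sets $C_1,\dots,C_k$, then all the bijections $\sigma_1,\dots,\sigma_k$. The row-sets carry no mutual constraint, giving $\prod_{i=1}^k\binom{n}{a_i}$ choices. With the $R_i$ fixed, each $C_i$ must be an $a_i$-element subset of the $n-a_{i-1}$ columns avoiding $R_{i-1}$, and these choices are independent over $i$, giving $\prod_{i=1}^k\binom{n-a_{i-1}}{a_i}$; this is uniform in the two cases because, although in the circular case $C_1$ is constrained by $R_k$, the set $R_k$ has already been chosen. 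Finally each $\sigma_i$ is an arbitrary bijection between two $a_i$-element sets, contributing $\prod_{i=1}^k a_i!$. Multiplying the three products and using $\binom{n}{a_i}\,a_i!=(n)_{a_i}$ gives the desired count $\prod_{i=1}^k\binom{n-a_{i-1}}{a_i}(n)_{a_i}$ for the fixed composition.

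The only delicate point is the circular case: one must be sure that fixing all the row-sets first and only then the column-sets really does list each circular placement exactly once, despite the cyclic dependency coming from $B^{(0)}\equiv B^{(k)}$. I expect that to be the main (and essentially the only) obstacle; in the linear case the same argument works with the cyclic constraint removed, and there it can also be phrased as placing rooks board by board from $B^{(1)}$ onward, the $a_{i-1}$ columns of $B^{(i)}$ blocked by $B^{(i-1)}$ accounting for the factor $\binom{n-a_{i-1}}{a_i}$.
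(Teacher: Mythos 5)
Your proposal is correct and takes essentially the same approach as the paper: the paper likewise fixes a composition, observes that the only cross-board constraint is that the occupied columns of $B^{(i)}$ must avoid the occupied rows of $B^{(i-1)}$, and in the circular case defers the choice of $B^{(1)}$'s columns until after the rows of $B^{(k)}$ are determined --- which is precisely your ``all rows first, then all columns'' ordering. The ``delicate point'' you flag in the circular case is not actually an obstacle: since the number of admissible column-sets $C_i$ depends only on the cardinality $a_{i-1}$ of $R_{i-1}$ (not on which rows were chosen), the multiplication principle applies and each placement corresponds to exactly one tuple $\bigl(C_i,R_i,\sigma_i\bigr)_{i=1}^k$.
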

\begin{proof}
Consider $\L$.
Fix a composition $(a_1,\ldots,a_k)\in \mathfrak{C}^{-}_{m,n,k}$ 
The number of ways to place $a_{1}$ rooks on board $B^{(1)}$ is $\binom{n}{a_1}(n)_{a_{1}}$, as discussed in the introduction.
Once we have placed $a_{1}$ rooks on $B^{(1)}$, we must then place $a_{2}$ rooks on $B^{(2)}$.  Observe that we  have $n-a_{1}$ columns in which to place $a_{2}$ rooks on $B^{(2)}$, since by Lemma~\ref{lem:compositions}, $a_1+a_2\leq n$. 
The number of ways to choose these $a_2$ columns 
from $n - a_{1}$ allowable columns is $\binom{n-a_{1}}{a_{2}}$.  Once the columns are chosen, there are $(n)_{a_2}$ ways to place the $a_2$ rooks on this board.  
Similarly, the $a_{i-1}$ rooks placed on $B^{(i-1)}$ determine the $n-a_{i-1}$ allowable columns in which the $a_i$ rooks for board $B^{(i)}$ may be placed, so there are $ \binom{n-a_{i-1}}{a_i} (n)_{a_i}$ ways to place $a_i$ rooks on $B^{(i)}$ for $2\leq i\leq k$. Thus the desired enumeration formula holds in this case.

In the case $\C$, begin by choosing the rows in which to place the $a_1$ rooks on $B^{(1)}$; this can be done in $(n)_{a_1}$ ways. Then by the same reasoning as in the linear case, there are $\binom{n-a_{i-1}}{a_i} (n)_{a_i}$ ways to place $a_i$ rooks on $B^{(i)}$ for $2\leq i\leq k$. Finally, we determine the columns in which the $a_1$ rooks on $B^{(1)}$ are to be placed. Since the rows of $B^{(k)}$ are attacking with corresponding columns of $B^{(1)}$, there are only $n-a_k$ columns on which the $a_1$ rooks may be placed, resulting in $\binom{n-a_{k}}{a_1}$ ways to choose these columns.
Thus, the total number of ways to place $m$ rooks given our chosen composition $(a_1,\ldots,a_k)$ is $\prod_{i=1}^k \binom{n-a_{i-1}}{a_i}(n)_{a_i}$. 
Summing over all compositions in $\mathfrak{C}_m(\C)$, we obtain our desired result.
\end{proof}

In the next two subsections, we investigate the maximum number of rooks we may place on $\L$ and $\C$. Once we determine this, we will use Theorem~\ref{thm:main} to find the number of non-attacking placements of these rooks. 

\begin{definition}
Let a \emph{maximum rook placement} be a non-attacking placement of the maximum number of non-attacking rooks on $\L$ or $\C$. (Note this differs from the notion of a \emph{maximal} rook placement, since there exist placements of non-attacking rooks to which no additional rooks may be added while maintaining the non-attacking property that do not achieve the maximum number of rooks for that board. The difference is illustrated in Figures~\ref{fig:k3n5circrooks} and \ref{fig:k3n5circ141}.)
\end{definition}

We start with the linear case.

\subsection{Enumeration of maximum rook placements in the linear case}

\begin{lemma}
\label{prop:maxodd}
The maximum number of non-attacking rooks that may be placed on $\L$ is $n \left\lceil\frac{k}{2}\right\rceil$.
Moreover, the compositions in $\mathfrak{C}^{-}_{n\lceil\frac{k}{2}\rceil,n,k}$ are the following:
\begin{itemize}
\item {Case $k$ even:} 
$\left(n-j_1,j_1,n-j_2,j_2,\ldots,n-j_{\frac{k}{2}},j_{\frac{k}{2}}\right)$, $0\leq j_1\leq j_2\leq \cdots\leq j_{\frac{k}{2}}\leq n$,
\item {Case $k$ odd:} 
$\left(n,0,n,\ldots,0,n\right)$.
\end{itemize}
\end{lemma}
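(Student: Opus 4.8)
The plan is to reduce everything to a linear optimization over the compositions characterized in Lemma~\ref{lem:compositions}. By that lemma, a non-attacking rook placement on $\L$ with composition $(a_1,\dots,a_k)$ exists exactly when the $a_i$ are nonnegative integers with $a_{i-1}+a_i\le n$ for all $1\le i\le k$ (with $a_0=0$), and such a placement uses $\sum_{i=1}^k a_i$ rooks. So the first task is to maximize $\sum_{i=1}^k a_i$ over the integer points of this region, and the second is to identify all maximizers.

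For the upper bound I would sum a parity-dependent subfamily of the constraints $a_{i-1}+a_i\le n$ chosen so that each variable $a_1,\dots,a_k$ is counted exactly once. When $k$ is even, summing the constraints for $i=2,4,\dots,k$ gives $\sum_{i=1}^k a_i=\sum_{\ell=1}^{k/2}(a_{2\ell-1}+a_{2\ell})\le \tfrac k2 n$. When $k$ is odd, summing the constraints for $i=1,3,\dots,k$ (the first being just $a_1\le n$, since $a_0=0$) gives $\sum_{i=1}^k a_i=a_1+\sum_{j=1}^{(k-1)/2}(a_{2j}+a_{2j+1})\le\tfrac{k+1}{2}n=n\lceil k/2\rceil$. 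Exhibiting the composition $(n-j_1,j_1,\dots,n-j_{k/2},j_{k/2})$ in the even case and $(n,0,n,\dots,0,n)$ in the odd case, and checking directly that each satisfies the inequalities of Lemma~\ref{lem:compositions}, shows the bound $n\lceil k/2\rceil$ is attained, which settles the value of the maximum.

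To characterize the maximizers I would use that equality in a sum of $\le$-inequalities forces each summand to be tight, and then bring in the complementary (unsummed) constraints. In the even case, tightness gives $a_{2\ell-1}+a_{2\ell}=n$ for all $\ell$; setting $j_\ell:=a_{2\ell}$ yields $a_{2\ell-1}=n-j_\ell$, the bounds $0\le j_\ell\le n$ follow from $a_{2\ell},a_{2\ell-1}\ge0$, and the remaining constraints $a_{2\ell}+a_{2\ell+1}\le n$ translate precisely into $j_\ell\le j_{\ell+1}$; conversely any such monotone tuple gives a valid composition, so the maximizers are exactly the claimed list. In the odd case, tightness gives $a_1=n$ and $a_{2j}+a_{2j+1}=n$ for all $j$, and then the constraints $a_{2j-1}+a_{2j}\le n$ propagate an alternation: $a_1=n\Rightarrow a_2=0\Rightarrow a_3=n\Rightarrow a_4=0\Rightarrow\cdots$, pinning the composition down uniquely to $(n,0,n,\dots,0,n)$.

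The one point requiring care is organizational rather than substantive: picking the correct parity-dependent subfamily of constraints so that the upper-bound sum telescopes into $\sum a_i$ with each variable used once, and then running the equality analysis cleanly, using the leftover constraints to extract monotonicity when $k$ is even and the forced alternation when $k$ is odd. Beyond that, everything is a routine verification that the exhibited compositions lie in the region described by Lemma~\ref{lem:compositions}.
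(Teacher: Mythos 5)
Your proposal is correct, and for the value of the maximum and for the even case it is essentially the paper's argument: sum the constraints $a_{i-1}+a_i\le n$ of Lemma~\ref{lem:compositions} over a parity class so each variable appears once, then use tightness plus the leftover constraints to get the monotonicity $j_\ell\le j_{\ell+1}$. The one place you genuinely diverge is the uniqueness of the maximizer when $k$ is odd. The paper argues by contradiction: it assumes some even-indexed $a_{2\ell}\neq 0$, bounds $a_{2\ell-1}+a_{2\ell}+a_{2\ell+1}\le 2n-a_{2\ell}$, and combines this with the even-case bounds on the prefix $a_1,\dots,a_{2\ell-2}$ and suffix $a_{2\ell+2},\dots,a_k$ to show the total falls short of $\frac{n(k+1)}{2}$. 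You instead observe that equality in the summed inequalities forces $a_1=n$ and $a_{2j}+a_{2j+1}=n$ for all $j$, and then the unsummed constraints propagate $a_1=n\Rightarrow a_2=0\Rightarrow a_3=n\Rightarrow\cdots$. Your tightness-plus-propagation argument is shorter and more uniform with the even case; the paper's perturbation argument gives slightly more information (a quantitative deficit of $a_{2\ell}$ whenever an even-indexed board is nonempty), but that extra information is not needed for the lemma. Both are complete proofs.
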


\begin{proof}
Let $(a_1,\ldots,a_k)$ be the composition corresponding to a placement of non-attacking rooks on $\L$.

\vspace{1ex}
\textbf{Case $k$ even:} 
Observe that by Lemma~\ref{lem:compositions}, adjacent $n \times n$ boards have at most $n$ total rooks placed on them. 
So $\displaystyle\sum_{i=1}^k a_i = \sum_{\ell=1}^{\frac{k}{2}}(a_{2\ell-1}+a_{2\ell})\leq n\frac{k}{2}$.
Thus, there are at most $\frac{n k}{2}$ rooks in a non-attacking rook placement on $\L$.

To attain this maximum, we must have $a_{2\ell-1}+a_{2\ell}=n$ for all $1\leq \ell\leq \frac{k}{2}$.
So the compositions in $\mathfrak{C}^{-}_{\frac{nk}{2},n,k}$ have the form $\left(n-j_1,j_1,n-j_2,j_2,\ldots,n-j_{\frac{k}{2}},j_{\frac{k}{2}}\right)$. To satisfy the condition of Lemma~\ref{lem:compositions}, we also need $j_{\ell}+n-j_{\ell+1}\leq n$. Therefore, $j_{\ell}\leq j_{\ell+1}$ for all $1\leq\ell\leq\frac{k}{2}-1$. Thus, $0\leq j_1\leq j_2\leq \cdots\leq j_{\frac{k}{2}}\leq n$.

\vspace{1ex}
\textbf{Case $k$ odd:}
Again, by Lemma~\ref{lem:compositions}, adjacent $n \times n$ boards have at most $n$ total rooks placed on them.
So $\displaystyle\sum_{i=1}^k a_i = a_1 + \sum_{\ell=1}^{\frac{k-1}{2}}(a_{2\ell}+a_{2\ell+1})\leq  n+n\frac{k-1}{2} = n \left\lceil\frac{k}{2}\right\rceil$.
Thus, there are at most $n \left\lceil\frac{k}{2}\right\rceil$ rooks in a non-attacking rook placement on $\L$.

 We construct a non-attacking rook placement on $\L$ with exactly $n \left\lceil\frac{k}{2}\right\rceil$ rooks as follows. Place $n$ non-attacking rooks on each of  $B^{(2\ell-1)}$ for $1\leq\ell\leq \frac{k+1}{2}$. Such a placement has composition $\left(n,0,n,\ldots,0,n\right)$. 

We now show that this is the only way to place $\frac{n(k+1)}{2}$ non-attacking rooks on the board. 
When $k=1$, this is clear.  
Now consider a non-attacking rook placement on $\L$ with $k>1$.
Suppose  $a_{2\ell}\neq 0$.  
Then we have $n-a_{2\ell}$ rows in which to place a rook on $B^{(2\ell-1)}$ and $n-a_{2\ell}$ columns in which to place a rook on $B^{(2\ell+1)}$.
So $a_{2\ell-1}+a_{2\ell}+a_{2\ell+1}\leq 2(n-a_{2\ell})+a_{2\ell}=2n-a_{2\ell}$.  
By the even case, we know that
$\displaystyle\sum_{j=1}^{2\ell-2}a_j\leq \frac{n (2\ell-2)}{2}$, and similarly, $\displaystyle\sum_{j=2\ell+2}^{k}a_j\leq\frac{n(k-(2\ell+2)+1)}{2}=\frac{n(k-2\ell-1)}{2}$.  So there may be at most $\displaystyle\frac{n (2\ell-2)}{2} + 2n-a_{2\ell} + \displaystyle\frac{n(k-2\ell-1)}{2}  =   \displaystyle\frac{n(k+1)}{2} -a_{2\ell} < \displaystyle\frac{n(k+1)}{2}$
non-attacking rooks placed on $\L$. 
Thus, if $a_{2\ell}\neq 0$, we do not obtain a maximum rook placement.  Therefore, the only way we may obtain a maximum rook placement is by placing $n$ rooks on each of $B^{(2\ell-1)}$ for $1\leq \ell\leq \frac{k+1}{2}$ and zero rooks on the remaining boards. 
\end{proof}

We now state and prove our second main result, which enumerates the  number of maximum rook placements on $\L$; see Figures~\ref{fig:k3n5linrooks} and \ref{fig:k4n5linrooks} for examples.

\begin{figure}[htbp]
\begin{center}
\includegraphics[scale=.6]{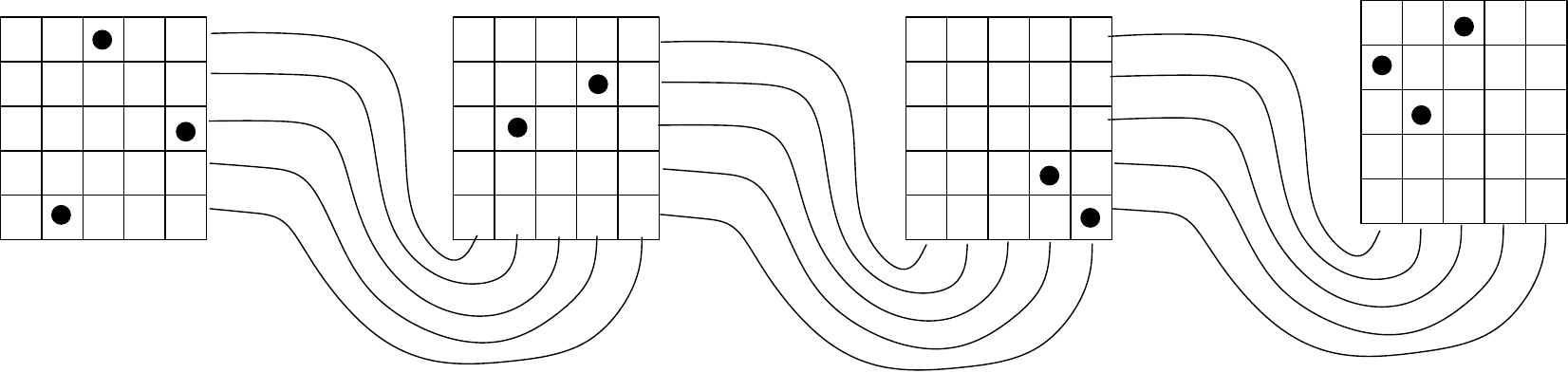}
\end{center}
\caption{A maximum rook placement on a $k$ even linear board.}
\label{fig:k4n5linrooks}
\end{figure}

\begin{figure}[htbp]
\begin{center}
\includegraphics[scale=.5]{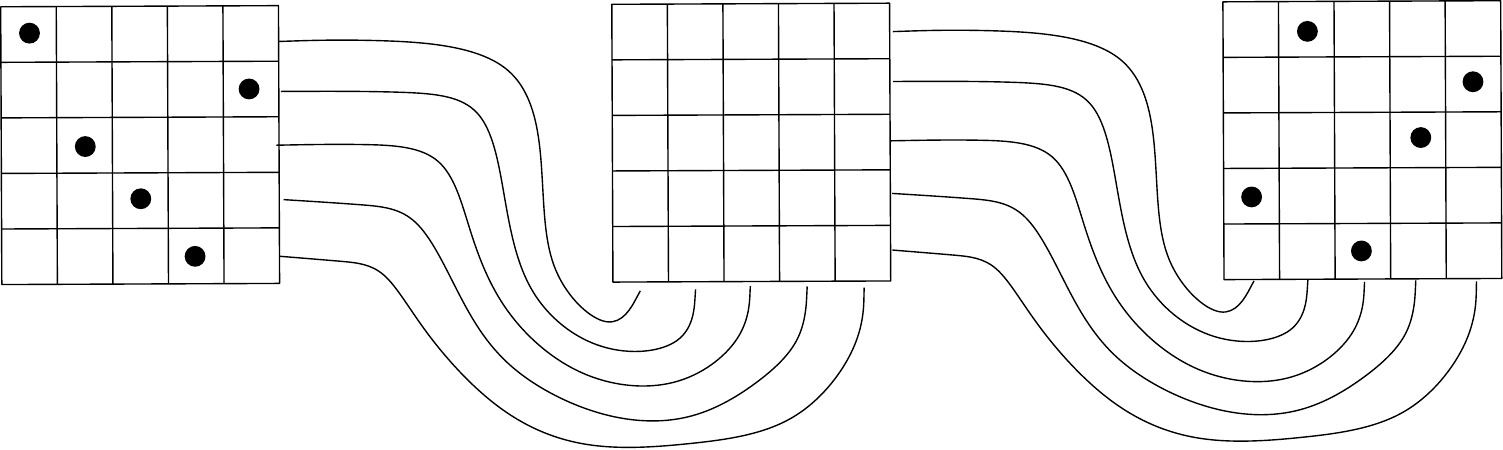}
\end{center}
\caption{A maximum rook placement on a $k$ odd  linear board.}
\label{fig:k3n5linrooks}
\end{figure}

\begin{theorem}
\label{thm:max_k_odd_lin}
The number of maximum rook placements on $\L$ is given by:
\begin{itemize}
\item {Case $k$ even:} $\mbox{ }(n!)^{\frac{k}{2}} \displaystyle\sum_{0=j_0\leq j_1\leq\ldots\leq j_{\frac{k}{2}}\leq n} \mbox{ }\displaystyle\prod_{\ell=1}^{\frac{k}{2}} \binom{n-j_{\ell-1}}{n-j_{\ell}}\binom{n}{j_{\ell}}$,
\item {Case $k$ odd:} $\mbox{ }\left(n!\right)^{\frac{k+1}{2}}$.
\end{itemize}
\end{theorem}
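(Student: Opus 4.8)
The plan is to substitute the explicit list of maximum compositions from Lemma~\ref{prop:maxodd} into the general enumeration formula of Theorem~\ref{thm:main} and simplify. By Lemma~\ref{prop:maxodd}, a maximum rook placement on $\L$ uses $m = n\lceil\frac{k}{2}\rceil$ rooks, so the quantity to compute is
\[
\sum_{(a_1,\dots,a_k)\in\mathfrak{C}^{-}_{n\lceil\frac{k}{2}\rceil,n,k}}\ \prod_{i=1}^k \binom{n-a_{i-1}}{a_i}(n)_{a_i},
\]
with $a_0 = 0$ as prescribed in Theorem~\ref{thm:main}. Everything then reduces to reading off the two families of compositions from Lemma~\ref{prop:maxodd} and doing elementary bookkeeping.

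For $k$ odd, Lemma~\ref{prop:maxodd} says there is a single maximum composition, $(n,0,n,\dots,0,n)$, so $a_i = n$ for $i$ odd and $a_i = 0$ for $i$ even. For each odd index $i$ one has $a_{i-1} = 0$ (either $i = 1$, where $a_0 = 0$, or $i-1$ is even), so the factor $\binom{n-a_{i-1}}{a_i}(n)_{a_i}$ equals $\binom{n}{n}(n)_n = n!$; for each even index $i$ one has $a_{i-1} = n$, so the factor is $\binom{0}{0}(n)_0 = 1$. Since $\{1,\dots,k\}$ has $\frac{k+1}{2}$ odd and $\frac{k-1}{2}$ even elements, the (single) term of the sum equals $(n!)^{\frac{k+1}{2}}$, as claimed.

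For $k$ even, Lemma~\ref{prop:maxodd} gives the maximum compositions $(n-j_1,j_1,\dots,n-j_{k/2},j_{k/2})$ indexed by $0\leq j_1\leq\cdots\leq j_{k/2}\leq n$; equivalently $a_{2\ell-1} = n-j_\ell$ and $a_{2\ell} = j_\ell$, and setting $j_0 = 0$ makes $a_{2\ell-2} = j_{\ell-1}$ valid for all $\ell$ (using $a_0 = 0$ when $\ell = 1$). Then the product in Theorem~\ref{thm:main} factors over the $\frac{k}{2}$ pairs of consecutive boards: the odd index $2\ell-1$ contributes $\binom{n-j_{\ell-1}}{n-j_\ell}(n)_{n-j_\ell}$ and the even index $2\ell$ contributes $\binom{j_\ell}{j_\ell}(n)_{j_\ell} = (n)_{j_\ell}$. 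The one step worth isolating is the falling-factorial identity
\[
(n)_{n-j}\,(n)_j = \frac{n!}{j!}\cdot\frac{n!}{(n-j)!} = n!\binom{n}{j},
\]
which rewrites each pair's contribution as $n!\binom{n-j_{\ell-1}}{n-j_\ell}\binom{n}{j_\ell}$. Multiplying over $\ell$ and summing over all admissible chains $0 = j_0\leq j_1\leq\cdots\leq j_{k/2}\leq n$ produces exactly $(n!)^{k/2}\sum\prod_{\ell=1}^{k/2}\binom{n-j_{\ell-1}}{n-j_\ell}\binom{n}{j_\ell}$, matching the stated formula.

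There is no real obstacle here: both cases are direct substitutions into Theorem~\ref{thm:main} combined with Lemma~\ref{prop:maxodd}, the only mildly nontrivial point being the identity $(n)_{n-j}(n)_j = n!\binom{n}{j}$ used to recognize the summand in the $k$-even case. As a sanity check, the $k$-odd formula yields $n!$ when $k=1$, recovering ordinary permutations of $n$, and $(n!)^2$ when $k=3$, corresponding to independently filling boards $B^{(1)}$ and $B^{(3)}$ while $B^{(2)}$ is empty.
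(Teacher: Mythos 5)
Your proof is correct and follows essentially the same route as the paper: substitute the maximum compositions from Lemma~\ref{prop:maxodd} into Theorem~\ref{thm:main} and simplify, using $(n)_{n-j}(n)_j = n!\binom{n}{j}$ in the even case. The only difference is cosmetic — you spell out the ``algebraic manipulation'' the paper leaves implicit, and you derive the odd case from the general formula rather than by direct counting, but the content is identical.
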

\begin{proof}
{Case $k$ even:}
Recall from Lemma~\ref{prop:maxodd} the compositions in $\mathfrak{C}^{-}_{\frac{nk}{2},n,k}$ have the form \[\left(n-j_1,j_1,n-j_2,j_2,\ldots,n-j_{\frac{k}{2}},j_{\frac{k}{2}}\right)\] for some $0\leq j_1\leq j_2\leq \cdots\leq j_{\frac{k}{2}}\leq n$.

We apply Theorem~\ref{thm:main} to these compositions and see the number of maximum rook placements on $\L$ when $k$ is even is given by

\[\displaystyle\sum_{(a_1, \dots, a_k) \in \mathfrak{C}^{-}_{\frac{n k}{2},n,k}} \prod_{i=1}^k (n)_{a_i} \binom{n-a_{i-1}}{a_i} 
= 
\displaystyle\sum_{0\leq j_1\leq\ldots\leq j_{\frac{k}{2}}\leq n}\mbox{ }\prod_{\ell=1}^{\frac{k}{2}}(n)_{n-j_{\ell}}\binom{n-j_{\ell-1}}{n-j_{\ell}}(n)_{j_{\ell}}\binom{n-(n-j_{\ell})}{j_{\ell}}
\]
which, after some algebraic manipulation, yields the desired result.

\textbf{Case $k$ odd:}
By Lemma~\ref{prop:maxodd}, the only way to obtain a maximum rook placement is by placing $n$ rooks on the odd numbered boards.  There are $n!$ ways to place $n$ non-attacking rooks on one $n \times n$ board.  Since we are placing $n$ rooks on each odd numbered board, of which there are $\frac{k+1}{2}$, the total number of rook placements on $\L$ is $\left(n!\right)^{\frac{k+1}{2}}$. 
\end{proof}

\begin{remark}
We can rewrite the formula for the number of maximum rook placements on $\L$ when $k$ is even in terms of multinomial coefficients as follows:
\[(n!)^{\frac{k}{2}} \displaystyle\sum_{0\leq j_1\leq\ldots\leq j_{\frac{k}{2}}\leq n} \displaystyle\binom{n}{n-j_{\frac{k}{2}},j_{\frac{k}{2}}-j_{\frac{k}{2}-1},\ldots,j_2-j_1,j_1}\displaystyle\prod_{\ell=1}^{\frac{k}{2}} \binom{n}{j_{\ell}}.\]
\end{remark}

\subsection{Enumeration of maximum rook placements in the circular case}
We now investigate the circular case. We begin by determining the maximum number of rooks one may place on $\C$.  
\begin{lemma}
\label{prop:maxeven}
The maximum number of rooks that one may place on $\C$ 
is $\left\lfloor\frac{nk}{2}\right\rfloor$. 
Moreover, the compositions in $\mathfrak{C}^{\circ}_{\lfloor\frac{nk}{2}\rfloor,n,k}$ are the following:
\begin{itemize}
\item {Case $k$ even:} $(n-j,j,n-j,j,\cdots ,n-j,j)$, $0\leq j\leq n$,
\item {Case $k$ odd, $n$ even:} $\left(\frac{n}{2},\frac{n}{2},\ldots,\frac{n}{2}\right)$, 
\item {Case $k$ odd, $n$ odd:} Cyclic shifts of  $\left(\frac{n-1}{2},\frac{n+1}{2},\frac{n-1}{2},\ldots, \frac{n+1}{2},\frac{n-1}{2}\right)$.
\end{itemize}
\end{lemma}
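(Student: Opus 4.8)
The plan is to mirror the structure of the proof of Lemma~\ref{prop:maxodd}, using Lemma~\ref{lem:compositions} to bound $\sum a_i$ and then classifying exactly which compositions achieve the bound. For the upper bound, recall that in the circular case Lemma~\ref{lem:compositions} gives $a_{i-1}+a_i\leq n$ for \emph{all} $1\leq i\leq k$ with $a_0=a_k$, so the constraints form a cycle rather than a path. When $k$ is even, I would pair up the $k$ consecutive-pair inequalities $a_1+a_2\leq n$, $a_3+a_4\leq n$, \dots, $a_{k-1}+a_k\leq n$ to get $\sum_{i=1}^k a_i\leq n\frac{k}{2}$, exactly as in the linear even case; equality forces $a_{2\ell-1}+a_{2\ell}=n$ for each $\ell$. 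The extra cyclic constraints $a_{2\ell}+a_{2\ell+1}\leq n$ (indices mod $k$) then force, writing $a_{2\ell-1}=n-j_\ell$ and $a_{2\ell}=j_\ell$, the inequalities $j_\ell+(n-j_{\ell+1})\leq n$, i.e.\ $j_\ell\leq j_{\ell+1}$ cyclically for all $\ell$ including $j_{k/2}\leq j_1$; a cyclic chain of $\leq$'s forces all $j_\ell$ equal to a common value $j$, giving the composition $(n-j,j,\dots,n-j,j)$. That the bound is attained for each $0\le j\le n$ follows from the construction in Lemma~\ref{lem:compositions}.

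When $k$ is odd, summing all $k$ cyclic inequalities $a_{i-1}+a_i\leq n$ counts each $a_i$ exactly twice, so $2\sum_{i=1}^k a_i\leq nk$, hence $\sum a_i\leq \lfloor nk/2\rfloor$. The construction from Lemma~\ref{lem:compositions} with the appropriate composition shows this is attained: for $n$ even take $a_i=n/2$ for all $i$ (then $a_{i-1}+a_i=n$ and $\sum a_i = nk/2$); for $n$ odd take the alternating pattern $\frac{n-1}{2},\frac{n+1}{2},\dots$ around the cycle — since $k$ is odd this pattern must have two equal adjacent entries somewhere, and the sum of any such arrangement of $\lceil k/2\rceil$ copies of $\frac{n+1}{2}$ and $\lfloor k/2\rfloor$ copies of $\frac{n-1}{2}$ (or vice versa) equals $\frac{nk-1}{2}=\lfloor nk/2\rfloor$. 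I still need the converse: that these are the \emph{only} maximizing compositions. For $k$ odd and $n$ even, equality in $2\sum a_i\leq nk$ forces $a_{i-1}+a_i=n$ for every $i$; going around the odd cycle this forces all $a_i$ equal, hence $a_i=n/2$. For $k$ odd and $n$ odd, $\sum a_i=\frac{nk-1}{2}$ means exactly one of the $k$ cyclic inequalities $a_{i-1}+a_i\leq n$ is slack by exactly $1$ and the rest are tight; a tight pair $a_{i-1}+a_i=n$ with $n$ odd means the two entries differ, one being $\frac{n-1}{2}$-or-less-paired-with-its-complement — more precisely, chasing the equalities $a_{i}=n-a_{i-1}$ around the arc of tight constraints forces the alternating values $\frac{n-1}{2},\frac{n+1}{2},\frac{n-1}{2},\dots$, and the single slack spot (where two equal values $\frac{n-1}{2}$ or two equal values $\frac{n+1}{2}$ meet) can occur at any of the $k$ positions, which is exactly the set of cyclic shifts claimed. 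One should double-check that the slack pair must be of the form $(\frac{n-1}{2},\frac{n-1}{2})$ and not $(\frac{n+1}{2},\frac{n+1}{2})$, or record both as cyclic shifts of the same necklace — indeed $(\frac{n-1}{2},\frac{n+1}{2},\dots,\frac{n+1}{2},\frac{n-1}{2})$ read starting from different positions yields both.

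The main obstacle I anticipate is the converse direction in the $k$ odd, $n$ odd case: carefully arguing that a sum one below the naive ceiling forces precisely the alternating necklace with a single "defect", and correctly identifying the orbit of cyclic shifts (and checking its size is $k$, consistent with the count $k\lceil n/2\rceil(\cdots)$ appearing later in Theorem~\ref{thm:max_k_odd_circ}). The bound and the constructions are routine; the delicate bookkeeping is in the equality analysis of the cyclic system of inequalities, especially tracking which single inequality is allowed to be slack and propagating the forced values $a_i = n - a_{i-1}$ around the cycle.
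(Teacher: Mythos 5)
Your argument is correct and reaches the same classification, but in the hardest case ($k$ odd, $n$ odd) it takes a genuinely different route from the paper. The paper rules out $a_i<\frac{n-1}{2}$ by deleting board $B^{(i)}$ and invoking the linear bound of Lemma~\ref{prop:maxodd} on the resulting $B^{-}_{n,k-1}$, rules out $a_i>\frac{n+1}{2}$ by adjacency, and then finishes with an independent-set argument on the $k$-cycle (no two adjacent boards may both carry $\frac{n+1}{2}$, and $k$ odd caps the count at $\frac{k-1}{2}$). You instead do exact slack accounting: $\sum_{i=1}^{k}(a_{i-1}+a_i)=2\sum a_i=nk-1$ forces exactly one cyclic constraint to be slack by $1$ and the rest tight, and propagating $a_i=n-a_{i-1}$ around the tight arc pins down $a_1=\frac{n-1}{2}$ at the defect and the alternating values elsewhere. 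Your version is more self-contained (no appeal to the linear lemma) and determines the composition in one pass; the paper's version is softer but generalizes the "delete a board" idea it already used elsewhere. Your treatments of the $k$ even and $k$ odd, $n$ even cases match the paper's in substance (you pair up alternate constraints and then close the cyclic chain $j_1\leq\cdots\leq j_{k/2}\leq j_1$, where the paper tightens all pairs at once; same content). One small slip to fix in your existence paragraph: an arrangement with $\lceil k/2\rceil$ copies of $\frac{n+1}{2}$ and $\lfloor k/2\rfloor$ copies of $\frac{n-1}{2}$ sums to $\frac{nk+1}{2}$, which exceeds the bound; only the "vice versa" option ($\frac{k+1}{2}$ copies of $\frac{n-1}{2}$) is realizable, as your own slack analysis later confirms. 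With that parenthetical deleted, the proof is complete.
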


\begin{proof}
Let $(a_1,\ldots,a_k)$ be the composition corresponding to a placement of non-attacking rooks on $\C$. Also, set $a_0=a_k$.

\vspace{1ex}
\textbf{Case $k$ even:} 
Observe that by Lemma~\ref{lem:compositions}, adjacent $n \times n$ boards have at most $n$ total rooks placed on them.  
So $\displaystyle\sum_{i=1}^k a_i = \frac{1}{2} \sum_{i=1}^{k}(a_{i-1}+a_{i})\leq \frac{1}{2} n k$.
Thus, there are at most $\frac{n k}{2}$ rooks in a non-attacking rook placement on $\C$.

To attain this maximum, we must have $a_{i-1}+a_{i}=n$ for all $1\leq i\leq k$.
So the compositions in $\mathfrak{C}^{\circ}_{\frac{nk}{2},n,k}$ have the form $\left(n-j,j,n-j,j,\ldots,n-j,j\right)$. 

\vspace{1ex}
\textbf{Case $k$ odd, $n$ even:}  
Again, by Lemma~\ref{lem:compositions}, adjacent $n \times n$ boards have at most $n$ total rooks placed on them. 
So $\displaystyle\sum_{i=1}^k a_i = \frac{1}{2} \sum_{i=1}^{k}(a_{i-1}+a_{i})\leq \frac{1}{2} n k$.
Thus, there are at most $\frac{n k}{2}$ rooks in a non-attacking rook placement on $\C$.

To attain this maximum, we must have $a_{i-1}+a_{i}=n$ for all $1\leq i\leq k$.
So the compositions in $\mathfrak{C}^{\circ}_{\frac{nk}{2},n,k}$ have the form $\left(n-j,j,n-j,j,\ldots, j, n-j \right)$. But then $a_1+a_k=2n-2j$, which must also equal $n$. So $n=2j$, that is, our composition is $\left(\frac{n}{2},\frac{n}{2},\ldots,\frac{n}{2}\right)$.

\vspace{1ex}
\textbf{Case $k$ odd, $n$ odd:} 
Again, by Lemma~\ref{lem:compositions}, adjacent $n \times n$ boards have at most $n$ total rooks placed on them. 
So $\displaystyle\sum_{i=1}^k a_i = \frac{1}{2} \sum_{i=1}^{k}(a_{i-1}+a_{i})\leq \frac{1}{2} n k$.
Thus, there are at most $\left\lfloor\frac{n k}{2}\right\rfloor$ rooks in a non-attacking rook placement on $\C$.

$\left(\frac{n-1}{2},\frac{n+1}{2}, \frac{n-1}{2},\cdots,\frac{n+1}{2},\frac{n-1}{2}\right)$ is a composition of $\left\lfloor\frac{nk}{2}\right\rfloor$ that satisfies the condition of Lemma~\ref{lem:compositions}.
We claim cyclic shifts of $\left(\frac{n-1}{2},\frac{n+1}{2}, \frac{n-1}{2},\cdots,\frac{n+1}{2},\frac{n-1}{2}\right)$ are the only compositions that result in a maximum rook placement.

Suppose some $a_i < \frac{n-1}{2}$. Consider the linear board  $\B^{-}_{n,k-1}$
obtained by removing $B^{(i)}$ from $\C$. By Lemma~\ref{prop:maxodd}, there can be at most $\frac{n(k-1)}{2}$ rooks on  $\B^{-}_{n,k-1}$. So there can be at most $\frac{n(k-1)}{2} + a_i$ rooks on $\C$. However, $\frac{n(k-1)}{2} + a_i < \frac{n(k-1)}{2} + \frac{n-1}{2}
=\left\lfloor\frac{nk}{2}\right\rfloor$, and so this configuration cannot have a maximum rook placement. 

Suppose some $a_i > \frac{n+1}{2}$. Since adjacent boards can have a total of at most $n$ rooks between them, $a_{i-1} \leq n-a_i < \frac{n-1}{2}$. We may then apply the previous case to see that this configuration cannot have a maximum rook placement. 

Therefore, a maximum rook placement on $\C$ with $k$ odd and $n$ odd must have boards with either $\frac{n-1}{2}$ rooks or $\frac{n+1}{2}$ rooks placed on them. Note that we cannot have two adjacent boards each with $\frac{n+1}{2}$ rooks on them. Therefore, because $k$ is odd, we can have at most $\frac{k-1}{2}$ boards with $\frac{n+1}{2}$ rooks. By placing $\frac{n+1}{2}$ rooks on boards such that no two of these boards are adjacent and placing $\frac{n-1}{2}$ rooks on the remaining boards, we get a maximum rook placement and thus verify the claim.
\end{proof}

We now state and prove our third main enumerative result; see Figures~\ref{fig:k6n4circrooks}, \ref{fig:k3n4circrooks}, and \ref{fig:k3n5circrooks} for examples.

\begin{figure}[htbp]
\begin{center}
\includegraphics[scale=.9]{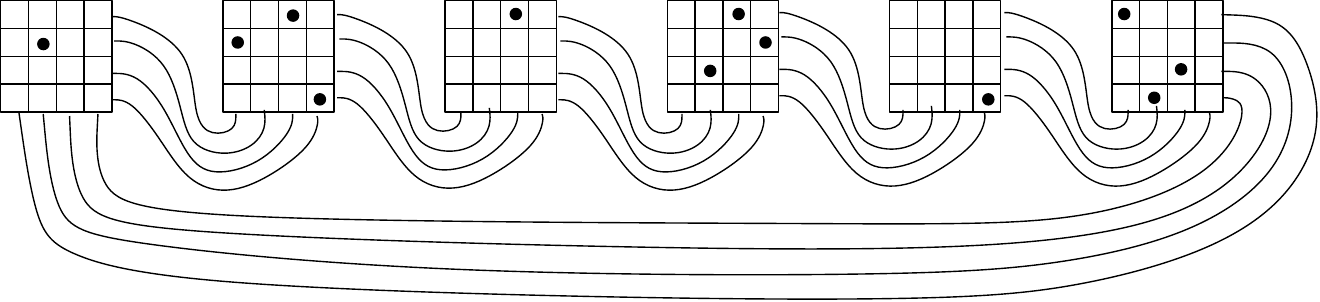}
\end{center}
\caption{A maximum rook placement on a  $k$ even circular board.}
\label{fig:k6n4circrooks}
\end{figure}

\begin{figure}[htbp]
\begin{center}
\includegraphics[scale=.6]{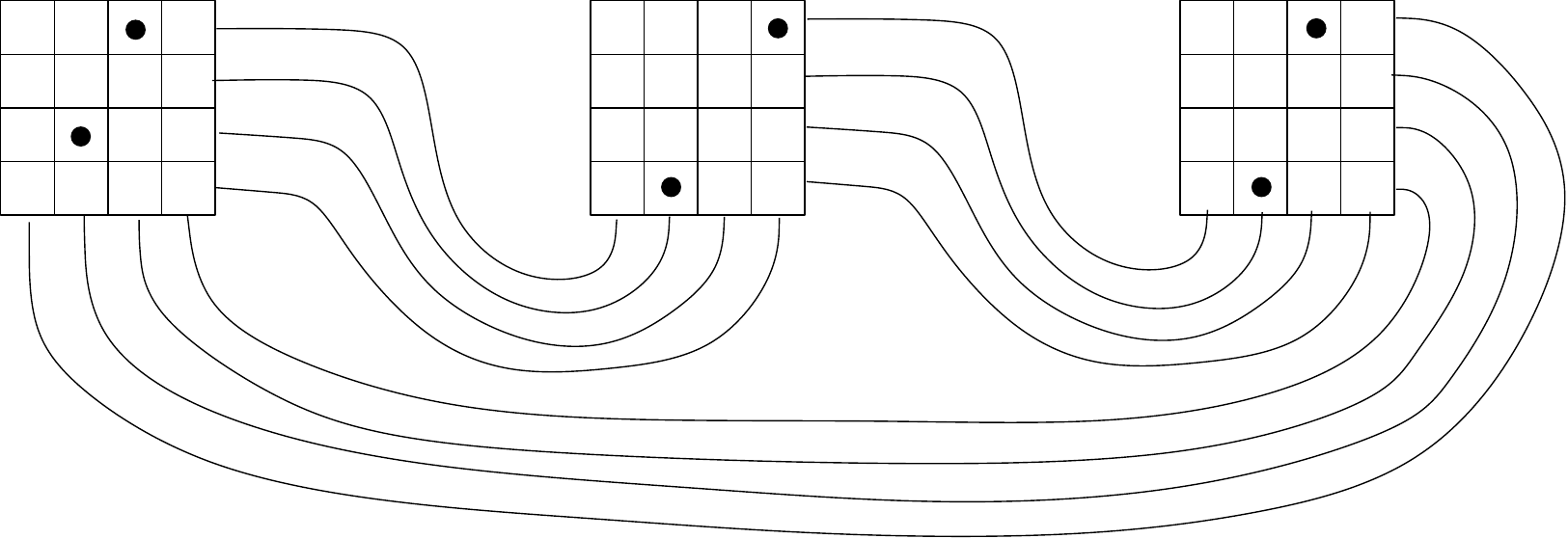}
\end{center}
\caption{A maximum rook placement on a $k$ odd and $n$ even circular board.}
\label{fig:k3n4circrooks}
\end{figure}

\begin{theorem}
\label{thm:max_k_odd_circ}
The number of maximum rook placements on $\C$ is given by:
\begin{itemize}

\item {Case $k$ even:}
$\mbox{ }
\left(n!\right)^{\frac{k}{2}}\displaystyle\sum_{j=0}^n \binom{n}{j}^{\frac{k}{2}}$,
\item {Case $k$ odd, $n$ even:} $
\mbox{ }\displaystyle \left(\left(n\right)_{\frac{n}{2}}\right)^k$,
\item {Case $k$ odd, $n$ odd:} 
$\mbox{ }k
\lceil\frac{n}{2}\rceil
\left(\left(n\right)_{\lceil\frac{n}{2}\rceil}\right)^{\lfloor\frac{k}{2}\rfloor}\left(\left(n\right)_{\lfloor\frac{n}{2}\rfloor}\right)^{\lceil\frac{k}{2}\rceil}$.
\end{itemize}
\end{theorem}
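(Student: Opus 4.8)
\textbf{Proof plan for Theorem~\ref{thm:max_k_odd_circ}.} The strategy is to combine Lemma~\ref{prop:maxeven}, which identifies the compositions achieving a maximum rook placement on $\C$, with Theorem~\ref{thm:main}, which counts placements realizing a given composition. I treat the three parity cases separately.

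\emph{Case $k$ even.} By Lemma~\ref{prop:maxeven}, the maximum compositions are exactly $(n-j,j,n-j,j,\ldots,n-j,j)$ for $0\le j\le n$. Plugging such a composition into the product $\prod_{i=1}^k \binom{n-a_{i-1}}{a_i}(n)_{a_i}$ from Theorem~\ref{thm:main} (with the cyclic convention $a_0=a_k=j$), every factor is either of the form $\binom{n-j}{n-j}(n)_{n-j}$ or $\binom{n-(n-j)}{j}(n)_j=\binom{j}{j}(n)_j$; the binomials are all $1$, so the product collapses to $\big((n)_{n-j}(n)_j\big)^{k/2}$. The key algebraic identity is $(n)_{n-j}(n)_j = n!\binom{n}{j}$, which I verify by writing $(n)_j = n!/(n-j)!$ and $(n)_{n-j}=n!/j!$. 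Summing over $j$ then gives $\sum_{j=0}^n \big(n!\binom{n}{j}\big)^{k/2} = (n!)^{k/2}\sum_{j=0}^n \binom{n}{j}^{k/2}$, as claimed.

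\emph{Case $k$ odd, $n$ even.} Here Lemma~\ref{prop:maxeven} gives the single composition $(\tfrac n2,\tfrac n2,\ldots,\tfrac n2)$. Every factor in Theorem~\ref{thm:main} is $\binom{n-n/2}{n/2}(n)_{n/2}=\binom{n/2}{n/2}(n)_{n/2}=(n)_{n/2}$, and there are $k$ such factors, giving $\big((n)_{n/2}\big)^k$ directly.

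\emph{Case $k$ odd, $n$ odd.} By Lemma~\ref{prop:maxeven} the maximum compositions are the cyclic shifts of $(\tfrac{n-1}2,\tfrac{n+1}2,\tfrac{n-1}2,\ldots,\tfrac{n+1}2,\tfrac{n-1}2)$, which has $\lceil k/2\rceil$ entries equal to $\tfrac{n-1}2=\lfloor n/2\rfloor$ and $\lfloor k/2\rfloor$ entries equal to $\tfrac{n+1}2=\lceil n/2\rceil$, with no two of the larger entries adjacent. I first count the placements for one such composition via Theorem~\ref{thm:main}: each entry $a_i=\lfloor n/2\rfloor$ that is preceded by $a_{i-1}=\lceil n/2\rceil$ contributes $\binom{n-\lceil n/2\rceil}{\lfloor n/2\rfloor}(n)_{\lfloor n/2\rfloor}=\binom{\lfloor n/2\rfloor}{\lfloor n/2\rfloor}(n)_{\lfloor n/2\rfloor}=(n)_{\lfloor n/2\rfloor}$; the one entry $a_i=\lfloor n/2\rfloor$ preceded by another $\lfloor n/2\rfloor$ (these two adjacent small entries exist because $k$ is odd) contributes $\binom{n-\lfloor n/2\rfloor}{\lfloor n/2\rfloor}(n)_{\lfloor n/2\rfloor}=\binom{\lceil n/2\rceil}{\lfloor n/2\rfloor}(n)_{\lfloor n/2\rfloor}=\lceil n/2\rceil\,(n)_{\lfloor n/2\rfloor}$, using $\binom{\lceil n/2\rceil}{\lfloor n/2\rfloor}=\lceil n/2\rceil$ for $n$ odd; and each entry $a_i=\lceil n/2\rceil$ is preceded by $\lfloor n/2\rfloor$, contributing $\binom{n-\lfloor n/2\rfloor}{\lceil n/2\rceil}(n)_{\lceil n/2\rceil}=\binom{\lceil n/2\rceil}{\lceil n/2\rceil}(n)_{\lceil n/2\rceil}=(n)_{\lceil n/2\rceil}$. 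Multiplying, one composition yields $\lceil n/2\rceil\big((n)_{\lfloor n/2\rfloor}\big)^{\lceil k/2\rceil}\big((n)_{\lceil n/2\rceil}\big)^{\lfloor k/2\rfloor}$. Finally I must count the distinct cyclic shifts: since the base composition has period $k$ (its entries are not all equal as $n$ is odd, and in fact the pattern of $\lfloor n/2\rfloor$ versus $\lceil n/2\rceil$ positions is aperiodic because $\lfloor k/2\rfloor$ and $\lceil k/2\rceil$ are coprime-ish in the relevant sense — more carefully, the two adjacent small entries occur at a unique location, pinning the shift), there are exactly $k$ distinct shifts. Multiplying by $k$ gives the stated formula.

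\textbf{Main obstacle.} The routine parts are the binomial-collapse computations; the one genuinely delicate point is the last step of the odd/odd case, namely justifying that there are exactly $k$ maximum compositions rather than fewer. I would argue this by locating the unique adjacent pair of equal small entries $\tfrac{n-1}2$: in the base composition read cyclically, consecutive equal values $\tfrac{n-1}2,\tfrac{n-1}2$ occur in exactly one spot (all other adjacencies alternate between $\tfrac{n-1}2$ and $\tfrac{n+1}2$), so any cyclic shift is determined by where that distinguished pair sits, giving $k$ choices, all distinct. A small verification that the placement count is the same for every shift (it is, since the multiset of factors in Theorem~\ref{thm:main} is cyclically invariant) completes the argument.
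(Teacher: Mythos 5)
Your proposal is correct and follows essentially the same route as the paper: identify the maximum compositions via Lemma~\ref{prop:maxeven}, then apply Theorem~\ref{thm:main} and simplify. In fact your treatment of the odd/odd case is more explicit than the paper's (which simply asserts the result of "applying Theorem~\ref{thm:main}"): you correctly isolate the unique cyclically adjacent pair of $\frac{n-1}{2}$ entries as the source of the factor $\lceil\frac{n}{2}\rceil$ and use that same pair to justify that the $k$ cyclic shifts are pairwise distinct.
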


\begin{proof}
\textbf{Case $k$ even:}
Recall from Lemma~\ref{prop:maxeven} that the compositions in $\mathfrak{C}^{\circ}_{\frac{nk}{2},n,k}$ for $k$ even are $(n-j,j,n-j,j,\cdots ,n-j,j)$, $0\leq j\leq n$. 

We apply the formula of Theorem~\ref{thm:main} to these compositions, along with some algebraic manipulation, and see the number of maximum rook placements on $\C$ when $k$ is even is \[\displaystyle\sum_{j=0}^n \left(\left(n\right)_{n-j}\right)^{\frac{k}{2}}\left(\left(n\right)_j\right)^{\frac{k}{2}}=
\displaystyle\sum_{j=0}^n \left(\frac{n!}{j!}\frac{n!}{(n-j)!}\right)^{\frac{k}{2}}=\left(n!\right)^{\frac{k}{2}}\sum_{j=0}^n \binom{n}{j}^{\frac{k}{2}}.\]

\textbf{Case $k$ odd, $n$ even:} 
As discussed in Lemma~\ref{prop:maxeven}, the only composition in $\mathfrak{C}^{\circ}_{\frac{nk-1}{2},n,k}$  
is $\left(\frac{n}{2},\frac{n}{2},\frac{n}{2},\cdots ,\frac{n}{2}\right)$. Now applying the formula in Theorem~\ref{thm:main}, we find the  number of maximum rook placements is \[\sum_{(a_1, \dots, a_k) \in \mathfrak{C}_{m,n,k}} \prod_{i=1}^k (n)_{a_i} \binom{n-a_{i-1}}{a_i}= \left( (n)_{\frac{n}{2}}\binom{n-\frac{n}{2}}{\frac{n}{2}}\right)^k = \left(\left(n\right)_{\frac{n}{2}}\right)^k.\]

\textbf{Case $k$ odd, $n$ odd:} 
As discussed in Lemma~\ref{prop:maxeven}, each composition in $\mathfrak{C}^{\circ}_{\lfloor\frac{nk}{2}\rfloor,n,k}$ for $k$ odd is a cyclic shift of $(\frac{n-1}{2},\frac{n+1}{2},\frac{n-1}{2}, \frac{n+1}{2}, \ldots, \frac{n+1}{2}, \frac{n-1}{2})$. 
Now there are $\frac{k-1}{2}$ entries equal to $\frac{n+1}{2}$ in such a composition and 
$\frac{k+1}{2}$ entries equal to $\frac{n-1}{2}$. 
There are $k$ such compositions, so applying Theorem~\ref{thm:main}, we obtain $ k\lceil\frac{n}{2}\rceil\left(\left(n\right)_{\frac{n+1}{2}}\right)^{\frac{k-1}{2}}\left(\left(n\right)_{\frac{n-1}{2}}\right)^{\frac{k+1}{2}}$.
This completes the proof.
\end{proof}

\begin{remark}
We can write the sum in the $k$ even case of Theorem~\ref{thm:max_k_odd_circ} above as a generalized hypergeometric function, yielding
\[\left(n!\right)^{\frac{k}{2}}\sum_{j=0}^n \binom{n}{j}^{\frac{k}{2}}=\left(n!\right)^{\frac{k}{2}} \mbox{ } { }_{\frac{k}{2}}\mathrm{F}_{\frac{k}{2}-1}\left(-n,\ldots,-n; 1,\ldots,1; \left(-1\right)^{\frac{k}{2}+1}\right)\]
where ${ }_p \mathrm{F}_q \left(a_1,\ldots,a_p; b_1,\ldots,b_q;z\right):=\displaystyle\sum_{i=0}^{\infty}\displaystyle\frac{(a_1)^{(i)}(a_2)^{(i)}\cdots(a_p)^{(i)}}{(b_1)^{(i)}(b_2)^{(i)}\cdots(b_q)^{(i)}}\displaystyle\frac{z^i}{i!}$ and we use $(a)^{(i)}$ to denote the rising factorial $a(a+1)(a+2)\cdots (a+i-1)$. 

For $k=2$, ${ }_{\frac{k}{2}}\mathrm{F}_{\frac{k}{2}-1}\left(-n,\ldots,-n; 1,\ldots,1; \left(-1\right)^{\frac{k}{2}+1}\right)$ reduces to $2^n$, and for $k=4$ it reduces to $\binom{2n}{n}$. But for $k=6$ and greater, there is no closed form expression, so this formula is the best possible~\cite{Petkovsek}. See~\cite{wolfram} for more on these generalized hypergeometric functions.
\end{remark}

\smallskip
\begin{remark}
\label{rem:skew}
The linear board $\L$ is equivalent to a certain skew partition shape inside an $n \left\lceil\frac{k+1}{2}\right\rceil \times n \left\lceil\frac{k}{2}\right\rceil $ chessboard. For $k$ even, $\L$ is equivalent to the subboard of skew partition shape $\left(\frac{nk}{2}\right)^{2n}\left(\frac{n(k-2)}{2}\right)^n\cdots(2n)^n n^n / \left(\frac{n(k-2)}{2}\right)^n\left(\frac{n(k-4)}{2}\right)^n\cdots(2n)^n n^n$. For $k$ odd, $\L$ is equivalent to the subboard of skew partition shape $\left(\frac{n(k+1)}{2}\right)^{2n}\left(\frac{n(k-1)}{2}\right)^n\cdots(2n)^n  / \left(\frac{n(k-1)}{2}\right)^n\left(\frac{n(k-3)}{2}\right)^n\cdots(2n)^n n^n$. 
The circular board $\C$ for $k$ even and greater than $2$ is equivalent to the union of the corresponding skew partition shape for $B_{n,k-1}^{-}$ and $n^n$ in the upper left corner. 
So the enumerations of maximum rook placements in these cases could be computed, alternatively, by the theory of rook polynomials, rather than the direct combinatorial arguments given in this paper. 
\end{remark}

\section{Chained permutations}
Now that we have defined and enumerated linear and circular chained maximum rook placements, we relate these to some constructs from the theory of permutations.

\subsection{Definition}
On a regular $n\times n$ board, maximum rook placements correspond to permutations. In Section~\ref{sec:enum}, we defined and enumerated two new two-parameter families of maximum rook placements. 
In this section, we consider these as new families of permutations. We give a formal definition as follows.

\begin{definition}
\label{def:permmatrices}
Define the sets of \emph{chained linear and circular permutation matrices}, denoted $P_{n,k}^{-}$ and ${P}_{n,k}^{\circ}$,
as $k$-tuples of $n\times n$ $\{0,1\}$--matrices $\left(X^{(1)},X^{(2)},\ldots,X^{(k)}\right)$ satisfying:

\begin{enumerate}
\item  $\displaystyle\sum_{j=1}^n X^{(\ell-1)}_{i,j} + \displaystyle\sum_{j=1}^n X^{(\ell)}_{j,i}\in\{0,1\}$  for all $1\leq i\leq n, 1\leq\ell\leq k$, and 
\item the sum of all entries $\displaystyle\sum_{i,j,\ell}  X^{(\ell)}_{i,j}$ is maximum,
\end{enumerate}
where for $P_{n,k}^{\circ}$ we consider 
$X^{(0)}\equiv X^{(k)}$ and for  $P_{n,k}^{-}$ we consider 
$X^{(0)}$ to be the zero matrix. 
Let ${P}_{n,k}$ denote either  ${P}_{n,k}^{-}$ or ${P}_{n,k}^{\circ}$, depending on context.
\end{definition}

See Figure~\ref{fig:k6n4circperm} 
for an example of chained permutation matrix.

\begin{figure}[htbp]
\begin{center}
\includegraphics[scale=.95]{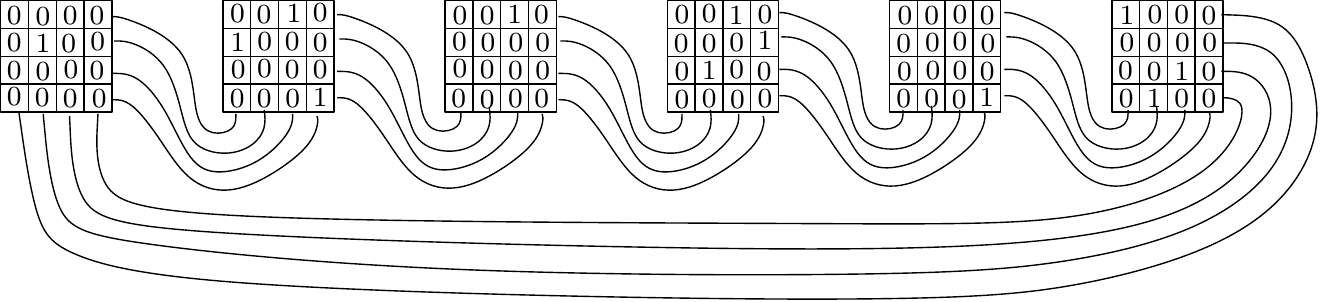}
\end{center}
\caption{The chained circular permutation matrix corresponding to the maximum rook placement of Figure~\ref{fig:k6n4circrooks}.}
\label{fig:k6n4circperm}
\end{figure}

\begin{proposition}
\label{prop:rookisperm}
$P_{n,k}$ is in bijection with the set of chained maximum rook placements on $B_{n,k}$.
\end{proposition}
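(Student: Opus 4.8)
The plan is to exhibit the obvious bijection: send a maximum rook placement on $B_{n,k}$ to the $k$-tuple $(X^{(1)}, \ldots, X^{(k)})$ where $X^{(\ell)}$ is the $\{0,1\}$-matrix having a $1$ in position $(i,j)$ precisely when there is a rook in row $i$, column $j$ of the board $B^{(\ell)}$. The map in the other direction is the same correspondence read backwards. The content of the proposition is that this correspondence carries the ``non-attacking and maximum'' condition on rook placements exactly to conditions (1) and (2) of Definition~\ref{def:permmatrices}, and vice versa.

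First I would fix a single $n\times n$ board $B^{(\ell)}$ and its associated matrix $X^{(\ell)}$, and observe the standard dictionary: rooks in $B^{(\ell)}$ are pairwise non-attacking \emph{within} $B^{(\ell)}$ (no two share a row or column) if and only if $X^{(\ell)}$ has at most one $1$ in each row and each column, i.e.\ $X^{(\ell)}$ is a partial permutation matrix. Next I would translate the cross-board attacking condition from Definition~\ref{def:lincirc}: a rook in the $i$th row of $B^{(\ell-1)}$ attacks a rook in the $i$th column of $B^{(\ell)}$. In matrix terms, the number of rooks in row $i$ of $B^{(\ell-1)}$ is $\sum_{j} X^{(\ell-1)}_{i,j}$ and the number in column $i$ of $B^{(\ell)}$ is $\sum_{j} X^{(\ell)}_{j,i}$; these boards contribute no mutually attacking pair, \emph{and} each individual board is internally non-attacking, exactly when $\sum_{j} X^{(\ell-1)}_{i,j} + \sum_{j} X^{(\ell)}_{j,i} \le 1$ for every $i$, which is condition (1) (the two summands are each already at most $1$ once internal non-attacking is known, but in fact condition (1) simultaneously forces the internal column-bound on $X^{(\ell)}$ and the internal row-bound on $X^{(\ell-1)}$, so no separate hypothesis is needed). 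I would spell out the linear case ($X^{(0)}$ the zero matrix, so the condition at $\ell=1$ just says each column of $X^{(1)}$ has at most one $1$) and the circular case ($X^{(0)} \equiv X^{(k)}$, handling the wrap-around), matching the two parallel clauses of Definitions~\ref{def:lincirc} and~\ref{def:permmatrices}. This shows: non-attacking rook placements on $B_{n,k}$ correspond bijectively to $k$-tuples satisfying (1).

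Finally, under this correspondence the total number of rooks equals $\sum_{i,j,\ell} X^{(\ell)}_{i,j}$, so a rook placement is \emph{maximum} precisely when the corresponding tuple has this sum maximum among all tuples satisfying (1) — which is condition (2). Since the map is a bijection between the full sets of non-attacking configurations (tuples satisfying (1)) on each side and it preserves the quantity being maximized, it restricts to a bijection between the maximum configurations on each side, i.e.\ between maximum rook placements on $B_{n,k}$ and $P_{n,k}$.

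I do not anticipate a genuine obstacle here; the only point requiring a little care is bookkeeping the indices so that the ``$j$th row of $B^{(i-1)}$ versus $j$th column of $B^{(i)}$'' phrasing of Definition~\ref{def:lincirc} lines up with the ``$X^{(\ell-1)}$ row sums plus $X^{(\ell)}$ column sums'' phrasing of Definition~\ref{def:permmatrices}, and in particular verifying that condition (1) alone already encodes \emph{all} of the non-attacking requirements (internal to each board as well as across adjacent boards), so that nothing is lost or double-counted. Making the boundary conventions ($X^{(0)}$ zero in the linear case, $X^{(0)}\equiv X^{(k)}$ in the circular case) match on both sides is the one place to be explicit.
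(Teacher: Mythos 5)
Your proposal is correct and takes essentially the same approach as the paper, whose entire proof is the one-line observation that a rook corresponds to a $1$ and an empty cell to a $0$; you simply spell out the dictionary between the attacking conditions and condition (1), and between maximality and condition (2). The only substantive point is the one you flag yourself — whether condition (1) really captures \emph{all} internal non-attacking constraints — and there the literal statement of Definition~\ref{def:permmatrices} in the linear case never bounds the row sums of the final matrix $X^{(k)}$ (row sums of $X^{(\ell-1)}$ appear only for $\ell-1\le k-1$), but since the paper's own proof relies on the same implicit convention, this does not distinguish your argument from theirs.
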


\begin{proof}
As in the case of standard permutations, let a rook be represented by a one and an unoccupied space on the board as a zero. The claim then follows directly.
\end{proof}

We make the following enumerative observations, which are clear from the definitions.
\begin{remark}
$P_{n,1}^{-}$ corresponds to standard permutations of $n$. $P_{n,4}^{\circ}$ is equivalent to permutations of $2n$, since the four matrices can be combined to make a $2n\times 2n$ permutation matrix.
\end{remark}

\subsection{Chained permutation bijections}
In this section, we transform chained permutations into forms analogous to the one-line notation and perfect matching form
of standard permutations.

\begin{definition}
\label{def:oneline}
Let the \emph{one-line notation} of a chained permutation be constructed as 

\[ p^{(1)}_1 p^{(1)}_2 \ldots p^{(1)}_n - p^{(2)}_1 p^{(2)}_2 \ldots p^{(2)}_n - \cdots - p^{(k)}_1 p^{(k)}_2 \ldots p^{(k)}_n \]

\noindent
where $p^{(\ell)}_i=j$ if $X^{(\ell)}_{i,j}=1$ and $p^{(\ell)}_i=0$ if $X^{(\ell)}_{i,j}=0$ for all $1\leq j\leq n$. That is, $p^{(\ell)}_i$ records the column of the unique 1 in row $i$ of the $\ell$th matrix if there is a 1 in that row, or zero if the $i$th row is all zeros. Note, in the circular case, we append a dash to the end to indicate $ p^{(k)}_1 p^{(k)}_2 \ldots p^{(k)}_n$ is chained to $p^{(1)}_1 p^{(1)}_2 \ldots p^{(1)}_n$.
\end{definition}

\begin{example}
The one-line notation corresponding to the chained permutation of Figure~\ref{fig:k6n4circperm} is:
\[0200-3104-3000-3420-0004-1032-.
\]

The one-line notation corresponding to the maximum rook placement of Figure~\ref{fig:k4n5linrooks} is:
\[30502-04200-00045-31200.\]
\end{example}

\begin{example}
The chained circular permutations in $P^{\circ}_{2,2}$ in one-line notation are as follows:
\[12-00-, \ 21-00-, \ 00-12-, \ 00-21-, \ 10-02-, \ 01-01-, \ 20-20-, \ 02-10-.\]
See also Figure~\ref{fig:k2n2circperm}.
\end{example}

\begin{figure}[htbp]
\begin{center}
\includegraphics[scale=.5]{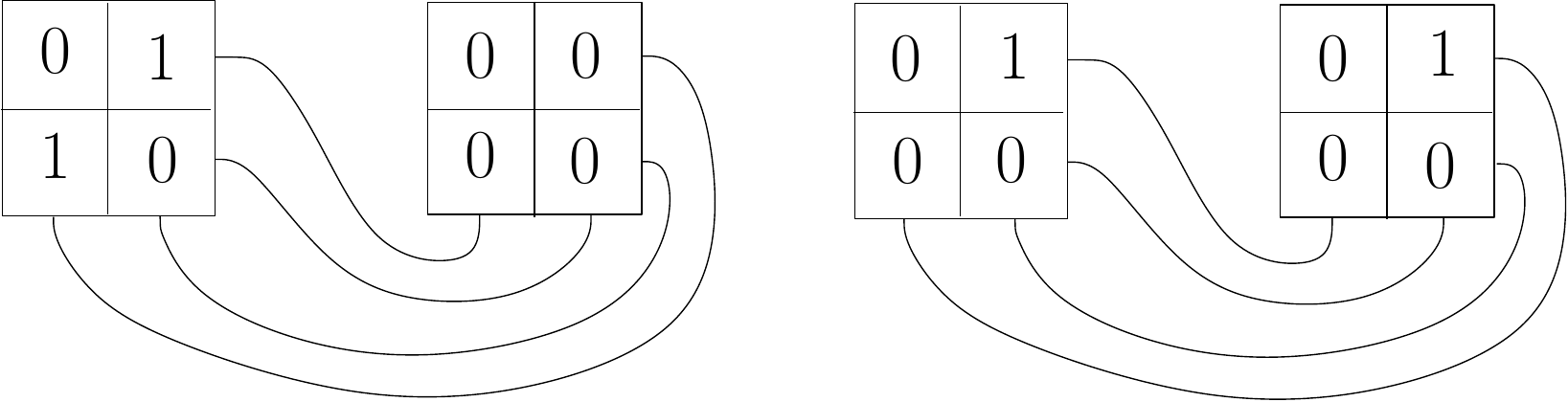}
\end{center}
\caption{The chained circular permutation matrices corresponding to the one-line notation chained permutations $21-00-$ and $20-20-$.}
\label{fig:k2n2circperm}
\end{figure}

The one-line notation of a chained permutation may be described without reference to the matrix form as follows.

\begin{theorem}
\label{prop:oneline}
\[ p^{(1)}_1 p^{(1)}_2 \ldots p^{(1)}_n - p^{(2)}_1 p^{(2)}_2 \ldots p^{(2)}_n - \cdots - p^{(k)}_1 p^{(k)}_2 \ldots p^{(k)}_n \]
is the one-line notation of a chained permutation in $P_{n,k}$ if and only if it satisfies the following for all $1\leq \ell\leq k$:
\begin{enumerate}
\item each $p^{(\ell)}_i$ is an integer with $0\leq p^{(\ell)}_i\leq n$, 
\item if $p^{(\ell)}_i=p^{(\ell)}_j$ then $p^{(\ell)}_i=0$,
\item the number of nonzero entries equals $\frac{n(k+1)}{2}$ for $k$ odd linear and $\lfloor\frac{nk}{2}\rfloor$ in all other cases, and
\item if $p^{(\ell-1)}_i\neq 0$ then 
$p^{(\ell)}_{j}\neq i$ for all $1\leq j\leq n$, 
\end{enumerate}
where for $P^{\circ}_{n,k}$, we consider $p^{(0)}_i= p^{(k)}_i$  and for $P^{-}_{n,k}$ we consider $p^{(0)}_i=0$ for all $1\leq i\leq n$.
\end{theorem}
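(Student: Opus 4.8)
The plan is to prove this by translating each of the four numbered conditions on the one-line notation back into the defining conditions of a chained permutation matrix in $P_{n,k}$, using the dictionary $p^{(\ell)}_i = j \iff X^{(\ell)}_{i,j}=1$ from Definition~\ref{def:oneline}. Since that dictionary is already a well-defined map from matrix tuples to sequences, the real content is to check that a sequence $\left(p^{(\ell)}_i\right)$ is in the image precisely when it satisfies (1)--(4); I would do this by proving both directions simultaneously, showing that (1) and (2) are exactly what is needed for the sequence to encode a valid $k$-tuple of $\{0,1\}$-matrices with at most one $1$ per row, that (4) encodes condition (1) of Definition~\ref{def:permmatrices}, and that (3) encodes condition (2) (maximality).

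First I would handle the passage between sequences and matrices. Given a sequence satisfying (1), define $X^{(\ell)}_{i,j}=1$ iff $p^{(\ell)}_i=j$ (and note $p^{(\ell)}_i=0$ means row $i$ of $X^{(\ell)}$ is all zeros); this is clearly a $\{0,1\}$-matrix with at most one $1$ per row. Condition (2) says that within a fixed board $\ell$, no column index is repeated among the nonzero $p^{(\ell)}_i$, i.e. each $X^{(\ell)}$ has at most one $1$ per column as well; combined with at most one per row, this is exactly the ``same row or column on the same board'' part of the non-attacking condition. Conversely, starting from a matrix tuple, reading off $p^{(\ell)}_i$ produces a sequence satisfying (1) automatically, and the at-most-one-$1$-per-column property of each $X^{(\ell)}$ gives (2).

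Next I would translate condition (1) of Definition~\ref{def:permmatrices}, namely $\sum_j X^{(\ell-1)}_{i,j}+\sum_j X^{(\ell)}_{j,i}\in\{0,1\}$ for all $i,\ell$. The first sum is $1$ iff $p^{(\ell-1)}_i\neq 0$; the second sum counts $j$ with $X^{(\ell)}_{j,i}=1$, i.e. with $p^{(\ell)}_j=i$. So the constraint ``the total is $\le 1$'' says: if $p^{(\ell-1)}_i\neq 0$ then there is no $j$ with $p^{(\ell)}_j=i$, and if some $p^{(\ell)}_j=i$ then (using (2) it is unique and) $p^{(\ell-1)}_i=0$. That is precisely condition (4), read in both directions, with the convention $p^{(0)}_i=0$ (linear) or $p^{(0)}_i=p^{(k)}_i$ (circular) matching the matrix conventions $X^{(0)}=0$ resp.\ $X^{(0)}\equiv X^{(k)}$. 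I should remark that (4) as stated is an ``if $p^{(\ell-1)}_i\neq0$ then $p^{(\ell)}_j\neq i$'' implication, whose contrapositive gives the other half, so a single implication suffices to capture the full $\{0,1\}$ membership condition once (1)--(2) are in force.

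Finally I would handle condition (2) of Definition~\ref{def:permmatrices}, maximality of the total number of $1$'s, which is condition (3) of the theorem. The total number of nonzero $p^{(\ell)}_i$ equals $\sum_{i,j,\ell}X^{(\ell)}_{i,j}$, which is the number of rooks in the corresponding placement via Proposition~\ref{prop:rookisperm}. By Lemmas~\ref{prop:maxodd} and~\ref{prop:maxeven}, a non-attacking rook placement on $B_{n,k}$ is maximum iff its rook count equals $\frac{n(k+1)}{2}$ for $k$ odd linear and $\lfloor\frac{nk}{2}\rfloor$ otherwise; so among tuples already satisfying (1), (2), (4) (equivalently, among all non-attacking placements), condition (2) of Definition~\ref{def:permmatrices} holds iff (3) holds. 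Assembling these equivalences gives the theorem. I do not expect a serious obstacle here — the statement is essentially a bookkeeping translation — but the one point requiring care is making sure the directional phrasing of condition (4) really is equivalent (given (1)--(2)) to the symmetric set-membership condition $\sum_j X^{(\ell-1)}_{i,j}+\sum_j X^{(\ell)}_{j,i}\in\{0,1\}$, and that the circular index conventions $p^{(0)}=p^{(k)}$ versus $p^{(0)}=0$ are threaded through consistently.
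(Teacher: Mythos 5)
Your proposal is correct and follows essentially the same route as the paper's (much terser) proof: translate (1)--(2) into the row/column constraints on the $\{0,1\}$-matrices, (4) into condition (1) of Definition~\ref{def:permmatrices}, and (3) into the maximality condition via Lemmas~\ref{prop:maxodd} and~\ref{prop:maxeven}, then invert the dictionary $p^{(\ell)}_i=j\iff X^{(\ell)}_{i,j}=1$. Your explicit check that the one-directional phrasing of (4), together with (1)--(2), recovers the symmetric condition $\sum_j X^{(\ell-1)}_{i,j}+\sum_j X^{(\ell)}_{j,i}\in\{0,1\}$ is a detail the paper leaves implicit, but the argument is the same.
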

\begin{proof} 
(1) and (2) are clear from the construction in Definition~\ref{def:oneline}.
(3) follows from the second condition in Definition~\ref{def:permmatrices} and Lemmas~\ref{prop:maxodd} and~\ref{prop:maxeven}. 
(4) is equivalent to the first condition of Definition~\ref{def:permmatrices} which determines the chaining of the matrices. 

Given $p^{(1)}_1 p^{(1)}_2 \ldots p^{(1)}_n - p^{(2)}_1 p^{(2)}_2 \ldots p^{(2)}_n - \cdots - p^{(k)}_1 p^{(k)}_2 \ldots p^{(k)}_n$ satisfying the above conditions, reconstruct the chained permutation matrix by setting $X^{(\ell)}_{i,j}=1$ if $p^{(\ell)}_i=j$ and  $0$ in all other entries. Conditions (3) and (4) above guarantee that the matrix satisfies the conditions of Definition~\ref{def:permmatrices}.
\end{proof}

We now define a graph whose matchings we show are in bijection with chained permutations.
\begin{definition}
Construct the graph $G^{-}_{n,k}$ as follows. Begin with a grid with rows $0$ through $k$ of $n$ vertices each. Between the vertices of rows $i-1$ and $i$, for $1\leq i\leq k$, insert the edges between all vertices of each row to form the complete bipartite graph $K_{n,n}$. Define $G^{\circ}_{n,k}$ by identifying the corresponding vertices in rows $0$ and $k$ of $G^{-}_{n,k}$. See Figure~\ref{fig:Gnk}.

Let $G_{n,k}$ denote either $G^{-}_{n,k}$ or $G^{\circ}_{n,k}$, depending on context. Note we consider $G_{n,k}$ to be a graph with labelled vertices.
\end{definition}

\begin{figure}[htbp]
\includegraphics[scale=.85]{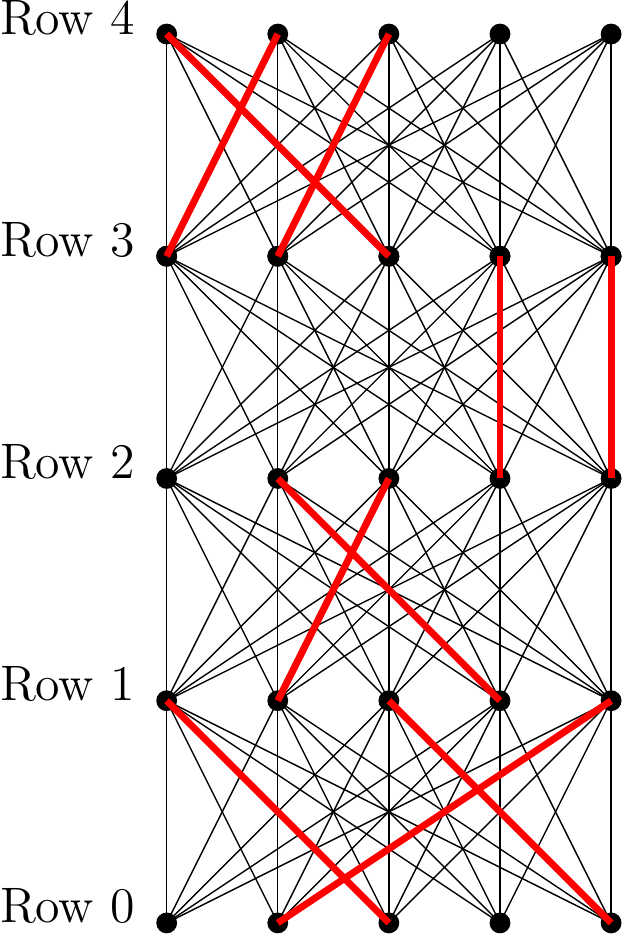}
\hspace{.25in}
\includegraphics[scale=.85]{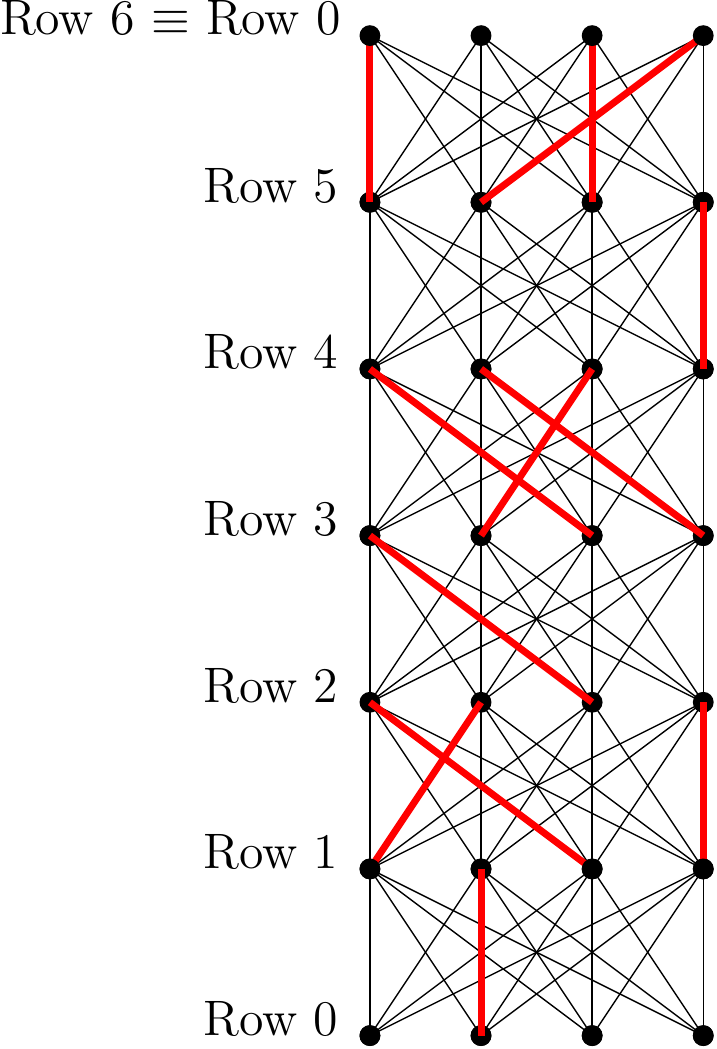}
\caption{Left: A matching on the graph $G_{5,4}^{-}$ corresponding to the maximum rook placement of Figure~\ref{fig:k4n5linrooks}; Right: A perfect matching on the graph $G_{4,6}^{\circ}$ corresponding to the chained permutation of Figure~\ref{fig:k6n4circperm}.}
\label{fig:Gnk}
\end{figure}

\begin{definition}
A \emph{matching} in a graph is a set of edges for which no two share a common vertex. A \emph{perfect matching} is a matching for which each vertex in the graph is incident to exactly one edge in the matching. Note that a necessary condition for a graph to have a perfect matching is that it has an even number of vertices. A \emph{near-perfect matching} of a graph with an odd number of vertices is a matching such that every vertex of the graph except one is incident to an edge in the matching.
\end{definition}

\begin{theorem}
\label{prop:matching}
The set of chained permutations $P^{-}_{n,k}$ is in bijection with perfect matchings on $G^{-}_{n,k}$ if $k$ is odd, and matchings on $G^{-}_{n,k}$ that leave $n$ vertices unmatched if $k$ is even.

The set of chained permutations $P^{\circ}_{n,k}$ is in bijection with perfect matchings on $G^{\circ}_{n,k}$ if at least one of $n$ or $k$ are even, and near-perfect matchings on $G^{\circ}_{n,k}$ if $n$ and $k$ are both odd. 
\end{theorem}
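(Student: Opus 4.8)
The plan is to construct an explicit bijection between chained permutations and matchings on $G_{n,k}$, row by row, mirroring the structure already established in Theorem~\ref{prop:oneline}. Given a chained permutation in one-line notation, for each board index $\ell$ and each row $i$ with $p^{(\ell)}_i = j \neq 0$, I will place an edge in $G_{n,k}$ between vertex $i$ in row $\ell-1$ and vertex $j$ in row $\ell$; if $p^{(\ell)}_i = 0$, no edge is contributed from that cell. The inverse map reads a matching edge between vertex $i$ of level $\ell-1$ and vertex $j$ of level $\ell$ as the assignment $p^{(\ell)}_i = j$. So the first step is to check this is well-defined in both directions and that the two maps are mutually inverse.

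The second step is to verify that the matching condition (no two edges share a vertex) is exactly equivalent to conditions (2) and (4) of Theorem~\ref{prop:oneline}. Two edges from board $\ell$ sharing a vertex $j$ in level $\ell$ would mean $p^{(\ell)}_i = p^{(\ell)}_{i'} = j \neq 0$ for $i \neq i'$, forbidden by (2) (applied with the roles of index and value swapped — one also needs the easy observation that the map $i \mapsto p^{(\ell)}_i$ restricted to nonzero values is injective, which follows from (2) together with the fact that a $\{0,1\}$-matrix row has at most one $1$, already built into the one-line notation). Two edges sharing a vertex $i$ in level $\ell$ — one coming ``down'' from board $\ell$ (so $p^{(\ell)}_{i'} = i$ for some $i'$) and one going ``up'' into board $\ell+1$ (so $p^{(\ell+1)}_i \neq 0$) — is precisely the configuration forbidden by condition (4). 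In the circular case, identifying levels $0$ and $k$ makes the wrap-around instance of (4) (with $p^{(0)}_i = p^{(k)}_i$) correspond to the sharing condition at those identified vertices, which is why $G^{\circ}_{n,k}$ is the right graph. Conversely, a matching with no shared vertices yields a one-line word satisfying (1), (2), (4) automatically.

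The third step handles condition (3): the number of nonzero entries in the one-line notation equals the number of edges in the matching, and this is forced to be maximum precisely because chained permutations are defined (Definition~\ref{def:permmatrices}) as those with the maximum total number of $1$'s. So I must count vertices and compare. The graph $G^{-}_{n,k}$ has $n(k+1)$ vertices; a matching using $e$ edges covers $2e$ vertices. By Lemma~\ref{prop:maxodd}, the maximum number of rooks — hence edges — is $n\lceil k/2\rceil$, so $2e = 2n\lceil k/2\rceil$; when $k$ is odd this equals $n(k+1)$, all vertices covered, giving a perfect matching; when $k$ is even it equals $nk$, leaving $n$ vertices uncovered. For $G^{\circ}_{n,k}$ the vertex count is $nk$ (levels $0$ and $k$ identified), and by Lemma~\ref{prop:maxeven} the maximum edge count is $\lfloor nk/2\rfloor$, so $2e = 2\lfloor nk/2\rfloor = nk$ when $nk$ is even (perfect matching) and $= nk-1$ when $n$ and $k$ are both odd (near-perfect, one vertex uncovered). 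Crucially I must argue the other direction too: any matching of maximal size on $G_{n,k}$ corresponds under the inverse map to a word with the maximum number of nonzeros, hence to an element of $P_{n,k}$ — this follows since the bijection of the first two steps is size-preserving and matches the combinatorial constraints, so ``maximum edges'' and ``maximum $1$'s'' transport to each other.

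I expect the main obstacle to be bookkeeping the vertex-sharing analysis at the ``seam'' in the circular case: one must be careful that an edge incident to an identified vertex of $G^{\circ}_{n,k}$ is counted once, not twice, and that the single instance of condition (4) with $\ell = 1$ (using $p^{(0)} = p^{(k)}$) corresponds correctly, with no double-counting of constraints between board $k$ and board $1$. A clean way to avoid confusion is to first prove everything for $G^{-}_{n,k}$ with levels $0,\dots,k$ kept distinct and $p^{(0)} \equiv 0$, observe the resulting matching never touches level $0$, and then obtain the circular statement by the quotient identifying levels $0$ and $k$ — this converts the ``level-$0$ is empty, $p^{(0)}=0$'' bookkeeping into ``level-$k$ vertices may now be shared between an up-edge of board $k$ and... nothing, since there is no board $k+1$'' and instead forces the genuine circular constraint $p^{(0)}_i = p^{(k)}_i$ to be read at the identified vertices. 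Once that identification is set up carefully, the equivalences of Steps~2 and~3 are routine, and the theorem follows.
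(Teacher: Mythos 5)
Your overall strategy is the same as the paper's: encode each $1$ of $X^{(\ell)}$ as an edge of the copy of $K_{n,n}$ between levels $\ell-1$ and $\ell$, check that ``matching'' is equivalent to the chaining constraints, and then count edges against vertices using Lemmas~\ref{prop:maxodd} and~\ref{prop:maxeven} (your Step~3 is correct and is exactly the paper's count). However, your edge convention is the wrong way around, and this breaks Step~2. You send $p^{(\ell)}_i=j$ (i.e., $X^{(\ell)}_{i,j}=1$) to the edge joining vertex $i$ of level $\ell-1$ to vertex $j$ of level $\ell$, so at a given level the shared-vertex condition compares the \emph{column} indices of $X^{(\ell)}$ with the \emph{row} indices of $X^{(\ell+1)}$. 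But the chaining constraint (condition~(1) of Definition~\ref{def:permmatrices}, equivalently condition~(4) of Theorem~\ref{prop:oneline}) forbids a $1$ in \emph{row} $i$ of $X^{(\ell)}$ together with a $1$ in \emph{column} $i$ of $X^{(\ell+1)}$; under your convention these two events live two levels apart and can never collide at a vertex, while your map instead imposes the transposed --- and incorrect --- constraint. Concretely, take $n=2$, $k=2$ linear and the chained permutation with one-line notation $10-20$, i.e.\ $X^{(1)}_{1,1}=X^{(2)}_{1,2}=1$; this satisfies all conditions of Theorem~\ref{prop:oneline} (row $1$ of $X^{(1)}$ is occupied but column $1$ of $X^{(2)}$ is empty). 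Your rule produces the edge from vertex $1$ of level $0$ to vertex $1$ of level $1$ and the edge from vertex $1$ of level $1$ to vertex $2$ of level $2$, which share a vertex: a valid chained permutation maps to a non-matching. The fix is the paper's convention: send $X^{(\ell)}_{i,j}=1$ to the edge joining vertex $i$ of level $\ell$ (the row index stays at the matrix's own level) to vertex $j$ of level $\ell-1$; then a shared vertex at level $\ell-1$ says exactly that row $i$ of $X^{(\ell-1)}$ and column $i$ of $X^{(\ell)}$ are both occupied, which is the forbidden configuration, and the circular identification of levels $0$ and $k$ captures the wrap-around constraint correctly.

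A related symptom of the same confusion: in your last paragraph you assert that, since $p^{(0)}\equiv 0$, ``the resulting matching never touches level $0$.'' This is false under either convention --- level $0$ carries one endpoint of every edge contributed by $X^{(1)}$ (its column indices in the correct convention). The quotient description of $G^{\circ}_{n,k}$ is fine as a definition of the graph, but the seam analysis only works once the orientation above is repaired; the paper sidesteps this by arguing the circular case directly on $G^{\circ}_{n,k}$.
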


\begin{proof}
The bijection is as follows. Given a chained permutation in $P_{n,k}$, let row $\ell$ of $G_{n,k}$ represent the $\ell$th matrix $X^{(\ell)}$. Construct a matching $M$ of $G_{n,k}$ as follows. If $X_{i,j}^{(\ell)}=1$, include the edge from the $i$th vertex of row $\ell$ to the $j$th vertex of row $\ell-1$ in $M$. Note this is  a matching because if $X_{i,j}^{(\ell)}=1$ then $X_{j,i'}^{(\ell + 1)}\neq 1$ for any $i'$ and $X_{j',i}^{(\ell - 1)}\neq 1$ for any $j'$, so no vertex  of $G_{n,k}$ is incident to more than one edge in $M$. This map is clearly invertible. See Figure~\ref{fig:Gnk} for an example.

By maximality of the sum of the matrix entries, as many vertices as possible are matched. Each $1$ in the chained permutation corresponds to an edge in the matching. So by Lemmas~\ref{prop:maxodd} and \ref{prop:maxeven}, there are $\frac{n(k+1)}{2}$ edges in $M$ for $k$ odd linear and $\lfloor\frac{nk}{2}\rfloor$ in all other cases. But recall, there are $n(k+1)$ vertices in $G^{-}_{n,k}$ and $kn$ vertices in $G^{\circ}_{n,k}$. So $M$ is a perfect matching for $k$ odd linear, since there are $n(k+1)$ vertices in  of $G^{-}_{n,k}$ and $\frac{n(k+1)}{2}$ edges in $M$. For $k$ even linear, there are $n(k+1)$ vertices in $G^{-}_{n,k}$ and $\frac{nk}{2}$ edges in $M$, so $nk$ vertices are incident to an edge in $M$, leaving $n$ vertices unmatched. $M$ is a perfect matching in the circular case for $n$ or $k$ even, since then the number of edges in $M$ is $\lfloor\frac{nk}{2}\rfloor=\frac{nk}{2}$ while the number of vertices in  of $G^{\circ}_{n,k}$ is $nk$. For circular $n$ and $k$ odd, the number of edges in $M$ is $\lfloor\frac{nk}{2}\rfloor=\frac{nk-1}{2}$ while the number of vertices in  of $G^{\circ}_{n,k}$ is $nk$, so the matching is a near-perfect matching since $nk-1$ vertices are incident to an edge in $M$.
\end{proof}

\section{Chained alternating sign matrices}
\emph{Alternating sign matrices} are square matrices with entries in $\{0,1,-1\}$ such that the rows and columns each sum to $1$ and the nonzero entries alternate in sign across each row or column~\cite{MRRASM}; this is a natural superset containing permutations. The enumeration of alternating sign matrices~\cite{zeilberger,kuperbergASMpf}, a major accomplishment in enumerative combinatorics in the 1990's, ignited a flurry of research on the border of algebraic combinatorics and statistical physics, including the proof of the Razumov-Stroganov conjecture~\cite{razstrog,razstrogpf,razstrogpf2}, in addition to much further investigation of  combinatorial properties and connections. 

In Subection~\ref{sec:ASMdef}, we define an alternating sign matrix analogue of chained linear and circular permutations. In Subsection~\ref{sec:enumASM}, we enumerate chained alternating sign matrices for special values of $n$ and $k$. In Subsection~\ref{sec:bijASM}, we draw connections between chained alternating sign matrices and analogues of monotone triangles, square ice, and fully-packed loops on generalized domains. 
\subsection{Definition}
\label{sec:ASMdef}

\begin{definition}
\label{def:chainedASM}
Define \emph{chained (linear or circular) alternating sign matrices} as  $k$-tuples of $n\times n$ $\{-1,0,1\}$--matrices $\left(A^{(1)}, A^{(2)},\ldots,A^{(k)}\right)$ satisfying: 
\begin{enumerate}
\item $\displaystyle\sum_{j=1}^m A^{(\ell)}_{i,j}\in\{0,1\}$ for each $1 \leq i \leq n, 1\leq m \leq n, 1\leq\ell\leq k$,
\item $\displaystyle\sum_{j=1}^n A^{(\ell-1)}_{i,j} + \displaystyle\sum_{j=1}^m A^{(\ell)}_{n+1-j,i}\in\{0,1\}$ 
for each $1 \leq i \leq n, 1 \le m \le n, 1\leq \ell\leq k$, where for linear we consider $A^{(0)}$ to be the zero matrix and for circular we consider $A^{(0)}\equiv A^{(k)}$, and
\item the sum of all entries $\displaystyle\sum_{i,j,\ell}A^{(\ell)}_{i,j}$ is maximized.
\end{enumerate}

Let $ASM_{n,k}^{-}$ denote the set of $k$-chained linear $n\times n$ alternating sign matrices, $ASM_{n,k}^{\circ}$ the set of $k$-chained circular $n\times n$ alternating sign matrices, and $ASM_{n,k}$ either $ASM_{n,k}^{-}$ or $ASM_{n,k}^{\circ}$, depending on context.
\end{definition}

See Figures~\ref{fig:k2n3ASMlin}, \ref{fig:k4n3ASMcirc}, and \ref{fig:boardASM} for examples.

\begin{figure}[htbp]
\begin{center}
\includegraphics[scale=.9]{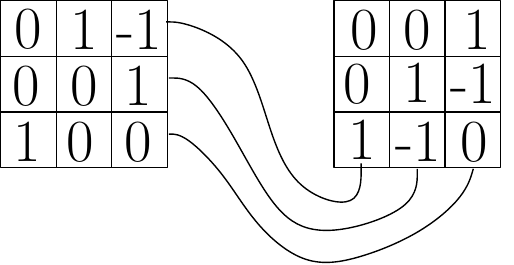}
\end{center}
\caption{A chained alternating sign matrix in $ASM^{-}_{3,2}$.}
\label{fig:k2n3ASMlin}
\end{figure}

\begin{figure}[htbp]
\begin{center}
\includegraphics[scale=1]{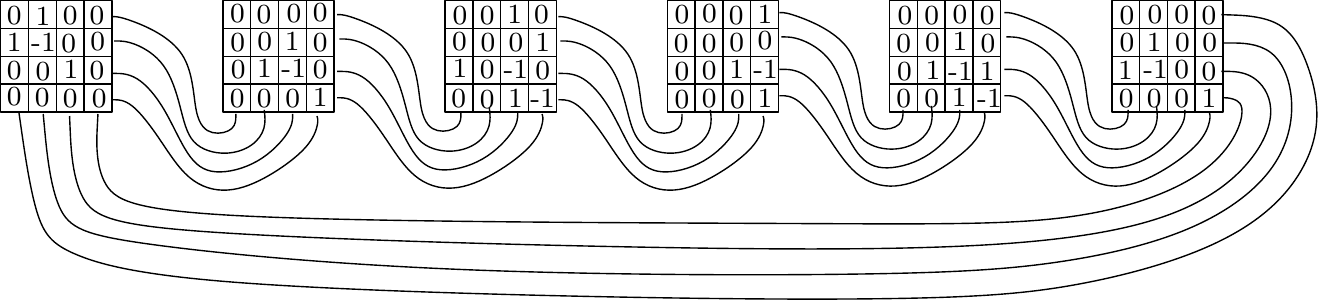}
\end{center}
\caption{A chained alternating sign matrix in $ASM^{\circ}_{4,6}$.}
\label{fig:k4n3ASMcirc}
\end{figure}

\begin{figure}[htbp]
\begin{center}
\includegraphics[scale=.6]{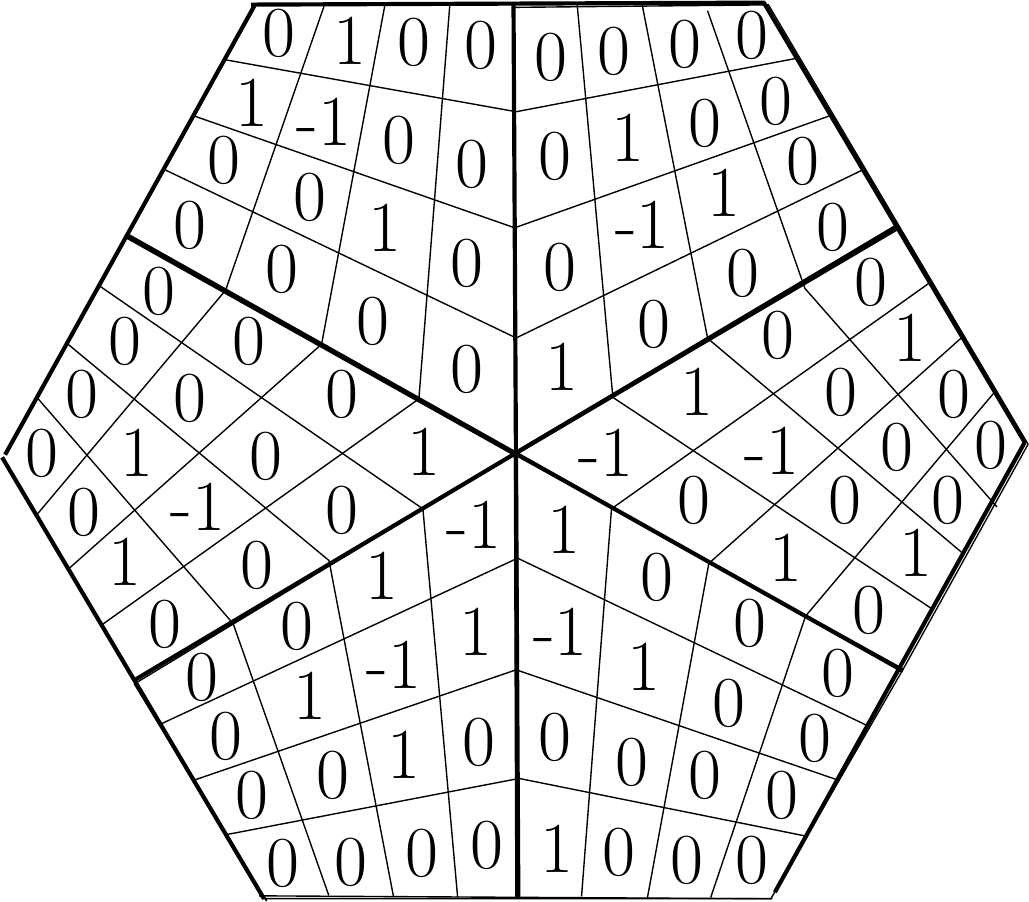}
\end{center}
\caption{The chained alternating sign matrix of Figure~\ref{fig:k4n3ASMcirc}, drawn on the three-person chess board of Figure~\ref{fig:3personchessboard}.}
\label{fig:boardASM}
\end{figure}

\begin{remark}
\label{rmk:asmadj}
It follows from part $(2)$ of Definition~\ref{def:chainedASM} that the total sum of the entries in adjacent matrices in a chained alternating sign matrix is less than or equal to $n$.
\end{remark}

\begin{lemma}
\label{prop:asmsum}
The  sum of entries in a chained alternating sign matrix is the same as the  number of ones in a chained permutation of the same shape. Moreover, a composition $a:=(a_1,a_2,\ldots,a_k)$ 
equals \[\left(\displaystyle\sum_{i,j}A^{(1)},\displaystyle\sum_{i,j}A^{(2)},\ldots,\displaystyle\sum_{i,j}A^{(k)}\right)\]
for some $\left(A^{(1)}, A^{(2)},\ldots,A^{(k)}\right)\in ASM_{n,k}$ if and only if it equals \[\left(\displaystyle\sum_{i,j}X^{(1)},\displaystyle\sum_{i,j}X^{(2)},\ldots,\displaystyle\sum_{i,j}X^{(k)}\right)\] for some $\left(X^{(1)}, X^{(2)},\ldots,X^{(k)}\right)\in P_{n,k}$.
\end{lemma}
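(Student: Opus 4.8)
The plan is to prove the two assertions in sequence, using the composition characterizations already established. First I would show that the set of composition-sequences realized by chained alternating sign matrices is exactly $\mathfrak{C}_{m,n,k}$ for the relevant value of $m$; the equality of maximum total sums then follows immediately. The key observation is that part $(1)$ of Definition~\ref{def:chainedASM} forces each row of each $A^{(\ell)}$ to have row-sum $0$ or $1$ (taking $m=n$ in condition $(1)$), and likewise—via the alternating-sign/partial-sum condition—each column-sum of $A^{(\ell)}$ is $0$ or $1$. Hence $a_\ell := \sum_{i,j} A^{(\ell)}_{i,j}$ is a nonnegative integer at most $n$, equal to the number of rows of $A^{(\ell)}$ with row-sum $1$, equivalently the number of columns with column-sum $1$. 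Condition $(2)$ with $m=n$ reads $\sum_j A^{(\ell-1)}_{i,j} + \sum_j A^{(\ell)}_{j,i}\in\{0,1\}$, which says: for each index $i$, the $i$th row-sum of $A^{(\ell-1)}$ plus the $i$th column-sum of $A^{(\ell)}$ is at most $1$. Summing over $i$ gives $a_{\ell-1}+a_\ell\le n$ (this is Remark~\ref{rmk:asmadj}), with the linear/circular convention on $a_0$ matching that of Lemma~\ref{lem:compositions} exactly.

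Next I would prove both inclusions of compositions. For the forward direction, the paragraph above shows any $\left(\sum_{i,j}A^{(1)},\ldots,\sum_{i,j}A^{(k)}\right)$ satisfies $a_{i-1}+a_i\le n$ for all $i$, hence lies in $\mathfrak{C}_{m,n,k}$ by Lemma~\ref{lem:compositions} (with $m=\sum a_i$). Conversely, given any composition $(a_1,\ldots,a_k)$ with $a_{i-1}+a_i\le n$, the explicit rook placement constructed in the proof of Lemma~\ref{lem:compositions} is a chained permutation matrix realizing this composition (a permutation matrix is in particular an alternating sign matrix, and it satisfies conditions $(1)$ and $(2)$ of Definition~\ref{def:chainedASM} since for a $\{0,1\}$-matrix the partial-sum conditions reduce to the entry conditions of Definition~\ref{def:permmatrices}); so this composition is realized both by a chained permutation matrix and, a fortiori, by a chained alternating sign matrix. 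This shows the set of achievable compositions is the same for $ASM_{n,k}$ and for $P_{n,k}$ \emph{before} imposing maximality: in both cases it is $\{(a_1,\ldots,a_k) : a_{i-1}+a_i\le n \ \forall i\}$ with the appropriate $a_0$ convention.

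Finally, since the maximality condition $(3)$ of Definition~\ref{def:chainedASM} and condition $(2)$ of Definition~\ref{def:permmatrices} both select precisely those achievable compositions maximizing $\sum_i a_i$, and the achievable sets coincide, the maximum value of $\sum_i a_i$ is the same in both settings—namely $n\lceil k/2\rceil$ for the linear case and $\lfloor nk/2\rfloor$ otherwise, by Lemmas~\ref{prop:maxodd} and~\ref{prop:maxeven}. This proves the first sentence of the lemma. Moreover, a composition $a$ is realized by some element of $ASM_{n,k}$ iff it is achievable \emph{and} maximizing, iff it is realized by some element of $P_{n,k}$, which is the displayed equivalence. I expect the main (though still routine) obstacle to be the careful verification that conditions $(1)$ and $(2)$ of Definition~\ref{def:chainedASM} genuinely force each individual row-sum and column-sum of every $A^{(\ell)}$ into $\{0,1\}$; this uses the standard fact that an $n\times n$ matrix whose every row and column partial-sum lies in $\{0,1\}$ is an alternating sign matrix, together with tracking the index reversal $n+1-j$ appearing in condition $(2)$, which does not affect the total column-sum.
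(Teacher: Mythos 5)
Your proposal is correct and follows essentially the same route as the paper: chained permutations satisfy conditions (1) and (2) of Definition~\ref{def:chainedASM} (giving the lower bound on the maximum sum), and summing condition (2) with $m=n$ over $i$ yields $a_{\ell-1}+a_\ell\le n$ (Remark~\ref{rmk:asmadj}), so the achievable compositions coincide with those of Lemma~\ref{lem:compositions} and the maximization arguments of Lemmas~\ref{prop:maxodd} and~\ref{prop:maxeven} carry over verbatim. The only quibble is your unused aside that each column sum of $A^{(\ell)}$ lies in $\{0,1\}$ so that $a_\ell$ also counts columns with sum $1$ — condition (2) only controls the bottom-up column partial sums jointly with the preceding row sum, so that claim would need more care — but your derivation of $a_{\ell-1}+a_\ell\le n$ does not depend on it.
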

\begin{proof} 
The maximum sum is at least the same as in the permutation case, since a chained permutation satisfies Conditions $(1)$ and $(2)$ of Definition~\ref{def:chainedASM}. 

The rest of the claim follows by Remark \ref{rmk:asmadj} and the proof technique of Lemmas~\ref{prop:maxodd} (linear case) and  \ref{prop:maxeven} (circular case) with the following change: instead of placing $\alpha$ rooks on a given board, say board $i$, we have a total sum of $\alpha$ on matrix $A^{(i)}$. 
\end{proof}

\begin{corollary}
\label{cor:nonegative}
The chained alternating sign matrices with no $-1$ entries are exactly the chained permutations.
\end{corollary}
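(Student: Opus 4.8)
The plan is to prove the two inclusions separately. One direction is immediate: every chained permutation, viewed as a $k$-tuple of $\{0,1\}$-matrices, has no $-1$ entry and — as already observed inside the proof of Lemma~\ref{prop:asmsum} — satisfies Conditions $(1)$ and $(2)$ of Definition~\ref{def:chainedASM}. Since by Lemma~\ref{prop:asmsum} the maximal entry sum achieved by chained alternating sign matrices coincides with the number of ones in a chained permutation of the same shape, a chained permutation also satisfies Condition $(3)$, hence lies in $ASM_{n,k}$. Thus every chained permutation is a chained alternating sign matrix with no $-1$ entry.

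For the reverse inclusion, suppose $\left(A^{(1)},\ldots,A^{(k)}\right)\in ASM_{n,k}$ has no $-1$ entry; then each $A^{(\ell)}$ is a $\{0,1\}$-matrix. First I would argue that each $A^{(\ell)}$ has at most one $1$ in each row and at most one $1$ in each column. The row condition is exactly Condition $(1)$ of Definition~\ref{def:chainedASM}: taking $m=n$ forces each row sum to lie in $\{0,1\}$, and with nonnegative integer entries this means at most one $1$ per row. For columns, I would use Condition $(2)$ with $\ell$ replaced by $\ell+1$ (cyclically, in the circular case) and $m=n$: the term $\sum_{j=1}^n A^{(\ell+1-1)}_{i,j}=\sum_{j=1}^n A^{(\ell)}_{i,j}$ is nonnegative, and adding the full $i$th column sum of $A^{(\ell+1)}$ must still lie in $\{0,1\}$; since all summands are nonnegative integers, the $i$th column sum of $A^{(\ell+1)}$ is at most $1$. (In the linear case $\ell=k$, the column bound on $A^{(k)}$ needs a separate remark: one can instead read Condition $(2)$ for that matrix with $A^{(0)}$ the zero matrix shifted appropriately, or simply note that a $\{0,1\}$-matrix with all row sums $\le 1$ whose nonzero entries we now also want to bound per column — actually the column bound for the last linear matrix follows because Condition $(2)$ is stated for all $1\le\ell\le k$ and involves $A^{(\ell-1)}$ and $A^{(\ell)}$, so taking $\ell$ so that $A^{(\ell)}=A^{(k)}$ handles it directly.) Consequently each $A^{(\ell)}$ is a partial permutation matrix, and moreover the chaining condition $(2)$ with $m=n$ specializes exactly to Condition $(1)$ of Definition~\ref{def:permmatrices}.

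It then remains to check that $\left(A^{(1)},\ldots,A^{(k)}\right)$ also satisfies Condition $(2)$ of Definition~\ref{def:permmatrices}, namely that the total number of ones is maximum among $\{0,1\}$-tuples obeying the chaining constraint. But this is exactly what Lemma~\ref{prop:asmsum} gives: the entry-sum of our tuple is maximal among all chained alternating sign matrices, hence equals the maximal number of ones in a chained permutation of that shape, which is $\tfrac{n(k+1)}{2}$ for $k$ odd linear and $\lfloor\tfrac{nk}{2}\rfloor$ otherwise by Lemmas~\ref{prop:maxodd} and~\ref{prop:maxeven}. So our tuple is a $\{0,1\}$-matrix tuple satisfying Condition $(1)$ of Definition~\ref{def:permmatrices} and attaining the maximum entry sum, i.e.\ it lies in $P_{n,k}$. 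This establishes the reverse inclusion and completes the proof.

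The only mildly delicate point — the step I expect to be the main obstacle — is the bookkeeping needed to extract the per-column bound on each individual matrix (especially the last matrix in the linear case) from Condition $(2)$ of Definition~\ref{def:chainedASM}, since that condition mixes a row sum of $A^{(\ell-1)}$ with a partial column sum of $A^{(\ell)}$; once one is careful about which index of the hypothesis to invoke, everything else is a direct specialization of the alternating-sign-matrix axioms at $m=n$ together with the entry-sum equality from Lemma~\ref{prop:asmsum}.
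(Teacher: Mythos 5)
Your proof is correct and follows exactly the route the paper intends: the corollary is stated without proof as an immediate consequence of Lemma~\ref{prop:asmsum}, and your argument---specializing Conditions (1) and (2) of Definition~\ref{def:chainedASM} at $m=n$ to recover Condition (1) of Definition~\ref{def:permmatrices} for $\{0,1\}$-tuples, then matching the maximal entry sums via Lemma~\ref{prop:asmsum}---is precisely that consequence spelled out. The worry you flag about the last matrix in the linear case is a non-issue, as you yourself note: Condition (2) is stated for all $1\le\ell\le k$, so taking $\ell=k$ bounds the columns of $A^{(k)}$ directly.
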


\subsection{Enumeration of special families of chained alternating sign matrices}
\label{sec:enumASM}

In this subsection, we enumerate chained alternating sign matrices for special families of $n$ and $k$. We also present in Table~\ref{table:linearasm} 
some enumeration data for the remaining cases. 

\begin{remark}
\label{rmk:tbls}
In Table~\ref{table:linearasm}, 
we have computed data on the enumeration of chained alternating sign matrices. Note  there may not be a nice product formula for the enumeration in all cases, since, for example, $|A^{-}_{2,6}|=1129$ 
is prime and $|A^{\circ}_{2,8}|=1186=2 \times 593$ 
has a large prime factor.
But in special cases, namely, $k=1$ linear and circular, $k=4$ circular, and $k$ odd linear, we can enumerate $A_{n,k}$ using bijections to objects whose enumerations are known.
\end{remark}

\begin{table}[htbp]
\begin{tabular}{| c | c | c | c | c | c | c | }
  \hline                       
   \diaghead(1,-1)%
   {\theadfont nnn}%
   {k}{n}
   & 1 & 2 & 3 & 4 & 5 & 6 \\
  \hline
  1 & 1 & 2 & 7 & 42 & 429 & 7436 \\
  \hline  
  2 & 2 & 17 & 504 & 53932 & & \\
  \hline  
  3 & 1 & 4 & 49 & & & \\
  \hline  
  4 & 3 & 159 & 98028 & & & \\
  \hline  
  5 & 1 & 8 & & & & \\
  \hline  
  6 & 4 & 1129 & & & & \\
  \hline  
  7 & 1 & 16 & & & &\\
  \hline  
  8 & 5 & 7151 & & & &\\
  \hline
\end{tabular}
\hspace{.1in}
\begin{tabular}{| c | c | c | c | c | c | c | c |}
  \hline                       
   \diaghead(1,-1)%
   {\theadfont nnn}%
   {k}{n}
   & 1 & 2 & 3 & 4 & 5 & 6 \\
  \hline
  1 & 1  & 2 & 20 & 40 & 3430 & 6860 \\
  \hline  
  2 & 2 & 10 & 140 & 5544 & & \\
  \hline  
  3 & 3 & 14 & 3861 &  &  &  \\
  \hline  
  4 & 2 & 42 & 7436 & & &  \\
  \hline  
  5 & 5 & 82 & &  &  &  \\
  \hline  
  6 & 2 & 214 &  &  &  &  \\  
  \hline  
  7 & 7 & 478 &  &  &  &  \\  
  \hline  
  8 & 2 & 1186 &  &  &  &  \\
  \hline  
  9 & 9 & 2786 &  &  &  &  \\
  \hline
\end{tabular}
\caption{Left: Enumeration of chained linear alternating sign matrices for small values of $n$ and $k$. Right: Enumeration of chained circular alternating sign matrices for small values of $n$ and $k$.}
\label{table:linearasm}
\end{table}

\begin{proposition}
\label{prop:asm_n_k1}
$ASM_{n,1}^{-}$
is the set of $n\times n$ alternating sign matrices.
\end{proposition}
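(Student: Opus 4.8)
The plan is to unwind Definition~\ref{def:chainedASM} in the case $k=1$ with $A^{(0)}$ the zero matrix, show that the maximality requirement forces every row sum and column sum of $A^{(1)}$ to equal $1$, and then recognize conditions (1) and (2) of that definition as exactly the left-to-right and bottom-to-top partial-sum characterizations of the rows and columns of a standard alternating sign matrix.

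First I would record what the three conditions say when $k=1$ and $A^{(0)}=0$: condition (1) reads ``every partial row sum $\sum_{j=1}^m A^{(1)}_{i,j}$ lies in $\{0,1\}$'' and condition (2) reads ``every bottom-up partial column sum $\sum_{j=1}^m A^{(1)}_{n+1-j,i}$ lies in $\{0,1\}$.'' Taking $m=n$ in each shows all row sums and all column sums of $A^{(1)}$ lie in $\{0,1\}$, so the total entry sum is at most $n$; by Lemma~\ref{prop:asmsum} (or because an ordinary permutation matrix satisfies (1) and (2) and has entry sum $n$) the maximum in condition (3) is exactly $n$. Since there are $n$ rows, each contributing $0$ or $1$, and they total $n$, every row sum equals $1$; likewise every column sum equals $1$.

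Next I would invoke the classical fact that a vector $b\in\{-1,0,1\}^n$ with $\sum_j b_j=1$ has all of its left-to-right partial sums in $\{0,1\}$ if and only if its nonzero entries, in order, read $+1,-1,+1,\dots,-1,+1$. Applying this to each row via condition (1) gives the alternating-sign property along rows. For columns, condition (2) says the nonzero entries of each column, read from bottom to top, alternate $+1,-1,\dots,+1$; such a pattern is palindromic, so the same holds read top to bottom --- equivalently, if $s_m$ is a bottom-up partial column sum and $t_{n-m}$ the complementary top-down partial sum then $s_m = 1 - t_{n-m}$ since the column totals $1$, so one family lies in $\{0,1\}$ iff the other does. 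Hence $A^{(1)}$ is an $n\times n$ alternating sign matrix. Conversely, any $n\times n$ alternating sign matrix has all partial row sums and all top-down --- hence, by the same identity, all bottom-up --- partial column sums in $\{0,1\}$ and total entry sum $n$, so it satisfies (1), (2), and (3); thus it lies in $ASM_{n,1}^{-}$, and the two inclusions give the claimed equality.

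The only step needing any care is the bookkeeping in condition (2) --- reading the reversed row index $n+1-j$ as a bottom-up partial column sum and checking its equivalence with the usual top-down alternating-sign condition once the column sum is known to equal $1$. Everything else is the standard characterization of alternating sign matrices by partial row and column sums.
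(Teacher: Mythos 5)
Your proposal is correct and follows essentially the same route as the paper's (much terser) proof: maximality of condition (3) together with Lemma~\ref{prop:asmsum} forces all row and column sums to equal $1$, after which conditions (1) and (2) are exactly the partial-sum characterization of alternating sign matrices. Your extra care with the bottom-up indexing $n+1-j$ in condition (2) is a detail the paper leaves implicit, not a different argument.
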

\begin{proof}
This follows directly from the definition, since the maximality of Condition (3) of Definition~\ref{def:chainedASM} along with Lemma~\ref{prop:asmsum} implies that the rows and columns  each sum to one.
\end{proof}

We have the following corollary on the cardinality of $ASM_{n,1}^{-}$, which follows from the enumeration of alternating sign matrices~\cite{zeilberger,kuperbergASMpf}.
\begin{corollary}
$|ASM_{n,1}^{-}|=\displaystyle\prod_{k=0}^{n-1} \frac{\left(3k+1\right)!}{\left(n+k\right)!}$.
\end{corollary}

For $n=1$, we reduce to the permutation case.
\begin{proposition}
$ASM_{1,k} = P_{1,k}$
\end{proposition}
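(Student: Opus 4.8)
The plan is to show that when $n=1$, the alternating sign matrix conditions collapse to the permutation matrix conditions, so that $ASM_{1,k}$ and $P_{1,k}$ literally consist of the same $k$-tuples of matrices. The key observation is that a $1\times 1$ matrix with entries in $\{-1,0,1\}$ that satisfies Condition (1) of Definition~\ref{def:chainedASM} — namely $\sum_{j=1}^m A^{(\ell)}_{i,j}\in\{0,1\}$, which with $n=1$ reads simply $A^{(\ell)}_{1,1}\in\{0,1\}$ — cannot have a $-1$ entry at all. Thus every chained alternating sign matrix with $n=1$ has all entries in $\{0,1\}$, i.e.\ it is a $k$-tuple of $1\times 1$ $\{0,1\}$-matrices.

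Next I would verify that, with all entries in $\{0,1\}$ and $n=1$, Condition (2) of Definition~\ref{def:chainedASM} becomes exactly Condition (1) of Definition~\ref{def:permmatrices}. Indeed, for $n=1$ the only value of $m$ is $m=1$, and the sum $\sum_{j=1}^1 A^{(\ell-1)}_{1,j} + \sum_{j=1}^1 A^{(\ell)}_{1,1} = A^{(\ell-1)}_{1,1}+A^{(\ell)}_{1,1}\in\{0,1\}$, which is precisely the chaining condition $\sum_j X^{(\ell-1)}_{i,j}+\sum_j X^{(\ell)}_{j,i}\in\{0,1\}$ of Definition~\ref{def:permmatrices} specialized to $n=1$. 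The boundary conventions ($A^{(0)}$ the zero matrix in the linear case, $A^{(0)}\equiv A^{(k)}$ in the circular case) match verbatim between the two definitions. So the sets defined by Conditions (1)--(2) in each case coincide as sets of $k$-tuples.

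Finally I would invoke Condition (3) in each definition: both $ASM_{1,k}$ and $P_{1,k}$ are cut out from this common ambient set by imposing that the total sum of entries be maximal. Since the ambient sets are equal and the maximization is of the same functional over the same set, the maximizers coincide. (Alternatively, one can cite Corollary~\ref{cor:nonegative}: chained alternating sign matrices with no $-1$ entries are exactly the chained permutations, and we have just argued that for $n=1$ every chained alternating sign matrix has no $-1$ entry.) This handles both the linear and circular cases uniformly, since the only difference between them is the boundary convention for $A^{(0)}$ (resp.\ $X^{(0)}$), which is identical in the two definitions.

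I do not anticipate a genuine obstacle here; the statement is essentially a definitional unwinding, and the only point requiring a moment's care is confirming that Condition (1) of the alternating sign matrix definition forces nonnegativity in the $n=1$ case (so that the ``alternating sign'' freedom disappears entirely), after which everything else is a direct comparison of the two definitions term by term.
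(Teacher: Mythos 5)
Your proof is correct and follows essentially the same route as the paper: the key step in both is that Condition (1) of Definition~\ref{def:chainedASM} (partial row sums from the left) rules out any $-1$ entry when $n=1$, after which Corollary~\ref{cor:nonegative} finishes the argument. Your additional term-by-term comparison of the two definitions is fine but redundant given that corollary, which the paper simply cites at that point.
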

\begin{proof}
By Property $(1)$ of Definition~\ref{def:chainedASM}, no $-1$ is allowed to be in the leftmost column of any of the matrices in a chained alternating sign matrix. In $ASM_{1,k}$, the matrices each have only one column, so none may include a $-1$. Thus, the claim follows from Corollary~\ref{cor:nonegative}.
\end{proof}

For odd $k$ values, we have the following theorem.
\begin{theorem}
\label{thm:ASM_koddlin}
For $k$ odd, $ASM_{n,k}^{-}$ 
is in bijection with the set of $\frac{k+1}{2}$-tuples of $n\times n$ alternating sign matrices.
\end{theorem}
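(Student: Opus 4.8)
The plan is to show that, for $k$ odd, a $k$-chained linear alternating sign matrix is forced to have zero matrices in all even positions and ordinary $n\times n$ alternating sign matrices in all odd positions, so that the map $(A^{(1)},A^{(2)},\ldots,A^{(k)})\mapsto(A^{(1)},A^{(3)},\ldots,A^{(k)})$ is the claimed bijection. First I would pin down the vector of matrix sums: by Lemma~\ref{prop:asmsum} together with Lemma~\ref{prop:maxodd}, the composition $\bigl(\sum_{i,j}A^{(1)},\ldots,\sum_{i,j}A^{(k)}\bigr)$ of any element of $ASM_{n,k}^{-}$ must be $(n,0,n,0,\ldots,0,n)$. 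Since the $m=n$ instance of Condition~(1) of Definition~\ref{def:chainedASM} says every row of every $A^{(\ell)}$ sums to $0$ or $1$, the odd matrices then have all row sums equal to $1$ and the even matrices have all row sums equal to $0$.

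The key step is to show each even-indexed $A^{(\ell)}$ vanishes. Condition~(1) already gives that all partial row sums (from the left) of $A^{(\ell)}$ lie in $\{0,1\}$. For the column direction I would apply Condition~(2) at index $\ell$: since $\ell-1$ is odd we have $\sum_j A^{(\ell-1)}_{i,j}=1$, hence $\sum_{j=1}^m A^{(\ell)}_{n+1-j,i}\in\{-1,0\}$ for all $m$, i.e.\ every partial column sum of $A^{(\ell)}$ taken from the bottom lies in $\{-1,0\}$; taking $m=n$ gives column sums in $\{-1,0\}$, and since these sum to $0$ every column sum is $0$, so the partial column sums from the top lie in $\{0,1\}$. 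Now consider the corner sums $G_{i,j}:=\sum_{i'\le i,\ j'\le j}A^{(\ell)}_{i',j'}$ for $0\le i,j\le n$. Because all line sums of $A^{(\ell)}$ are $0$, $G$ vanishes on the whole boundary of the grid, while $G_{i,j}-G_{i-1,j}$ is a partial row sum in $\{0,1\}$ and $G_{i,j}-G_{i,j-1}$ is a partial column sum from the top in $\{0,1\}$. Fixing a column $j$, the increments $G_{i,j}-G_{i-1,j}\ge 0$ telescope to $G_{n,j}-G_{0,j}=0$, so they all vanish and $G\equiv 0$, whence $A^{(\ell)}=0$.

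With the even matrices known to be zero, I would finish by identifying the odd matrices and checking bijectivity. For $\ell$ odd, $A^{(\ell)}$ has partial row sums in $\{0,1\}$ by Condition~(1); Condition~(2) at index $\ell$ now reads $0+\sum_{j=1}^m A^{(\ell)}_{n+1-j,i}\in\{0,1\}$, using that $A^{(\ell-1)}$ is the zero matrix (by the linear convention when $\ell=1$, and by the previous paragraph when $\ell\ge 3$), so all partial column sums from the bottom lie in $\{0,1\}$; since the total sum is $n$ this forces every column sum to be $1$, and then the partial column sums from the top also lie in $\{0,1\}$. Hence $A^{(\ell)}$ satisfies the defining inequalities of an $n\times n$ alternating sign matrix (cf.\ Proposition~\ref{prop:asm_n_k1}). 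Conversely, given any $\frac{k+1}{2}$-tuple of $n\times n$ alternating sign matrices, I would place them in the odd slots with zero matrices in the even slots and verify the three conditions: Condition~(1) is immediate; Condition~(2) holds because at an odd index the first summand is $0$ and the second is a partial column sum of an alternating sign matrix, while at an even index the first summand is the row sum $1$ of an alternating sign matrix and the second is $0$; and Condition~(3) holds since the total sum is $n\cdot\frac{k+1}{2}=\frac{n(k+1)}{2}$, the maximum by Lemmas~\ref{prop:asmsum} and~\ref{prop:maxodd}. The two constructions are visibly mutually inverse, so this gives the bijection. I expect the step forcing the even matrices to vanish (the corner-sum argument) to be the only nonroutine point; the rest is unwinding the partial-sum conditions of Definition~\ref{def:chainedASM}.
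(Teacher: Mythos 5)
Your proof is correct, and its overall architecture matches the paper's: pin down the composition of matrix sums as $(n,0,n,\ldots,0,n)$ via Lemmas~\ref{prop:asmsum} and~\ref{prop:maxodd}, force the even-indexed matrices to vanish, identify the odd-indexed ones as ordinary alternating sign matrices, and observe the map is invertible. The one place you genuinely diverge is the key vanishing step. The paper argues by contradiction from a single carefully chosen entry: if an even-indexed $A^{(\ell)}$ has a $1$, take the bottommost $1$ in its leftmost nonzero column, note that Condition~(1) rules out any $-1$ below it, so the bottom-up partial column sum through that entry is $1$ and Condition~(2) forces the corresponding row of $A^{(\ell-1)}$ to sum to $0$, contradicting the total sum $n$. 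You instead run the implication in the other direction: since every row of $A^{(\ell-1)}$ sums to $1$, Condition~(2) makes all bottom-up partial column sums of $A^{(\ell)}$ nonpositive, hence all line sums zero and all left/top partial sums nonnegative, and a telescoping corner-sum argument kills the matrix. Both arguments exploit exactly the same instance of Condition~(2); yours is a global derivation rather than a local contradiction, and it has the side benefit of making explicit the partial-sum bookkeeping that the paper leaves implicit when it asserts the odd-indexed matrices ``must be alternating sign matrices'' and that the inverse construction lands in $ASM_{n,k}^{-}$. No gaps.
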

\begin{proof}
Let $k$ be odd and $A=\left(A^{(1)}, A^{(2)},\ldots,A^{(k)}\right)\in ASM_{n,k}^{-}$. By Lemmas \ref{prop:asmsum} and \ref{prop:maxodd}, $$\left(\displaystyle\sum_{i,j}A^{(1)},\displaystyle\sum_{i,j}A^{(2)},\ldots,\displaystyle\sum_{i,j}A^{(k)}\right)=(n,0,n,\dots,0,n).$$ We wish to show that each matrix of even index is an ASM and each matrix of odd index is the all zeros matrix.
  Suppose to the contrary there is an even numbered matrix that is not all zeros, say $A^{(\ell)}$. Then $A^{(\ell)}$ must have a 1. On $A^{(\ell)}$, find the leftmost column with a 1 in it and pick the bottommost 1 in this column. Say this 1 is in column $i$, row $j$. Picking such a 1 guarantees that there is no $-1$ below it in column $i$.  Therefore, row $i$ on $A^{(\ell-1)}$ must sum to 0. As a result, the sum of entries on $A^{(\ell-1)}$ is less than or equal to $n-1$, a contradiction. Therefore, all even indexed matrices must contain all 0s and all odd indexed matrices must be alternating sign matrices.
\end{proof}

\begin{corollary}
For $k$ odd, $|ASM_{n,k}^{-}|=\displaystyle\left(\prod_{k=0}^{n-1} \frac{\left(3k+1\right)!}{\left(n+k\right)!}\right)^{\frac{k+1}{2}}$.
\end{corollary}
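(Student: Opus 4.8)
The plan is to derive this directly from the structural result of Theorem~\ref{thm:ASM_koddlin} together with the known enumeration of ordinary alternating sign matrices. First I would recall that, for $k$ odd, Theorem~\ref{thm:ASM_koddlin} gives a bijection between $ASM_{n,k}^{-}$ and the set of $\frac{k+1}{2}$-tuples of $n\times n$ alternating sign matrices: under this bijection the chained matrix $(A^{(1)},\ldots,A^{(k)})$ corresponds to the tuple of its odd-indexed blocks $(A^{(1)},A^{(3)},\ldots,A^{(k)})$, each of which ranges freely over all $n\times n$ ASMs while every even-indexed block is forced to be the zero matrix.

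Next I would count these tuples. Choosing a $\frac{k+1}{2}$-tuple of $n\times n$ ASMs amounts to $\frac{k+1}{2}$ independent choices, each from the set of $n\times n$ alternating sign matrices, which by Proposition~\ref{prop:asm_n_k1} is exactly $ASM_{n,1}^{-}$; hence the number of such tuples is $\bigl(|ASM_{n,1}^{-}|\bigr)^{\frac{k+1}{2}}$. Substituting the value $|ASM_{n,1}^{-}| = \prod_{k=0}^{n-1}\frac{(3k+1)!}{(n+k)!}$, established above from the proof of the Mills--Robbins--Rumsey conjecture~\cite{zeilberger,kuperbergASMpf}, yields $|ASM_{n,k}^{-}| = \left(\prod_{k=0}^{n-1}\frac{(3k+1)!}{(n+k)!}\right)^{\frac{k+1}{2}}$, as claimed.

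Since the statement is an immediate consequence of an already-proved bijection and an already-cited enumeration, there is no real obstacle; the only point requiring a moment's care is purely notational, namely that the index $k$ appearing as a bound variable in the ASM product formula is distinct from the parameter $k$ of $ASM_{n,k}^{-}$. As this shadowing is already present in the preceding corollary, I would leave the notation unchanged for consistency.
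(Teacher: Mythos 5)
Your proof is correct and follows exactly the route the paper intends: the corollary is an immediate consequence of the bijection in Theorem~\ref{thm:ASM_koddlin} together with the alternating sign matrix enumeration of~\cite{zeilberger,kuperbergASMpf}, which is why the paper states it without further argument. Your remark about the shadowed index $k$ in the product is a fair observation but does not affect correctness.
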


We make two more observations about chained circular alternating sign matrices for particular values of $n$ or $k$.

\begin{proposition}
\label{prop:ASMk4}
$ASM_{n,4}^{\circ}$
is in bijection with the set of $2n \times 2n$ alternating sign matrices.
\end{proposition}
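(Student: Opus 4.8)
The plan is to build an explicit bijection between $ASM^{\circ}_{n,4}$ and $ASM_{2n,1}^{-}$, the set of $2n\times 2n$ alternating sign matrices, mirroring the block-assembly already observed for permutations in the remark that $P_{n,4}^{\circ}$ corresponds to permutations of $2n$. First I would record, via Lemma~\ref{prop:asmsum} together with the $k$ even case of Lemma~\ref{prop:maxeven}, that the composition of entry-sums of any $A=(A^{(1)},A^{(2)},A^{(3)},A^{(4)})\in ASM^{\circ}_{n,4}$ is $(n-j,j,n-j,j)$ for some $0\le j\le n$; in particular adjacent matrices have entry-sums totaling exactly $n$. Then, given $A$, I would assemble the $2n\times 2n$ matrix
\[
M = \begin{pmatrix} A^{(1)} & R(A^{(2)}) \\ C(A^{(4)}) & \rho(A^{(3)}) \end{pmatrix}
\]
where $R$, $C$, $\rho$ are the appropriate reflections/rotations (some of $A^{(2)}$ and $A^{(4)}$ must be flipped so that the chaining condition of Definition~\ref{def:chainedASM}(2) — which pairs rows of $A^{(\ell-1)}$ with columns of $A^{(\ell)}$ read bottom-to-top — becomes the ordinary ``rows and columns of $M$ sum to $1$ and signs alternate'' condition across the seam between the left and right block-columns). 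The precise dihedral transformation applied to each block is exactly what Figures~\ref{fig:k6n4circperm} and~\ref{fig:k4n3ASMcirc} illustrate at the permutation and ASM level; I would fix the four transformations once and for all by checking them on a small example such as $ASM^{\circ}_{2,4}$ before proving the general statement.

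The key steps in order: (i) show $M$ has all entries in $\{-1,0,1\}$ — immediate since each block does; (ii) show every row and column of $M$ sums to $1$ — for a row passing through $A^{(1)}$ and $R(A^{(2)})$, this is Definition~\ref{def:chainedASM}(2) for $\ell=2$ applied with $m=n$, which says the row-sum of $A^{(1)}$ plus the full column-sum of $A^{(2)}$ (in the appropriate orientation) is in $\{0,1\}$; combined with the partial-sum conditions and the fact that total sums force $(n-j)+j=n$, equality to $1$ follows; the other three families of rows/columns are handled by the analogous instances of Definition~\ref{def:chainedASM}(1) and (2) for $\ell=1,3,4$ (with the circular convention $A^{(0)}\equiv A^{(4)}$); (iii) show the nonzero entries alternate in sign along each row and column of $M$ — within a single block this is inherited from Definition~\ref{def:chainedASM}(1) (which is precisely the ``alternating partial sums lie in $\{0,1\}$'' condition on each $A^{(\ell)}$), and across the seam it is exactly the content of Definition~\ref{def:chainedASM}(2), whose partial-sum form $\sum_j A^{(\ell-1)}_{i,j} + \sum_{j=1}^m A^{(\ell)}_{n+1-j,i}\in\{0,1\}$ is designed to say that the concatenated sequence of nonzero entries across the boundary still alternates and starts/ends correctly; (iv) exhibit the inverse map by slicing a $2n\times 2n$ ASM into its four $n\times n$ quadrants and undoing the reflections, and check that conditions (1)–(3) of Definition~\ref{def:chainedASM} are recovered (maximality of the total sum is automatic, since a $2n\times 2n$ ASM has total entry-sum $2n = \lfloor n\cdot 4/2\rfloor$, matching Lemma~\ref{prop:maxeven}).

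The main obstacle I expect is \textbf{pinning down the correct orientations of the four blocks} so that Definition~\ref{def:chainedASM}(2), with its ``$n+1-j$'' bottom-to-top indexing on the columns of $A^{(\ell)}$, translates cleanly into the ordinary ASM seam condition — a sign or reflection error here is easy to make and would break the alternation check in step (iii) at the boundary between blocks, particularly at the ``wrap-around'' seam between $A^{(4)}$ and $A^{(1)}$ (top-right block column meeting left block column) where the circular convention $A^{(0)}\equiv A^{(4)}$ enters. Everything else is a routine verification that each of conditions (1)–(3) of Definition~\ref{def:chainedASM} is equivalent, block by block and seam by seam, to the defining conditions of a $2n\times 2n$ alternating sign matrix; once the orientations are fixed correctly the equivalence is essentially a matter of reading the two definitions side by side.
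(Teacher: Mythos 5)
Your proposal is correct and follows essentially the same route as the paper: both cite Lemmas~\ref{prop:asmsum} and~\ref{prop:maxeven} to get the composition $(n-\alpha,\alpha,n-\alpha,\alpha)$, assemble the four blocks into a $2n\times 2n$ matrix with dihedral transformations chosen so that Definition~\ref{def:chainedASM}(2) becomes the ordinary ASM row/column and alternation conditions, and invert by slicing into quadrants. The only detail you leave open --- the exact transformations --- is resolved in the paper as a quarter turn clockwise for $A^{(2)}$, a half turn for $A^{(3)}$, and a quarter turn counterclockwise for $A^{(4)}$, exactly the kind of check you propose to carry out on a small example.
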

\begin{proof}
Let $A=\left(A^{(1)}, A^{(2)},A^{(3)},A^{(4)}\right)\in ASM_{n,4}^{\circ}$.
By Lemmas \ref{prop:asmsum} and \ref{prop:maxeven}, $$\left(\displaystyle\sum_{i,j}A^{(1)},\displaystyle\sum_{i,j}A^{(2)},\displaystyle\sum_{i,j}A^{(3)},\displaystyle\sum_{i,j}A^{(4)}\right)=(n-\alpha,\alpha,n-\alpha,\alpha)$$ for some $0\leq \alpha\leq n$.
In particular, $\displaystyle\sum_{i,j}A^{(\ell-1)}+\displaystyle\sum_{i,j}A^{(\ell)}=n$ for all $1\leq \ell\leq 4$ implies that $\displaystyle\sum_{j=1}^n A^{(\ell-1)}_{i,j} + \displaystyle\sum_{j=1}^n A^{(\ell)}_{n+1-j,i}=1$ for all $1\leq \ell\leq 4$. We form a $2n\times 2n$ matrix $M$ by concatenating $A^{(1)},\ldots,A^{(4)}$ as follows: let $M_{ij}=A^{(1)}_{ij}$ for $1\leq i,j\leq n$. Let $M_{ij}$ for $1\leq i\leq n$ and $n+1\leq j\leq 2n$ be the entries of $A^{(2)}$ rotated a quarter turn clockwise. Let $M_{ij}$ for $n+1\leq i,j\leq 2n$ be the entries of $A^{(3)}$ rotated a half turn. Finally,  let $M_{ij}$ for $n+1\leq i\leq 2n$ and $1\leq j\leq n$ be the entries of $A^{(4)}$ rotated a quarter turn counterclockwise. $M$ is an alternating sign matrix, since, by construction, the rows and columns each sum to 1, and by part $(2)$ of Definition~\ref{def:chainedASM}, the nonzero entries alternate in sign across each row or column.
This construction is clearly invertible and is thus a bijection.
\end{proof}

\begin{corollary}
$|ASM_{n,4}^{\circ}|=\displaystyle\prod_{k=0}^{2n-1} \frac{\left(3k+1\right)!}{\left(2n+k\right)!}$.
\end{corollary}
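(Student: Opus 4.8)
The plan is to deduce this immediately from the bijection just established. By Proposition~\ref{prop:ASMk4}, the set $ASM_{n,4}^{\circ}$ is in bijection with the set of $2n \times 2n$ alternating sign matrices, so $|ASM_{n,4}^{\circ}|$ equals the number of $2n\times 2n$ alternating sign matrices.

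Next I would invoke the enumeration of alternating sign matrices due to Zeilberger and Kuperberg~\cite{zeilberger,kuperbergASMpf}, which states that the number of $m\times m$ alternating sign matrices is $\prod_{j=0}^{m-1} \frac{(3j+1)!}{(m+j)!}$. Applying this with $m = 2n$ yields
\[
|ASM_{n,4}^{\circ}| = \prod_{j=0}^{2n-1} \frac{(3j+1)!}{(2n+j)!},
\]
which is the claimed formula (after the harmless renaming of the product index).

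There is no real obstacle here: all the substantive work was carried out in Proposition~\ref{prop:ASMk4}, where the bijection with honest $2n\times 2n$ alternating sign matrices was constructed and verified (rows and columns summing to $1$, and the alternating-sign condition following from part $(2)$ of Definition~\ref{def:chainedASM}). This corollary is simply the composition of that bijection with the known ASM count, so the proof is a one-line citation.
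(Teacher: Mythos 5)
Your proof is correct and matches the paper's (implicit) argument exactly: the corollary follows by composing the bijection of Proposition~\ref{prop:ASMk4} with the Zeilberger--Kuperberg enumeration of $m\times m$ alternating sign matrices at $m=2n$. Nothing more is needed.
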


Chained circular alternating sign matrices with $k=1$ are related to a \emph{symmetry class} of alternating sign matrices, namely, quarter-turn symmetric alternating sign matrices. These are alternating sign matrices that are invariant under $90\degree$ rotation. Many symmetry classes of alternating sign matrices, including quarter-turn symmetric, are enumerated by nice product formulas; see~\cite{Kuperberg}. See Figures~\ref{fig:QTASM} and \ref{fig:12x12matrix} for an example related to the following proposition.

\begin{proposition}
\label{prop:ASMn1circ}
$ASM_{n,1}^{\circ}$ with $n$ even
is in bijection with the set of quarter-turn symmetric alternating sign matrices of size $2n \times 2n$.
\end{proposition}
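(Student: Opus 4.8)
The plan is to reuse the concatenation construction from the proof of Proposition~\ref{prop:ASMk4}, specialized so that the four blocks coincide up to rotation. Given $A=A^{(1)}\in ASM_{n,1}^{\circ}$, build the $2n\times 2n$ matrix $M$ whose top-left $n\times n$ block is $A$, whose top-right block is $A$ rotated a quarter turn clockwise, whose bottom-right block is $A$ rotated a half turn, and whose bottom-left block is $A$ rotated a quarter turn counterclockwise. By construction $M$ is invariant under a quarter turn. Conversely, since $2n\equiv 0\pmod 4$ when $n$ is even, every orbit of the quarter-turn rotation on the cells of a $2n\times 2n$ grid has size exactly four and meets each quadrant once, so a quarter-turn symmetric $2n\times 2n$ matrix is recovered from its top-left quadrant precisely in the above way. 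Thus the two maps $A\mapsto M$ and ``take the top-left quadrant'' are mutually inverse, and the proposition reduces to the claim that $M$ is an alternating sign matrix if and only if $A\in ASM^{\circ}_{n,1}$.

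For the forward direction I would first unwind the rotated blocks: row $i$ of $M$ (for $1\le i\le n$) is [row $i$ of $A$] followed by [column $i$ of $A$ read bottom-to-top], and column $j$ of $M$ (for $1\le j\le n$) is [column $j$ of $A$ read top-to-bottom] followed by [row $j$ of $A$ read right-to-left]. Writing $r_i$ and $c_i$ for the $i$th row and column sums of $A$, part (1) of Definition~\ref{def:chainedASM} says the partial sums along row $i$ of $A$ lie in $\{0,1\}$, and part (2), with the circular identification $A^{(0)}\equiv A$, says $r_i+\sum_{l=1}^m A_{n+1-l,i}\in\{0,1\}$ for all $m$; together these say exactly that every partial sum along row $i$ of $M$, except possibly the full row sum $r_i+c_i$, lies in $\{0,1\}$. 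This is where maximality enters: by Lemmas~\ref{prop:asmsum} and~\ref{prop:maxeven} the total sum of $A$ is $\lfloor n/2\rfloor=n/2$ (using that $n$ is even), so $\sum_{i=1}^n(r_i+c_i)=2\sum_{i,j}A_{i,j}=n$; since each $r_i+c_i\in\{0,1\}$, in fact $r_i+c_i=1$ for every $i$. A routine complementation (a partial sum from one end equals the total minus a partial sum from the other end) then promotes parts (1)--(2) to the statement that every partial sum along each of the first $n$ rows and first $n$ columns of $M$ lies in $\{0,1\}$ and each such row and column sums to $1$, i.e.\ the nonzero entries alternate in sign. Finally the quarter-turn symmetry identifies row $n+i$ of $M$ with column $n+1-i$ of $M$ and column $n+j$ of $M$ with row $n+1-j$ of $M$, so the remaining rows and columns inherit the property and $M$ is an ASM. The reverse direction runs the same equivalences backwards: the top-left quadrant of a quarter-turn symmetric $2n\times 2n$ ASM visibly satisfies (1) and (2), and its entries sum to $2n/4=n/2$, which is the maximum by Lemmas~\ref{prop:asmsum} and~\ref{prop:maxeven}, so it satisfies (3) and lies in $ASM^{\circ}_{n,1}$.

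The step I expect to take the most care is matching up the constraints across the three quadrant interfaces — in particular verifying that conditions (1)--(2) on $A$, which speak only about rows of $A$ and about bottom-up partial column sums of $A$, also control the partial column sums of $M$; the identity $r_i+c_i=1$, and hence the maximality hypothesis together with the parity restriction that $n$ is even, is exactly what makes this go through. The rest is the standard dictionary between ``partial sums in $\{0,1\}$'' and ``alternating signs,'' together with the fact that the ASM conditions are preserved by a quarter-turn rotation.
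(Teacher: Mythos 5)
Your proposal is correct and follows essentially the same route as the paper: form the $2n\times 2n$ matrix from four rotated copies of $A$ as in Proposition~\ref{prop:ASMk4}, use Lemmas~\ref{prop:asmsum} and~\ref{prop:maxeven} to get total sum $\frac{n}{2}$ and hence $r_i+c_i=1$ for every $i$, and deduce the alternating-sign property from parts (1)--(2) of Definition~\ref{def:chainedASM}. You simply spell out the partial-sum bookkeeping (the complementation step and the quarter-turn identification of the remaining rows and columns) that the paper's proof leaves implicit.
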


\begin{proof}
Let $A=\left(A^{(1)}\right)\in ASM_{n,1}^{\circ}$.
By Lemmas \ref{prop:asmsum} and \ref{prop:maxeven}, $\displaystyle\sum_{i,j}A^{(1)}=\frac{n}{2}$.
In particular, $\displaystyle\sum_{j=1}^n A^{(1)}_{i,j} + \displaystyle\sum_{j=1}^n A^{(1)}_{n+1-j,i}=1$ for all $1\leq i\leq n$. Form a $2n\times 2n$ matrix $M$ by the same construction as in Proposition~\ref{prop:ASMk4}, using four copies of $A^{(1)}$. $M$ is quarter-turn symmetric  by construction and is an alternating sign matrix since the rows and columns each sum to 1, and by part $(2)$ of Definition~\ref{def:chainedASM}, the nonzero entries alternate in sign across each row or column.
This construction is clearly invertible and is thus a bijection.
\end{proof}

\begin{corollary}
\label{cor:qtasm}
\scalebox{.96}{$|ASM_{2m,1}^{\circ}|=\displaystyle\left(\prod_{k=0}^{m-1} \frac{\left(3k+1\right)!}{\left(m+k\right)!}\right)^3 \  \prod_{i=1}^{m}\left(\frac{3i-1}{3i-2}\prod_{j=i}^{m}\frac{m+i+j-1}{2i+j-1}\right)$.}
\end{corollary}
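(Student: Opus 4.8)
The plan is to combine Proposition~\ref{prop:ASMn1circ} with the known product formula for the enumeration of quarter-turn symmetric alternating sign matrices. By Proposition~\ref{prop:ASMn1circ}, for $n=2m$ even, $ASM_{2m,1}^{\circ}$ is in bijection with quarter-turn symmetric alternating sign matrices of size $2n\times 2n = 4m\times 4m$. So the task reduces to looking up (and correctly specializing) the enumeration formula for quarter-turn symmetric ASMs of order divisible by $4$. Kuperberg~\cite{Kuperberg} proves that the number of quarter-turn symmetric ASMs of size $4m\times 4m$ is
\[
A_{\mathrm{QT}}(4m)=A_{\mathrm{QT}}(4m-1)^{?}\cdots
\]
— more precisely, one uses the standard factorization $A_{\mathrm{QT}}(4m)=A_{\mathrm{HT}}(2m)\,A(m)^2$, where $A(m)=\prod_{k=0}^{m-1}\frac{(3k+1)!}{(m+k)!}$ is the ordinary ASM number and $A_{\mathrm{HT}}(2m)$ is the half-turn symmetric ASM number of order $2m$. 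First I would cite the relevant formula from~\cite{Kuperberg} for $A_{\mathrm{HT}}(2m)$ and for $A(m)$; then the claimed expression is obtained by substituting these in.

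The main work is then purely algebraic: verifying that
\[
A(m)\cdot A_{\mathrm{HT}}(2m)
=\left(\prod_{k=0}^{m-1}\frac{(3k+1)!}{(m+k)!}\right)^{3}\prod_{i=1}^{m}\left(\frac{3i-1}{3i-2}\prod_{j=i}^{m}\frac{m+i+j-1}{2i+j-1}\right).
\]
Since $A_{\mathrm{QT}}(4m)=A_{\mathrm{HT}}(2m)A(m)^2$, and one factor of $A(m)$ is already visible inside the cube $\left(\prod \frac{(3k+1)!}{(m+k)!}\right)^3 = A(m)^3$, what must be checked is that $A_{\mathrm{HT}}(2m)=A(m)\cdot\prod_{i=1}^m\left(\frac{3i-1}{3i-2}\prod_{j=i}^m\frac{m+i+j-1}{2i+j-1}\right)$, i.e. that the trailing product equals the ratio $A_{\mathrm{HT}}(2m)/A(m)$, which is itself the known "vertically symmetric / other half" factor. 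I would carry this out by writing the standard product form of $A_{\mathrm{HT}}(2m)$ from Kuperberg or Robbins, cancelling the common $A(m)$, and rearranging the double product over $i\le j\le m$ into the stated shifted-factorial form; this is routine telescoping of ratios of factorials but must be done carefully to get the index ranges exactly right.

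The hard part will be bookkeeping: matching conventions for the quarter-turn ASM count (several sources index by the order $N=4m$ versus by $m$, and some state the formula only for $N\equiv0\pmod 4$ versus $N\equiv 2\pmod 4$), and making sure the specialization $N=4m$ is the correct one here — which it is, since $2n=4m$ is always divisible by $4$ when $n=2m$. Once the correct source formula is pinned down, the remainder is a direct substitution and simplification, so I would present it as: (i) invoke Proposition~\ref{prop:ASMn1circ}; (ii) quote the $A_{\mathrm{QT}}(4m)$ formula from~\cite{Kuperberg}; (iii) perform the factorial manipulation to bring it into the displayed form; and conclude.
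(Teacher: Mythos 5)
Your proposal is correct and follows essentially the same route as the paper: invoke Proposition~\ref{prop:ASMn1circ} to reduce to quarter-turn symmetric ASMs of size $4m\times 4m$, then quote Kuperberg's enumeration. The factorization $A_{\mathrm{QT}}(4m)=A_{\mathrm{HT}}(2m)A(m)^2$ together with $A_{\mathrm{HT}}(2m)=A(m)^2\prod_{k=0}^{m-1}\frac{3k+2}{3k+1}$ and the identity $A(m)=\prod_{1\leq i\leq j\leq m}\frac{m+i+j-1}{2i+j-1}$ does yield exactly the displayed product, so the remaining algebra you outline goes through.
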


\begin{proof}
This follows from Proposition~\ref{prop:ASMn1circ} and the enumeration of quarter-turn symmetric alternating sign matrices due to Kuperberg~\cite{Kuperberg}.
\end{proof}

\begin{figure}[htbp]
\centering
\includegraphics[scale=.85]{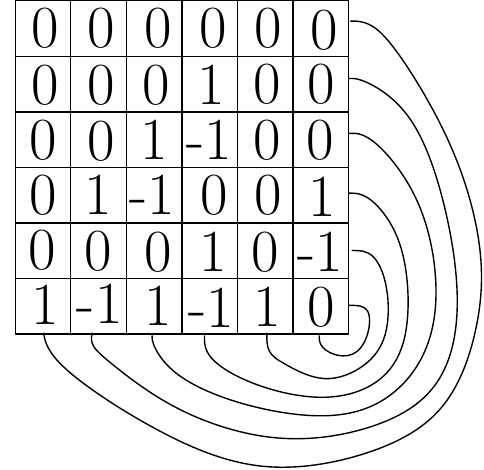}
\caption{A chained alternating sign matrix in $ASM^{\circ}_{6,1}$.}
\label{fig:QTASM}
\end{figure}

\begin{figure}[htbp]
\centering
$\left[\begin{array}{rrrrrrrrrrrr}
0 & 0 & 0 & 0 & 0 & 0 & 1 & 0 & 0 & 0 & 0 & 0 \\
0 & 0 & 0 & 1 & 0 & 0 & -1 & 0 & 1 & 0 & 0 & 0\\
0 & 0 & 1 & -1 & 0 & 0 & 1 & 0 & -1 &1 & 0 & 0\\
0 & 1 & -1 & 0 & 0 & 1 & -1 & 1 & 0 & -1 & 1 & 0\\
0 & 0 & 0 & 1 & 0 & -1 & 1 & 0 & 0 & 0 & 0 & 0\\
1 & -1 & 1 & -1 & 1 & 0 & 0 & -1 & 1 & 0 & 0 &  0\\
0 & 0 & 0 & 1 & -1 & 0 & 0 & 1 & -1 & 1 & -1 & 1\\
0 & 0 & 0 & 0 & 0 & 1 & -1 & 0 & 1 & 0 & 0 & 0\\
0 & 1 & -1 & 0 & 1 & -1 & 1 & 0 & 0 & -1 & 1 & 0\\
0 & 0 & 1 & -1 & 0 & 1 & 0 & 0 & -1 & 1 & 0 & 0\\
0 & 0 & 0 & 1 & 0 & -1 & 0 & 0 & 1 & 0 & 0 & 0\\
0 & 0 & 0 & 0 & 0 & 1 & 0 & 0 & 0 & 0 & 0 & \enskip 0
\end{array}\right]$
\caption{The $12 \times 12$ quarter-turn symmetric alternating sign matrix corresponding to the chained alternating sign matrix of Figure~\ref{fig:QTASM}.}
\label{fig:12x12matrix}
\end{figure}

\subsection{Chained alternating sign matrix bijections}
\label{sec:bijASM}
In the spirit of~\cite{manyfaces}, we transform chained alternating sign matrices into other forms, namely, the analogues of monotone triangles, square ice configurations, and fully-packed loops. 
We concentrate on the circular case, since in the linear case there may be some negative one entries in the top row (see Figure~\ref{fig:k2n3ASMlin}), which would cause complications in or failures of these constructions. 

\begin{definition}
\label{def:mt} 
Let $k$ be even and $A=\left(A^{(1)},A^{(2)},\ldots,A^{(k)}\right)\in ASM_{n,k}^{\circ}$. 
For each pair of matrices $\left(A^{(2\ell-1)},A^{(2\ell)}\right)$, consider the $n\times 2n$ matrix $B^{(\ell)}$ defined by concatenating $A^{(2\ell-1)}$ with the quarter turn clockwise rotation of $A^{(2\ell)}$. 
We then apply the standard monotone triangle map to each $B^{(\ell)}$ to create an array of numbers 
$M^{(\ell)}$. 
Namely, let the entries in in row $m$ of $M^{(\ell)}$ be all the $j$ such that the column partial sum $\displaystyle\sum_{i=1}^{m} B^{(\ell)}_{i,j}$ is equal to $1$. Order entries in each row of $M^{(\ell)}$ to be increasing. 
We call
$\left(M^{(1)},M^{(2)},\ldots,M^{(\frac{k}{2})}\right)$
the \emph{chained monotone triangle} corresponding to the chained alternating sign matrix $\left(A^{(1)},A^{(2)},\ldots,A^{(k)}\right)$.
\label{nx2nASM}
\end{definition}

\begin{example}
\label{ex:Bell}
The $4 \times 8$ matrices $B^{(1)},B^{(2)},B^{(3)}$ from Definition~\ref{nx2nASM} that correspond to the chained alternating sign matrix of Figure~\ref{fig:k4n3ASMcirc} are:

\vspace{1ex}
\begin{center}
\scalebox{.73}{
$\left[\begin{array}{rrrrrrrr} 0&1&\; 0&\; 0&\; 0&0&\; 0&\; 0\\1&-1&0&0&0&1&0&0\\0&0&1&0&0&-1&1&0\\0&0&0&0&1&0&0&0 \end{array}\right] \; \left[\begin{array}{rrrrrrrr} 0&\;0&1&0&\;0&\;0&\;0&\;0\\0&0&0&1&0&0&0&0\\1&0&-1&0&0&1&0&0\\0&0&1&-1&1&0&0&0 \end{array} \right] \; \left[\begin{array}{rrrrrrrr} 0&\;0&0&0&\;0&1&\;0&\;0\\0&0&1&0&0&-1&1&0\\0&1&-1&1&0&1&0&0\\0&0&1&-1&1&0&0&0 \end{array} \right]$}.
\end{center}

\vspace{1ex}
Then the corresponding chained monotone triangle is:

\vspace{1ex}
\begin{center}
$\left(\begin{array}{lllllll}
&&&2&&& \\
&&1&&6&& \\
&1&&3&&7& \\
1&&3&&5&&7
\end{array} \ , \
\begin{array}{lllllll}
&&&3&&& \\
&&3&&4&& \\
&1&&4&&6& \\
1&&3&&5&&8
\end{array} \ , \
\begin{array}{lllllll}
&&&6&&& \\
&&3&&7&& \\
&2&&4&&7& \\
2&&3&&5&&7
\end{array}\right)$.
\end{center}
\end{example}

\vspace{1ex}
Chained monotone triangles may be described without reference to chained alternating sign matrices as follows. We first need the following definition.

\begin{definition}
\label{def:gt}
A \emph{Gelfand-Tsetlin pattern} of \emph{order} $n$ is a triangular array $t_{ij}$ with $1\leq i\leq n$, $1\leq j\leq i$, such that $t_{i+1,j}\leq t_{ij}\leq t_{i+1,j+1}$. A Gelfand-Tsetlin pattern is \emph{strict} if, in addition, $t_{ij} < t_{i,j+1}$.
\end{definition}

\begin{theorem}
\label{prop:mt} Let $k$ be even.
$\left(M^{(1)},M^{(2)},\ldots,M^{(\frac{k}{2})}\right)$ is a chained monotone triangle corresponding to a chained alternating sign matrix in $ASM_{n,k}^{\circ}$ if and only if:
\begin{enumerate}
\item Each $M^{(\ell)}$ is a strict Gelfand-Tsetlin pattern of order $n$, and
\item For any $1\leq\ell\leq\frac{k}{2}$, there is no number $i\leq n$ such that the following are both true:
\begin{itemize} \item $i$ appears in the largest row of $M^{(\ell)}$, and 
\item $2n-i+1$ appears in the largest row of $M^{(\ell-1)}$, where we consider $M^{(0)}\equiv M^{(\frac{k}{2})}$.
\end{itemize}
\end{enumerate}
\end{theorem}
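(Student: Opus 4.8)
The plan is to characterize the image of the chained-alternating-sign-matrix-to-chained-monotone-triangle map by analyzing the two ingredients of Definition~\ref{def:mt} separately: the standard ASM/monotone-triangle correspondence applied to each $n\times 2n$ block $B^{(\ell)}$, and the chaining condition (2) of Definition~\ref{def:chainedASM} translated through the quarter-turn rotations. First I would recall the classical fact that the map sending an $n\times m$ matrix with all partial column sums in $\{0,1\}$ and full column sums equal to $1$ to the array whose $m$-th row lists the columns with partial sum $1$ is a bijection onto strict Gelfand–Tsetlin patterns of order $n$ (with $m$ columns), the ``alternating sign'' condition on rows being exactly what makes the partial column sums lie in $\{0,1\}$. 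Condition (1) of Definition~\ref{def:chainedASM} applied to $A^{(2\ell-1)}$ gives the row condition on the left half of $B^{(\ell)}$; for the right half, the quarter-turn rotation of $A^{(2\ell)}$ turns ``partial row sums of $A^{(2\ell)}$ from the right are in $\{0,1\}$'' — which is what part of condition (2), restricted to the interaction with the zero contribution, amounts to together with condition (1) — into the analogous partial-column-sum condition for $B^{(\ell)}$. The full column sum of $B^{(\ell)}$ being $1$ in every column is exactly the content of Lemma~\ref{prop:asmsum} and Lemma~\ref{prop:maxeven}: for $k$ even the composition is $(n-j,j,n-j,j,\dots)$, so adjacent matrices have sums totaling $n$, forcing $\sum_{j} A^{(\ell-1)}_{i,j} + \sum_{j} A^{(\ell)}_{n+1-j,i} = 1$ for every $i$ and every $\ell$. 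This last identity is the lynchpin, and I would establish it first as a preliminary observation.

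Next I would unravel condition (2) of Definition~\ref{def:mt}. The key identity $\sum_{j} A^{(\ell-1)}_{i,j} + \sum_{j} A^{(\ell)}_{n+1-j,i} = 1$ says that for each $i$, exactly one of ``row $i$ of $A^{(\ell-1)}$ has total sum $1$'' and ``column $i$ of $A^{(\ell)}$ has total sum $1$'' holds. In the language of the blocks: the largest (bottom) row of the monotone triangle $M^{(\ell-1)}$ records which columns of $B^{(\ell-1)}$ have full sum $1$, and the right half of $B^{(\ell-1)}$ is the clockwise rotation of $A^{(\ell)}$ — wait, more carefully, I need to track which block a given matrix $A^{(m)}$ lands in: $A^{(2\ell-1)}$ is the left half of $B^{(\ell)}$ and $A^{(2\ell)}$ (rotated) is the right half of $B^{(\ell)}$, and in the previous block $B^{(\ell-1)}$ it is $A^{(2\ell-2)}$ that sits on the right. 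The chaining that condition (2) of Definition~\ref{def:chainedASM} imposes is between $A^{(2\ell-2)}$ and $A^{(2\ell-1)}$: the rows of $A^{(2\ell-2)}$ (equivalently, after rotation, certain columns in the right half of $B^{(\ell-1)}$, namely columns $n+1,\dots,2n$ which under the quarter turn correspond to rows $n, n-1, \dots, 1$ of $A^{(2\ell-2)}$, i.e. to index $2n-i+1$) must be complementary to the columns $1,\dots,n$ of $B^{(\ell)}$ having full sum $1$, i.e. to the values $i \le n$ in the bottom row of $M^{(\ell)}$. Translating ``row $i$ of $A^{(2\ell-2)}$ has sum $1$'' into ``the column of $B^{(\ell-1)}$ at rotated position $2n-i+1$ has full sum $1$'', i.e.\ ``$2n-i+1$ appears in the bottom row of $M^{(\ell-1)}$'', and ``column $i$ of $A^{(2\ell-1)}$ has sum $1$'' into ``$i$ appears in the bottom row of $M^{(\ell)}$'', I get exactly the stated prohibition: never both. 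The ``exactly one'' becomes ``not both'' once one observes that the count of $1$'s forces the complementary halves to be genuine complements, which again is Lemma~\ref{prop:maxeven}. I would spell this index bookkeeping out carefully as the main computational step.

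Finally I would assemble the biconditional. For the forward direction: given $A \in ASM_{n,k}^{\circ}$, conditions (1) and (2) of Definition~\ref{def:chainedASM} plus maximality give, via the above, that each $B^{(\ell)}$ maps to a strict Gelfand–Tsetlin pattern of order $n$, so (1) holds, and the chaining identity gives (2). For the converse: given $(M^{(1)},\dots,M^{(k/2)})$ satisfying (1) and (2), invert the standard monotone-triangle map on each $M^{(\ell)}$ to get an $n\times 2n$ matrix $B^{(\ell)}$ with the right partial-sum and full-sum conditions; split $B^{(\ell)}$ into its left half $A^{(2\ell-1)}$ and the inverse-rotated right half $A^{(2\ell)}$; condition (1) of Definition~\ref{def:chainedASM} on each $A^{(m)}$ follows from the partial-column-sum conditions on the two halves of the relevant block; condition (2) of Definition~\ref{def:chainedASM} follows from condition (2) here together with the full-column-sum-$1$ property of each $B^{(\ell)}$ (which makes the ``not both'' of part (2) into the required ``exactly one,'' hence the sum-in-$\{0,1\}$ condition is actually always $1$); and maximality (condition (3)) holds because the total sum is $\sum_\ell n = nk/2 = \lfloor nk/2\rfloor$, the maximum from Lemma~\ref{prop:maxeven}. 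I expect the main obstacle to be precisely the index arithmetic of the quarter-turn rotation: making sure that ``row $i$ of $A^{(2\ell)}$'' corresponds to ``column $n+i$'' or ``column $2n+1-i$'' of $B^{(\ell)}$ with the correct orientation, and that the complementation in condition (2) of Definition~\ref{def:chainedASM} lines up with $i \leftrightarrow 2n-i+1$ across consecutive blocks rather than $i \leftrightarrow n-i+1$ or some off-by-one variant; a small worked example (such as Example~\ref{ex:Bell}) should be used to pin down the convention and I would cross-check against it.
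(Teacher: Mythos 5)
Your proposal is correct and follows essentially the same route as the paper's proof: condition (1) comes from applying the standard ASM/monotone-triangle correspondence to each $n\times 2n$ block $B^{(\ell)}$ (using conditions (1) and (2) of Definition~\ref{def:chainedASM} for the partial sums and Lemmas~\ref{prop:asmsum} and~\ref{prop:maxeven} for the full row sums), condition (2) is the $m=n$ case of the chaining condition translated through the quarter-turn indexing $i\leftrightarrow 2n-i+1$, and the converse is obtained by inverting the map. Your version is simply more explicit about the index bookkeeping (which you correctly flag as the delicate point) than the paper's rather terse argument.
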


\begin{proof} Let $k$ be even and  $\left(A^{(1)},A^{(2)},\ldots,A^{(k)}\right)\in ASM^{\circ}_{n,k}$. We show each $M^{(\ell)}$ is a Gelfand-Tsetlin pattern.
From Lemmas~\ref{prop:asmsum} and \ref{prop:maxeven}, we know that $\displaystyle\sum_{i,j}A^{(2\ell-1)}+\displaystyle\sum_{i,j}A^{(2\ell)}=n$.  Also, by Conditions $(1)$ and $(2)$ of Definition~\ref{def:chainedASM}, the column partial sums of $B^{(\ell)}$ are zero or one and there are $i$ columns in row $i$ of $B^{(\ell)}$ which have a partial sum from the top of one. $M^{(\ell)}$ is \emph{strict} by construction, since its rows are strictly increasing. So Condition $(1)$ is satisfied. 

Condition $(2)$ says that if the sum of column $i$ in $A^{(2\ell-1)}$ is one, then the sum of row $i$ in $A^{(2\ell-2)}$ is zero and if the sum of row $i$ in $A^{(2\ell-2)}$ is one, then the sum of column $i$ in $A^{(2\ell-1)}$ is zero. This is true by part $(2)$ of Definition~\ref{def:chainedASM}.

Given $\left(M^{(1)},M^{(2)},\ldots,M^{(\frac{k}{2})}\right)$ satisfying the above conditions, we may reconstruct the chained alternating sign matrix by inverting the map described in Definition~\ref{def:mt}. Thus, this is a bijection.
\end{proof}

We now define the chained grid graph, which we  use in the definitions of both chained ice configurations and chained fully-packed loops.

\begin{definition}
\label{def:gg}
Let $k$ be even.
Define the chained grid graph $GG_{n,k}$ as follows. Let there be \emph{interior vertices} $v_{i,j}^{(\ell)}$ for $1\leq i,j\leq n$ and $1\leq\ell \leq k$ and \emph{boundary vertices} $v_{0,j}^{(\ell)}$ and $v_{i,0}^{(\ell)}$ for all $1\leq i,j\leq n$. Let there be the following edges:
\begin{enumerate}
\item \emph{interior horizontal edges} between $v_{i,j}^{(\ell)}$ and $v_{i,j+1}^{(\ell)}$ for all $1\leq i\leq n$, $1\leq \ell\leq k$, and $1\leq j<n$, 
\item \emph{interior vertical edges} between $v_{i,j}^{(\ell)}$ and $v_{i+1,j}^{(\ell)}$ for all $1\leq j\leq n$, $1\leq \ell\leq k$, and $1\leq i<n$,
\item \emph{chaining edges} between $v_{i,n}^{(\ell)}$ and $v_{n,i}^{(\ell+1)}$ for all $1\leq i\leq n$, $1\leq \ell\leq k$, where $k+1\equiv 1$, and
\item \emph{boundary edges} between $v_{0,j}^{(\ell)}$ and $v_{1,j}^{(\ell)}$ and between $v_{i,0}^{(\ell)}$ and $v_{i,1}^{(\ell)}$ for all $1\leq i,j\leq n$, $1\leq \ell\leq k$.
\end{enumerate}
\end{definition}

\begin{figure}[htbp]
\begin{center}
\includegraphics[scale=1]{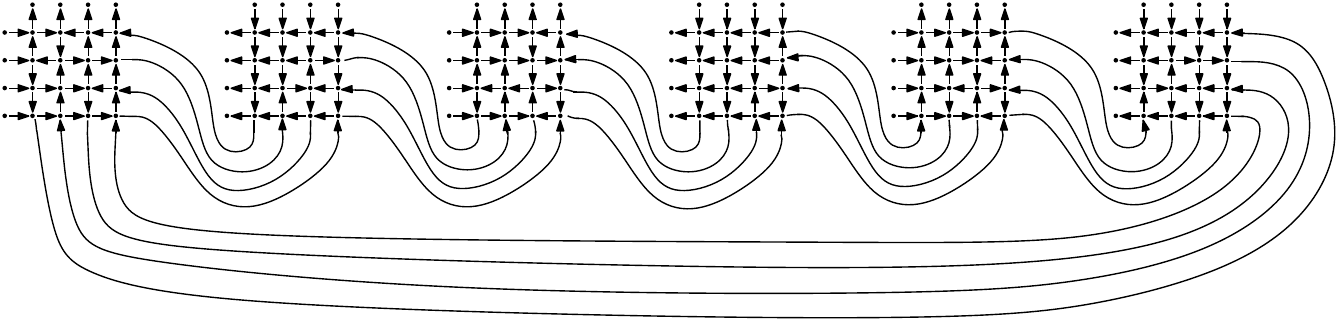}
\end{center}
\caption{A chained ice configuration corresponding to the chained alternating sign matrix of Figure~\ref{fig:k4n3ASMcirc}.}
\label{fig:chained_sq_ice}
\end{figure}

\begin{definition}
\label{def:chained_ice}
Let $k$ be even and $A\in ASM_{n,k}^{\circ}$. Then the \emph{chained  ice configuration} corresponding to $A$ is a directed graph with $GG_{n,k}$ as its underlying undirected graph and the direction of each edge determined by the following conditions.
\begin{enumerate}
\item Interior horizontal edges are directed as follows for all $1\leq i\leq n$, $1\leq j<n$, $1\leq \ell\leq \frac{k}{2}$: 
\begin{align*}
v_{i,j}^{(2\ell-1)} &\leftarrow v_{i,j+1}^{(2\ell-1)} \mbox{ if } \displaystyle\sum_{j_0=1}^j A^{(2\ell-1)}_{i,j_0}=1, \hspace{1cm} v_{i,j}^{(2\ell-1)} \rightarrow v_{i,j+1}^{(2\ell-1)} \mbox{ if } \displaystyle\sum_{j_0=1}^j A^{(2\ell-1)}_{i,j_0}=0, \\
v_{i,j}^{(2\ell)} &\rightarrow v_{i,j+1}^{(2\ell)} \mbox{ if } \displaystyle\sum_{j_0=1}^j A^{(2\ell)}_{i,j_0}=1, \hspace{1.9cm}
 v_{i,j}^{(2\ell)} \leftarrow v_{i,j+1}^{(2\ell)} \mbox{ if } \displaystyle\sum_{j_0=1}^j A^{(2\ell)}_{i,j_0}=0.
\end{align*}
\item Interior vertical edges are directed as follows for all $1\leq i< n$, $1\leq j\leq n$, $1\leq \ell\leq \frac{k}{2}$: 
\begin{align*}
v_{i,j}^{(2\ell-1)} &\leftarrow v_{i+1,j}^{(2\ell-1)} \mbox{ if }  \displaystyle\sum_{j_0=1}^n A^{(2\ell-2)}_{j,j_0} + \displaystyle\sum_{j_0=1}^{n-i} A^{(2\ell-1)}_{n+1-j_0,j}=1,\\
v_{i,j}^{(2\ell-1)} &\rightarrow v_{i+1,j}^{(2\ell-1)} \mbox{ if }  \displaystyle\sum_{j_0=1}^n A^{(2\ell-2)}_{j,j_0} + \displaystyle\sum_{j_0=1}^{n-i} A^{(2\ell-1)}_{n+1-j_0,j}=0, 
\\
v_{i,j}^{(2\ell)} &\rightarrow v_{i+1,j}^{(2\ell)} \mbox{ if }  \displaystyle\sum_{j_0=1}^n A^{(2\ell-1)}_{j,j_0} + \displaystyle\sum_{j_0=1}^{n-i} A^{(2\ell)}_{n+1-j_0,j}=1, \\ 
v_{i,j}^{(2\ell)} &\leftarrow v_{i+1,j}^{(2\ell)} \mbox{ if }  \displaystyle\sum_{j_0=1}^n A^{(2\ell-1)}_{j,j_0} + \displaystyle\sum_{j_0=1}^{n-i} A^{(2\ell)}_{n+1-j_0,j}=0.
\end{align*}
\item Chaining edges are directed as follows for all $1\leq i\leq n$, $1\leq \ell\leq \frac{k}{2}$: 
\begin{align*} v_{i,n}^{(2\ell-1)} &\leftarrow v_{n,i}^{(2\ell)} \mbox{ if } \displaystyle\sum_{j_0=1}^n A^{(2\ell-1)}_{i,j_0}=1, \hspace{1cm}
v_{i,n}^{(2\ell-1)} \rightarrow v_{n,i}^{(2\ell)} \mbox{ if } \displaystyle\sum_{j_0=1}^n A^{(2\ell-1)}_{i,j_0}=0, \\
v_{i,n}^{(2\ell)} &\rightarrow v_{n,i}^{(2\ell+1)} \mbox{ if } \displaystyle\sum_{j_0=1}^n A^{(2\ell)}_{i,j_0}=1, \hspace{1cm}
v_{i,n}^{(2\ell)} \leftarrow v_{n,i}^{(2\ell+1)} \mbox{ if } \displaystyle\sum_{j_0=1}^n A^{(2\ell)}_{i,j_0}=0.
\end{align*}
\item Boundary edges are directed as follows for all $1\leq i,j\leq n$, $1\leq \ell\leq \frac{k}{2}$: 

\begin{center}
$v_{i,0}^{(2\ell-1)}\rightarrow v_{i,1}^{(2\ell-1)}$, \   $v_{0,j}^{(2\ell-1)} \leftarrow v_{1,j}^{(2\ell-1)}$, \   $v_{i,0}^{(2\ell)}\leftarrow v_{i,1}^{(2\ell)}$, \ and  $v_{0,j}^{(2\ell)} \rightarrow v_{1,j}^{(2\ell)}$. 
\end{center}

\noindent
Call these \emph{chained domain wall boundary conditions}.
\end{enumerate}
\end{definition}

See Figure~\ref{fig:chained_sq_ice} for an example.

\vspace{1ex}
Chained ice configurations may be described without reference to chained alternating sign matrices as in the following theorem. The proof is rather technical, so we postpone it to the appendix.

\begin{theorem}
\label{prop:sqice}
A directed graph with underlying graph $GG_{n,k}$, for some $n$ and even $k$, is a chained ice configuration corresponding to a chained alternating sign matrix in $ASM_{n,k}^{\circ}$ if and only if it has chained domain wall boundary conditions ((4) in Definition~\ref{def:chained_ice}) and each interior vertex has two edges entering and two edges leaving. That is, each interior vertex is in one of the six configurations in Figure~\ref{fig:6V}.
\end{theorem}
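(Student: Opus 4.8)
The plan is to show the claimed equivalence by relating the six-vertex (ice) condition to the defining properties of chained alternating sign matrices. The forward direction is essentially built into Definition~\ref{def:chained_ice}: if the directed graph comes from some $A\in ASM_{n,k}^{\circ}$, then the boundary edges are directed by clause (4), so the chained domain wall conditions hold by fiat, and one checks that at each interior vertex $v_{i,j}^{(\ell)}$ the in/out counts are exactly two and two. This amounts to verifying, using Conditions (1) and (2) of Definition~\ref{def:chainedASM}, that the horizontal partial sums $\sum_{j_0\le j} A^{(\ell)}_{i,j_0}$ and the ``vertical'' partial sums $\sum_{j_0\le n}A^{(\ell-1)}_{j,j_0}+\sum_{j_0\le n-i}A^{(\ell)}_{n+1-j_0,j}$ lie in $\{0,1\}$, and that the difference between consecutive partial sums equals the matrix entry $A^{(\ell)}_{i,j}$ (up to the bookkeeping of which block and rotation we are in). Given that, at each interior vertex the four incident partial-sum values determine the four edge directions, and a short case analysis on whether $A^{(\ell)}_{i,j}$ equals $0$, $1$, or $-1$ shows we always land in one of the six patterns of Figure~\ref{fig:6V} — entry $0$ gives the four ``straight/turn'' vertices, entry $1$ gives the source-type vertex, entry $-1$ gives the sink-type vertex.

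For the reverse direction, I would start from an arbitrary orientation of $GG_{n,k}$ satisfying the domain-wall boundary conditions and the six-vertex rule, and recover $A$. The key is that the six-vertex condition at an interior vertex forces the horizontal edges on the two sides to ``track'' a partial sum that changes by at most one: define, for each interior horizontal edge of block $\ell$, a $\{0,1\}$-value recording its direction (with the convention flipped on even-indexed blocks so it matches the clockwise-rotation convention of Definition~\ref{def:mt}), and similarly for vertical edges; then set $A^{(\ell)}_{i,j}$ to be the discrete ``divergence'' at $v_{i,j}^{(\ell)}$, i.e.\ (edges out $-$ edges in)/something, which by the six-vertex rule is forced to be in $\{-1,0,1\}$. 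One then checks row and column partial-sum conditions (1) and (2) of Definition~\ref{def:chainedASM} hold: within a single block these are the standard ASM$\leftrightarrow$six-vertex facts applied to the $n\times 2n$ concatenated matrices $B^{(\ell)}$, and the chaining edges together with clause (3) of Definition~\ref{def:chained_ice} enforce condition (2) across the seam between $A^{(\ell-1)}$ and $A^{(\ell)}$. Maximality of the entry sum (Condition (3) of Definition~\ref{def:chainedASM}) is automatic because, as in Lemma~\ref{prop:asmsum}, any tuple of matrices satisfying (1) and (2) with the boundary conditions we have imposed has the same entry sum as the corresponding chained permutation; alternatively one can note the construction is inverse to Definition~\ref{def:chained_ice}, which only produces genuine elements of $ASM_{n,k}^{\circ}$.

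The main obstacle is the vertical-edge bookkeeping across blocks: the vertical partial sums in Definition~\ref{def:chained_ice}(2) mix a full row sum of $A^{(\ell-1)}$ with a partial column sum of $A^{(\ell)}$ read from the bottom (the $n+1-j_0$ indexing), and the rotation-by-a-quarter-turn convention differs between odd- and even-indexed blocks. Keeping the orientations, the reversed-index summation, and the two parity conventions consistent — so that ``two in, two out'' at a vertex straddling the chaining edge really does translate to $A^{(\ell)}_{i,j}\in\{-1,0,1\}$ together with the cross-block compatibility of condition (2) — is the delicate part. Concretely, the cleanest route is probably to first prove the single-block statement (an orientation of the $n\times 2n$ grid with domain-wall boundary and the six-vertex rule corresponds to an ASM-like filling of $B^{(\ell)}$, which is classical), and then argue separately that the chaining edges glue these local pictures exactly along the constraint in Definition~\ref{def:chainedASM}(2); this modular approach isolates the index-juggling to a single lemma. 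As the excerpt itself notes, the argument is technical rather than deep, so I would present it as a careful but routine verification, spelling out one representative interior vertex and one representative chaining vertex in full and leaving the symmetric cases to the reader.
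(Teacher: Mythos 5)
Your overall architecture matches the paper's: the forward direction is a case analysis on $A^{(\ell)}_{i,j}\in\{0,1,-1\}$ translating the partial-sum conditions of Definition~\ref{def:chainedASM} into the six local patterns (with the parity of $\ell$ governing which of the two ``crossing'' configurations corresponds to $+1$ versus $-1$), and the reverse direction reads a matrix entry off each vertex configuration and verifies Properties (1) and (2) by tracking how the configurations must alternate along rows and columns starting from the domain-wall boundary. Your suggestion to modularize via the single-block $n\times 2n$ statement plus a gluing lemma at the chaining edges is a reasonable reorganization of the same verification, not a different method.

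The one genuine weak point is your treatment of the maximality condition, Property (3) of Definition~\ref{def:chainedASM}. Your fallback argument --- ``the construction is inverse to Definition~\ref{def:chained_ice}, which only produces genuine elements of $ASM_{n,k}^{\circ}$'' --- is circular: it shows the map from chained ASMs to ice configurations is injective, but the reverse direction requires surjectivity onto the set of orientations satisfying the stated conditions, which is exactly what is at issue. Your primary argument, that maximality is ``automatic'' from Lemma~\ref{prop:asmsum} once (1) and (2) hold, is also not right as stated: Conditions (1) and (2) alone are satisfied by the all-zeros tuple, so maximality is not a consequence of them. What actually closes this gap (and what the paper does) is to show that the domain-wall boundary conditions at \emph{both} ends, combined with the forced alternation of configurations along each column and the direction of the chaining edge, imply that every connecting row/column pair has total sum exactly $1$ (not merely partial sums in $\{0,1\}$); summing over all $nk$ such pairs and halving gives total entry sum $\frac{nk}{2}$, which is the maximum by Lemmas~\ref{prop:asmsum} and~\ref{prop:maxeven}. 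This is a short additional step, but it must be made explicitly; ``with the boundary conditions we have imposed'' is carrying the entire load in your sentence and needs to be unpacked into this computation.
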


\begin{figure}[htbp]
\begin{center}
\includegraphics[scale=1]{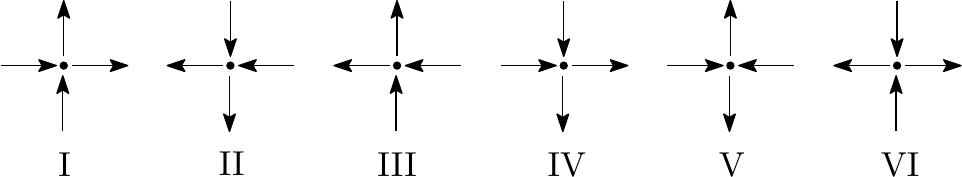}
\end{center}
\caption{The six vertex configurations of Theorem~\ref{prop:sqice}.}
\label{fig:6V}
\end{figure}

We now define the chained analogue of fully-packed loop configurations.
\begin{definition}
\label{def:fpl}
Let $k$ be even and $A\in ASM_{n,k}^{\circ}$.
Consider the corresponding chained ice configuration. Say the vertex $v_{i,j}^{(\ell)}$ has \emph{parity} equal to the parity of $i+j+\ell$. Pick the directed edges that point from an even vertex to an odd vertex; make these undirected edges in a new graph with the same vertices. We call this the \emph{chained fully-packed loop configuration} corresponding to the chained alternating sign matrix. 
\end{definition}

See Figures~\ref{fig:fpl_new} and~\ref{fig:FPL} for an example.

\begin{figure}[htbp]
\includegraphics[scale=1]{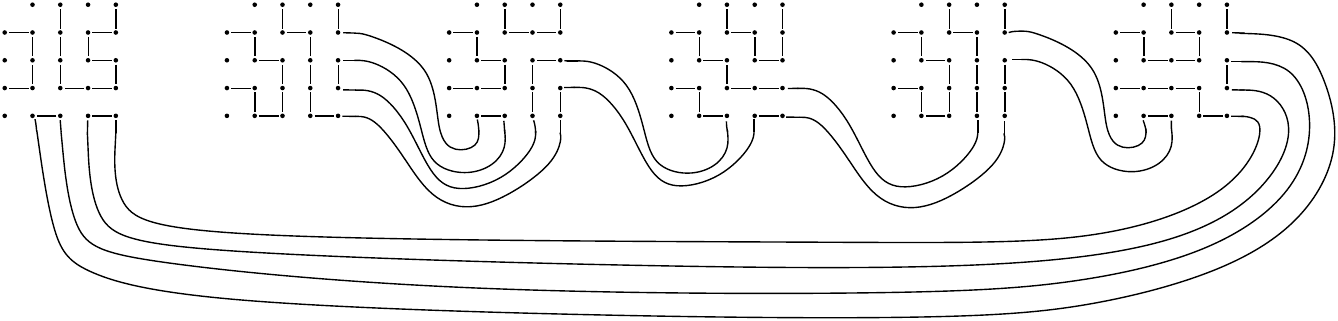}
\caption{The fully-packed loop configuration corresponding to the chained alternating sign matrix of Figure~\ref{fig:boardASM} and the chained ice configuration of Figure~\ref{fig:chained_sq_ice}.}
\label{fig:fpl_new}
\end{figure}

\begin{figure}[htbp]
\begin{center}
\includegraphics[scale=.6]{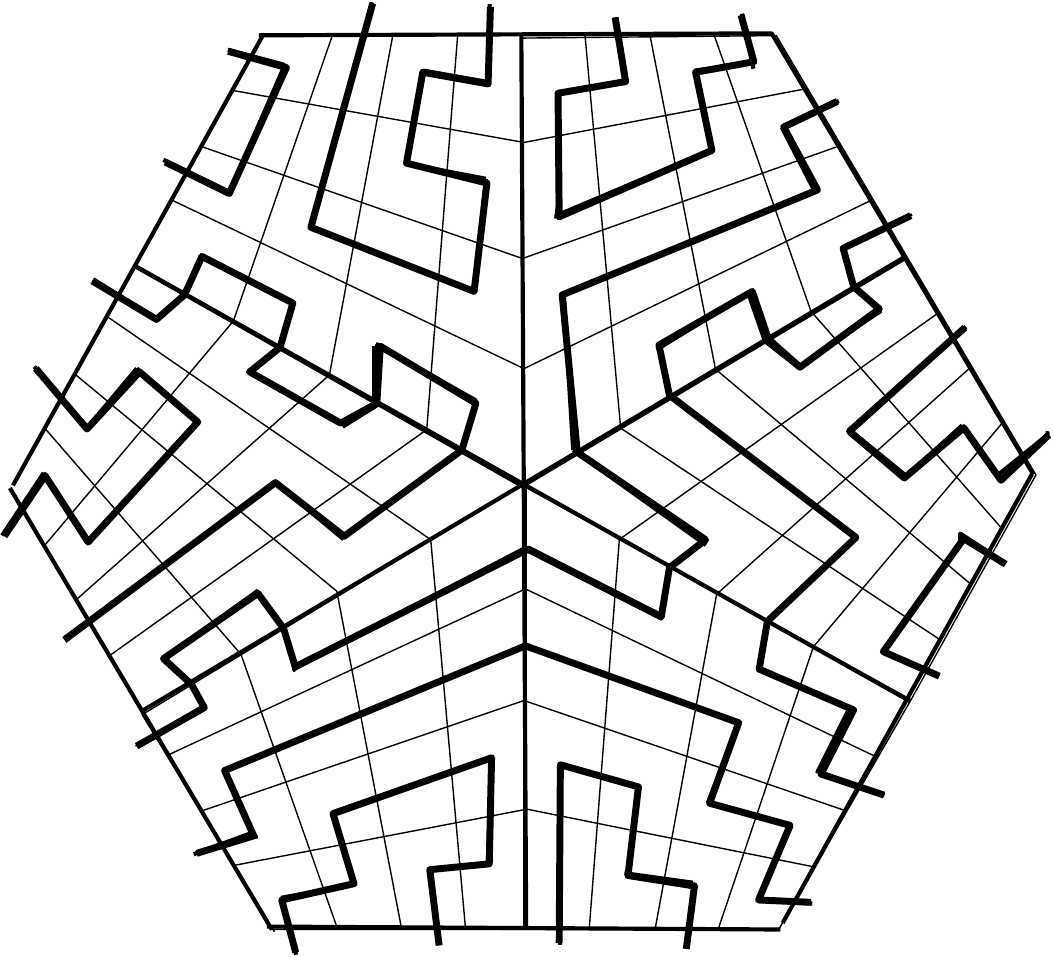}
\caption{The fully-packed loop configuration of Figure~\ref{fig:fpl_new}, drawn on the three-person chessboard of Figure~\ref{fig:3personchessboard}.}
\label{fig:FPL}
\end{center}
\end{figure}

\vspace{1ex}
Chained fully-packed loop configurations may be described without reference to chained ice configurations as follows.

\begin{theorem}
\label{prop:fpl}
A subgraph of $GG_{n,k}$ is a chained fully-packed loop configuration if and only if it contains 
the boundary edges between $v_{i,0}^{(\ell)}$ and $v_{i,1}^{(\ell)}$ whenever $i$ is odd and between $v_{0,j}^{(\ell)}$ and $v_{1,j}^{(\ell)}$ whenever $j$ is even
and its interior vertices are each adjacent to exactly two edges.
\end{theorem}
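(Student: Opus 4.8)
The plan is to deduce this characterization directly from the description of chained ice configurations in Theorem~\ref{prop:sqice} by tracking how the parity-selection rule of Definition~\ref{def:fpl} interacts with the six vertex configurations of Figure~\ref{fig:6V} and with the chained domain wall boundary conditions. First I would establish the boundary statement: a boundary edge between $v_{i,0}^{(\ell)}$ and $v_{i,1}^{(\ell)}$ joins a vertex of parity $i+\ell$ (taking the $j=0$ coordinate) with one of parity $i+1+\ell$, so exactly one endpoint is even; by the chained domain wall boundary conditions, this edge is directed from $v_{i,0}^{(\ell)}$ toward $v_{i,1}^{(\ell)}$ precisely on the odd-numbered boards (the $2\ell-1$ case), and one checks against the parity of $v_{i,0}^{(\ell)}$ that the edge is kept in the fully-packed loop configuration exactly when $i$ is odd — and similarly the top boundary edge between $v_{0,j}^{(\ell)}$ and $v_{1,j}^{(\ell)}$ is kept exactly when $j$ is even. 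This is a finite case check over the four boundary-edge types in Definition~\ref{def:chained_ice}(4) and the two parities of $\ell$.

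Next I would prove the interior-vertex condition. The key combinatorial fact is that in the chained grid graph every edge joins vertices of opposite parity (the two endpoints of any interior horizontal, interior vertical, chaining, or boundary edge differ by $1$ in exactly one coordinate, hence differ in parity), so the notion ``directed from an even vertex to an odd vertex'' is well-defined on every edge. Fix an interior vertex $v=v_{i,j}^{(\ell)}$ of parity $\varepsilon$. Each of its four incident edges is kept in the fully-packed loop configuration if and only if that edge is directed from its even endpoint to its odd endpoint. If $v$ is even, the kept edges incident to $v$ are exactly the edges leaving $v$; if $v$ is odd, they are exactly the edges entering $v$. By Theorem~\ref{prop:sqice}, $v$ has exactly two edges leaving and exactly two entering, so in either case $v$ is incident to exactly two kept edges. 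This gives the ``only if'' direction: a chained fully-packed loop configuration has the stated boundary edges and each interior vertex has degree exactly $2$.

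For the converse, suppose a subgraph $S$ of $GG_{n,k}$ contains the prescribed boundary edges and has every interior vertex of degree exactly $2$. I would reverse the construction: orient each edge of $GG_{n,k}$ by declaring, for an edge $e$ kept in $S$, that $e$ points from its even endpoint to its odd endpoint, and for an edge not in $S$, that $e$ points from its odd endpoint to its even endpoint. One then checks this orientation satisfies the chained domain wall boundary conditions (using the boundary-edge membership rule of the hypothesis together with the parity count from the first paragraph) and that every interior vertex has two edges in and two out: an even interior vertex has exactly two kept edges, all of which leave it, and its other two incident edges are not in $S$, hence point into it, and the odd case is symmetric. By Theorem~\ref{prop:sqice} this orientation is a chained ice configuration corresponding to some $A\in ASM_{n,k}^{\circ}$, and by construction applying Definition~\ref{def:fpl} to it recovers $S$.

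The main obstacle I anticipate is purely bookkeeping: getting the parity arithmetic exactly right across the four edge types and the odd/even board distinction in Definition~\ref{def:chained_ice}, especially for the chaining edges $v_{i,n}^{(\ell)} \sim v_{n,i}^{(\ell+1)}$ where the two endpoints live on different boards and have coordinate sums $i+n+\ell$ and $n+i+\ell+1$ — these differ by $1$, so the parity argument still goes through, but one must confirm the direction rules in Definition~\ref{def:chained_ice}(3) are consistent with keeping the right edges. No single step is deep; the work is in organizing the case analysis so that the boundary clause comes out as stated (``$i$ odd'' for side edges, ``$j$ even'' for top edges) rather than with the parities swapped.
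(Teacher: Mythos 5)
Your proposal is correct and follows essentially the same route as the paper: read the boundary edges off the chained domain wall boundary conditions via a parity check, use the two-in-two-out property from Theorem~\ref{prop:sqice} together with the fact that every edge of $GG_{n,k}$ joins vertices of opposite parity to conclude each interior vertex has degree two, and invert the construction for the converse. You simply spell out the parity bookkeeping and the inversion step more explicitly than the paper, which compresses the converse to ``invert the map of Definition~\ref{def:fpl}.''
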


\begin{proof}
Let $A=\left(A^{(1)},A^{(2)},\ldots,A^{(k)}\right)\in ASM_{n,k}^{\circ}$ be a chained alternating sign matrix. Let $G$ be its corresponding chained ice configuration, constructed as in Definition~\ref{def:chained_ice}, and $F$ its corresponding chained fully-packed loop, constructed as in Definition~\ref{def:fpl}. By the chained domain wall boundary conditions of $G$,  there is a directed edge from $v_{i,0}^{(\ell)}$ to $v_{i,1}^{(\ell)}$ if $\ell$ is odd (so in this case $\ell$+$i$ is even) and there is a directed edge from $v_{i,1}^{(\ell)}$ to $v_{i,0}^{(\ell)}$ if $\ell$ is even (so in this case $\ell+i+1$ is even). Thus, $F$ contains edges between $v_{i,0}^{(\ell)}$ and $v_{i,1}^{(\ell)}$ whenever $i$ is odd.
Also by the boundary conditions, there is a directed edge from $v_{1,j}^{(\ell)}$ to $v_{0,j}^{(\ell)}$ if $\ell$ is odd (so in this case $\ell+j+1$ is even) and there is a directed edge from $v_{0,j}^{(\ell)}$ to $v_{1,j}^{(\ell)}$ if $\ell$ is even (so in this case $\ell+j$ is even). Thus, $F$ contains edges between $v_{0,j}^{(\ell)}$ and $v_{1,j}^{(\ell)}$ whenever $j$ is even. By Theorem~\ref{prop:sqice}, each interior vertex of $G$ has two edges directed inward and two edges directed outward. Thus, each interior vertex of $F$ is adjacent to exactly two edges. Therefore, $F$ satisfies the conditions of the theorem.

Conversely, let $F$ be a subgraph of $GG_{n,k}$ that contains boundary edges between $v_{i,0}^{(\ell)}$ and $v_{i,1}^{(\ell)}$ whenever $i$ is odd and between $v_{0,j}^{(\ell)}$ and $v_{1,j}^{(\ell)}$ whenever $j$ is even and whose interior vertices are adjacent to exactly two edges. We may construct the corresponding chained square ice configuration by inverting the map described in Definition~\ref{def:fpl}. Thus, this is a bijection.
\end{proof}

\begin{remark}
We note that these chained fully-packed loop configurations are some of the generalized domains considered by Cantini and Sportiello in their refined proof of the Razumov-Stroganov conjecture~\cite{razstrogpf2}. Key to this proof was the fact that the action of \emph{gyration} is well-defined on these domains and rotates the \emph{link pattern} in the same way as on fully-packed loops on the square grid (proved in~\cite{wieland}). See also~\cite{razstrogrow}.
\end{remark}

\appendix
\section{Proof of Theorem \ref{prop:sqice}}
The proof of Theorem \ref{prop:sqice} is rather lengthy and technical. We opted to provide the entire proof for the sake of clarity, rather than leaving some cases to the reader.

\begin{proof}[Proof of Theorem \ref{prop:sqice}]
Let $A=\left(A^{(1)},A^{(2)},\ldots,A^{(k)}\right)\in ASM_{n,k}^{\circ}$ be a chained alternating sign matrix. Consider its corresponding chained ice configuration, constructed as in Definition~\ref{def:chained_ice}.  

Condition (4) of Definition~\ref{def:chained_ice} is the chained domain wall boundary conditions. It remains to show that 
each interior vertex has two edges entering and two edges leaving, that is, each interior vertex is in one of the configurations of Figure~\ref{fig:6V}. 

Suppose $1\leq i,j\leq n$. 
For ease of notation, since we are in $ASM_{n,k}^{\circ}$, we consider $A^{(m)}\equiv A^{(m+n)}$ for all $m$.
Consider the configuration at vertex $v_{i,j}^{(\ell)}$ for some $1\leq i,j\leq n$ and $1\leq \ell\leq k$. Denote as $N$ the edge between $v_{i,j}^{(\ell)}$ and $v_{i-1,j}^{(\ell)}$, as $S$ the edge between $v_{i,j}^{(\ell)}$ and $v_{i+1,j}^{(\ell)}$, as $E$ the edge between $v_{i,j}^{(\ell)}$ and $v_{i,j+1}^{(\ell)}$, and as $W$ the edge between $v_{i,j}^{(\ell)}$ and $v_{i,j-1}^{(\ell)}$, whenever these edges are defined. If $i=n$, then $S$ is the chaining edge between $v_{n,j}^{(\ell)}$ and $v_{j,n}^{(\ell-1)}$. If $j=n$, then $E$ is the chaining edge between $v_{i,n}^{(\ell)}$ and $v_{n,i}^{(\ell+1)}$. 

\vspace{1ex}
\textbf{Case $A_{i,j}^{(\ell)}=0$:} 

If $1<j<n$, then by Definition~\ref{def:chainedASM} we know
$\displaystyle\sum_{j_0=1}^j A^{(\ell)}_{i,j_0}=\displaystyle\sum_{j_0=1}^{j-1} A^{(\ell)}_{i,j_0}$, so 
by $(1)$ of Definition~\ref{def:chained_ice}, $W$ and $E$ are
either both directed left or both directed right. 

If $j=1$ and $\ell$ is  odd, then $W$ is a boundary edge that is directed right by $(4)$, and by $(1)$, $E$ is directed to the right as well.

If $j=1$ and $\ell$ is even, then $W$ is a boundary edge that is directed left by $(4)$, and by $(1)$, $E$ is directed to the left as well.

If $j=n$ and $\ell$ is odd, then $E$ is a chaining edge that by $(3)$ is directed left if $\displaystyle\sum_{j_0=1}^n A^{(\ell)}_{i,j_0}=1$ and right if 
the sum is $0$. By $(1)$, $W$ is directed left if $\displaystyle\sum_{j_0=1}^{n-1} A^{(\ell)}_{i,j_0}=1$ and right if 
the sum if $0$. We know  $\displaystyle\sum_{j_0=1}^{n-1} A^{(\ell)}_{i,j_0}= \displaystyle\sum_{j_0=1}^{n} A^{(\ell)}_{i,j_0}$, so $W$ and $E$ are either both directed left or both directed right.

If $j=n$ and $\ell$ is  even, then $E$ is a chaining edge that is directed right if $\displaystyle\sum_{j_0=1}^n A^{(\ell)}_{i,j_0}=1$ and left if 
the sum is $0$. By $(1)$, $W$ is directed right if $\displaystyle\sum_{j_0=1}^{n-1} A^{(\ell)}_{i,j_0}=1$ and left if 
the sum is $0$. We know  $\displaystyle\sum_{j_0=1}^{n-1} A^{(\ell)}_{i,j_0}= \displaystyle\sum_{j_0=1}^{n} A^{(\ell)}_{i,j_0}$, so $W$ and $E$ are either both directed left or both directed right.

Similarly, if $1<i<n$, we know that  $\displaystyle\sum_{j_0=1}^n A^{(\ell-1)}_{j,j_0} + \displaystyle\sum_{j_0=1}^{n-i} A^{(\ell)}_{n+1-j_0,j} = \displaystyle\sum_{j_0=1}^n A^{(\ell-1)}_{j,j_0} + \displaystyle\sum_{j_0=1}^{n-i+1} A^{(\ell)}_{n+1-j_0,j}$, 
so by $(2)$, $N$ and $S$ either are both directed up or both directed down. 

If $i=1$ and $\ell$ is odd, then $N$ is a boundary edge directed up by $(4)$, and by $(2)$, $S$ is directed up as well.

If $i=1$ and $\ell$ is even, then $N$ is a boundary edge directed down by $(4)$, and by $(2)$, $S$ is directed down as well.

If $i=n$ and $\ell$ is odd, then $S$ is a chaining edge directed up if $\displaystyle\sum_{j_0=1}^n A^{(\ell-1)}_{i,j_0}=1$ and down if the sum is $0$. By $(2)$, $N$ is directed up if the $\displaystyle\sum_{j_0=1}^n A^{(\ell-1)}_{i,j_0}+A^{(\ell)}_{n,j}=1$ and down if the sum is $0$. Since $A^{(\ell)}_{n,j}=0$ by assumption, $N$ and $S$ is either both directed up or both directed down.

If $i=n$ and $\ell$ is even, then $S$ is a chaining edge directed down if $\displaystyle\sum_{j_0=1}^n A^{(\ell-1)}_{i,j_0}=1$ and up if the sum is $0$. By $(2)$, $N$ is directed down if the $\displaystyle\sum_{j_0=1}^n A^{(\ell-1)}_{i,j_0}+A^{(\ell)}_{n,j}=1$ and up if the sum is $0$. Since $A^{(\ell)}_{n,j}=0$ by assumption, $N$ and $S$ are either both directed up or both directed down.

Thus, $v_{i,j}^{(\ell)}$ is of one of the first four configurations  in Figure~\ref{fig:6V}.

\vspace{1ex}
\textbf{Case $A_{i,j}^{(\ell)}=1$:} 

If $1<j<n$, then by Definition~\ref{def:chainedASM} it must be that
$\displaystyle\sum_{j_0=1}^{j-1} A^{(\ell)}_{i,j_0}=0$ and $\displaystyle\sum_{j_0=1}^{j} A^{(\ell)}_{i,j_0}=1$, so by $(1)$ $W$ and $E$ 
are directed in opposite directions. If $\ell$ is odd, $W$ is directed right and $E$ is directed left. If $\ell$ is even, $W$ is directed left and $E$ is directed right. 

If $j=1$ and $\ell$ is  odd, then $W$ is a boundary edge that is directed right by $(4)$, and by $(1)$, $E$ is directed to the left.

If $j=1$ and $\ell$ is  even, then $W$ is a boundary edge that is directed left by $(4)$, and by $(1)$, $E$ is directed right.

If $j=n$ and $\ell$ is  odd, then $E$ is a chaining edge that is directed left by $(3)$ since $\displaystyle\sum_{j_0=1}^n A^{(\ell)}_{i,j_0}=1$. 
By $(1)$, $W$ is directed right since $\displaystyle\sum_{j_0=1}^{n-1} A^{(\ell)}_{i,j_0}=0$.

If $j=n$ and $\ell$ is  even, then $E$ is a chaining edge that is directed right by $(3)$ since $\displaystyle\sum_{j_0=1}^n A^{(\ell)}_{i,j_0}=1$. By $(1)$, $W$ is directed  left since 
$\displaystyle\sum_{j_0=1}^{n-1} A^{(\ell)}_{i,j_0}=0$.

Similarly, if $1<i<n$, we know by Definition~\ref{def:chainedASM} that $\displaystyle\sum_{j_0=1}^n A^{(\ell-1)}_{j,j_0} + \displaystyle\sum_{j_0=1}^{n-i+1} A^{(\ell)}_{n+1-j_0,j}=1$ and $\displaystyle\sum_{j_0=1}^n A^{(\ell-1)}_{j,j_0} + \displaystyle\sum_{j_0=1}^{n-i} A^{(\ell)}_{n+1-j_0,j}=0$, so 
by $(2)$ $N$ and $S$ are directed in opposite directions. If $\ell$ is odd, $N$ is directed up and $S$ is directed down. If $\ell$ is even, $N$ is directed down and $S$ is directed up. 

If $i=1$ and $\ell$ is  odd, then $N$ is a boundary edge directed up by $(4)$, and by $(2)$, $S$ is directed down.

If $i=1$ and $\ell$ is  even, then $N$ is a boundary edge directed down by $(4)$, and by $(2)$, $S$ is directed up.

If $i=n$ and $\ell$ is  odd, then $S$ is a chaining edge directed down by $(3)$ since $\displaystyle\sum_{j_0=1}^n A^{(\ell-1)}_{i,j_0}=0$. By $(2)$, $N$ is directed up since $\displaystyle\sum_{j_0=1}^n A^{(\ell-1)}_{i,j_0}+A^{(\ell)}_{n,j}=1$.

If $i=n$ and $\ell$ is  even, then $S$ is a chaining edge directed up by $(3)$ since $\displaystyle\sum_{j_0=1}^n A^{(\ell-1)}_{i,j_0}=0$. By $(2)$, $N$ is directed down since the $\displaystyle\sum_{j_0=1}^n A^{(\ell-1)}_{i,j_0}+A^{(\ell)}_{n,j}=1$.

In summary, if $\ell$ is odd, $N$ is directed up, $S$ is directed down, $W$ is directed right, and $E$ is directed left, so $v_{i,j}^{(\ell)}$ is in Configuration V. If $\ell$ is even, $N$ is directed down, $S$ is directed up, $W$ is directed left, and $E$ is directed right, so $v_{i,j}^{(\ell)}$ is in Configuration VI.

\vspace{1ex}
\textbf{Case $A_{i,j}^{(\ell)}=-1$:} 

If $1<j<n$, then by Definition~\ref{def:chainedASM} it must be that
$\displaystyle\sum_{j_0=1}^j A^{(\ell)}_{i,j_0}=0$ and $\displaystyle\sum_{j_0=1}^{j-1} A^{(\ell)}_{i,j_0}=1$, so by $(1)$ $W$ and $E$
are directed in opposite directions. If $\ell$ is odd, $W$ is directed left and $E$ is directed right. If $\ell$ is even, $W$ is directed right and $E$ is directed left.

We cannot have $j=1$ in this case, since then the partial row sum would be negative, contradicting Property $(1)$ of Definition~\ref{def:chainedASM}.

If $j=n$ and $\ell$ is odd, then $E$ is a chaining edge that is directed right by $(3)$ since $\displaystyle\sum_{j_0=1}^n A^{(\ell)}_{i,j_0}=0$.
By $(1)$, $W$ is directed left since $\displaystyle\sum_{j_0=1}^{n-1} A^{(\ell)}_{i,j_0}=1$. 

If $j=n$ and $\ell$ is  even, then $E$ is a chaining edge that is directed left by $(3)$ since $\displaystyle\sum_{j_0=1}^n A^{(\ell)}_{i,j_0}=0$. By $(1)$, $W$ is directed  right since 
$\displaystyle\sum_{j_0=1}^{n-1} A^{(\ell)}_{i,j_0}=1$.

Similarly, if $1<i<n$ we also know that $\displaystyle\sum_{j_0=1}^n A^{(\ell-1)}_{j,j_0} + \displaystyle\sum_{j_0=1}^{n-i+1} A^{(\ell)}_{n+1-j_0,j}=0$ and $\displaystyle\sum_{j_0=1}^n A^{(\ell-1)}_{j,j_0} + \displaystyle\sum_{j_0=1}^{n-i} A^{(\ell)}_{n+1-j_0,j}=1$, so by $(2)$ $N$ and $S$ are directed in opposite directions. If $\ell$ is odd, $N$ is directed down and $S$ is directed up. If $\ell$ is even, $N$ is directed up and $S$ is directed down. 

We cannot have $i=1$ in this case, since then the total row/column sum $\displaystyle\sum_{j=1}^n A^{(\ell-1)}_{i,j} + \displaystyle\sum_{j=1}^n A^{(\ell)}_{n+1-j,i}$ would be $0$, contradicting the maximality of Property $(3)$ of Definition~\ref{def:chainedASM} (since by Lemma~\ref{prop:asmsum}, each total row/column sum must equal $1$ so that the sum of all the entries in all the matrices is $\frac{nk}{2}$).

If $i=n$ and $\ell$ is  odd, then $S$ is a chaining edge directed up by $(3)$ since $\displaystyle\sum_{j_0=1}^n A^{(\ell-1)}_{i,j_0}=1$. By $(2)$, $N$ is directed down since $\displaystyle\sum_{j_0=1}^n A^{(\ell-1)}_{i,j_0}+A^{(\ell)}_{n,j}=0$.

If $i=n$ and $\ell$ is even, then $S$ is a chaining edge directed down by $(3)$ since $\displaystyle\sum_{j_0=1}^n A^{(\ell-1)}_{i,j_0}=1$. By $(2)$, $N$ is directed up since the $\displaystyle\sum_{j_0=1}^n A^{(\ell-1)}_{i,j_0}+A^{(\ell)}_{n,j}=0$.

In summary, if $\ell$ is odd, $N$ is directed down, $S$ is directed up, $W$ is directed left, and $E$ is directed right, so $v_{i,j}^{(\ell)}$ is in Configuration VI. If $\ell$ is even, $N$ is directed up, $S$ is directed down, $W$ is directed right, and $E$ is directed left, so $v_{i,j}^{(\ell)}$ is in Configuration V.

Therefore, each interior vertex is in one of the six configurations of Figure~\ref{fig:6V}.

\vspace{1ex}
Conversely, suppose a directed graph $G$ with underlying graph $GG_{n,k}$ has chained domain wall boundary conditions and each interior vertex has two edges entering and two edges leaving. We wish to show that $G$ is a chained ice configuration of a chained alternating sign matrix in $ASM_{n,k}^{\circ}$. 

Construct a tuple of matrices $A=\left(A^{(1)},A^{(2)},\ldots,A^{(k)}\right)$ as follows. If $v_{i,j}^{(\ell)}$ is in any of Configurations I--IV,
then let $A_{i,j}^{(\ell)}=0$. If $\ell$ is odd and $v_{i,j}^{(\ell)}$ is in Configuration V or if $\ell$ is even and $v_{i,j}^{(\ell)}$ is in Configuration VI,
let $A_{i,j}^{(\ell)}=1$. If $\ell$ is odd and $v_{i,j}^{(\ell)}$ is in Configuration VI or $\ell$ is even and $v_{i,j}^{(\ell)}$ is in Configuration V,
let $A_{i,j}^{(\ell)}=-1$. 

First note that Configurations I--IV each have horizontal edges both directed left or both directed right and vertical edges both directed up or both directed down. So given the boundary conditions and the placement of the vertices in Configurations V and VI, we may reconstruct the entire graph.  
Therefore, even though Configurations I--IV all map to $0$ entries in $A$, the map described above is injective. 

We wish to show $A$ is in $ASM_{n,k}^{\circ}$, so we need to show Properties $(1)-(3)$ in Definition~\ref{def:chainedASM}. 

\vspace{1ex}
Property $(1)$ holds because of the following.

\textbf{Case $\ell$ odd:}  By the chained domain wall boundary conditions ($(4)$ of Definition~\ref{def:chained_ice}), the boundary edges between $v_{i,0}^{(\ell)}$ and 
$v_{i,1}^{(\ell)}$
are directed inward (right) for all $i$.  So  $v_{i,1}^{(\ell)}$ is in one of Configurations I, IV, or V. Thus $A_{i,1}^{(\ell)}$ is either $0$ or $1$. From left to right across a row of $A^{(\ell)}$, a $0$ of Configuration I or IV may only be followed by another I or IV or a V, so the first nonzero entry of row $i$ in $A^{(\ell)}$ is $1$.

Using reasoning as in the previous sentences, each row $(v_{i,1}^{(\ell)},v_{i,2}^{(\ell)},\ldots,v_{i,n}^{(\ell)})$ of $G$ 
looks like the following (starting with I/IV or V, repeating cyclically and ending at any point): some number of I and/or IV configurations, then a single V, followed by some number of II and/or III, then a single VI, some number of I/IV, and so on. At each point in this sequence, the corresponding row partial sum $\displaystyle\sum_{j=1}^{m}A_{i,j}^{(\ell)}$ is $0$ or $1$, so Property $(1)$ is satisfied in this case. Now if the last entry $v_{i,n}^{(\ell)}$ in row $i$ is in one of Configurations II, III, or V, then row $i$ has a total sum of $1$; otherwise, row $i$ has a total sum of $0$. 

\textbf{Case $\ell$ even:}  By the chained domain wall boundary conditions, the boundary edges between $v_{i,0}^{(\ell)}$ and 
$v_{i,1}^{(\ell)}$ 
are directed outward (left) for all $i$. So  $v_{i,1}^{(\ell)}$ is in one of Configurations II, III, or VI. Thus $A_{i,1}^{(\ell)}$ is either $0$ or $1$. From left to right across a row of $A^{(\ell)}$, a $0$ of Configuration II or III may only be followed by another II or III or a VI, so the first nonzero entry of row $i$ in $A^{(\ell)}$ is $1$.

Using reasoning as in the previous sentences, each row 
$(v_{i,1}^{(\ell)},v_{i,2}^{(\ell)},\ldots,v_{i,n}^{(\ell)})$
looks like the following (starting with II/III or VI, repeating cyclically and ending at any point): some number of II and/or III configurations, then a single VI, followed by some number of I and/or IV, then a single V, some number of II/III, and so on. At each point in this sequence, the corresponding row partial sum $\displaystyle\sum_{j=1}^{m}A_{i,j}^{(\ell)}$ is $0$ or $1$, so Property $(1)$ is satisfied in this case. Now if the last entry in row $i$ is in one of Configurations I, IV, or VI, then row $i$ has a total sum of $1$; otherwise, row $i$ has a total sum of $0$. 

\vspace{1ex}
To show Property $(2)$, we examine the structure of the columns, together with their connecting row. 

\textbf{Case $\ell$ odd:} By the chained domain wall boundary conditions, the boundary edges between $v_{0,i}^{(\ell)}$ and $v_{1,i}^{(\ell)}$ are directed upward for all $i$. 
So $v_{1,i}^{(\ell)}$ is in one of Configurations I, III, or V. Using similar reasoning, each column $(v_{1,i}^{(\ell)},v_{2,i}^{(\ell)},\ldots,v_{n,i}^{(\ell)})$ looks like the following (starting with I/III or V, repeating cyclically and ending at any point): some number of I and/or III configurations, followed by a single V, then some number of II or IV, followed by a single VI, some number of I/III, and so on. At each point in this sequence, the corresponding column partial sum $\displaystyle\sum_{j=1}^{m}A_{j,i}^{(\ell)}$ is $0$ or $1$. 

Now to show Property $(2)$, recall that row $i$ of $A^{(\ell)}$ chains to  column $i$ of $A^{(\ell+1)}$. 

\textbf{Subcase $\displaystyle\sum_{j=1}^{n}A_{i,j}^{(\ell)}=0$}:
If the total sum of row $i$ of $A^{(\ell)}$ is $0$, then the chaining edge is directed from $v_{i,n}^{(\ell)}$ to $v_{n,i}^{(\ell+1)}$, so $v_{n,i}^{(\ell+1)}$ is in Configuration I, III, or VI. We follow the reverse cyclic rotation of configurations from what was described in the previous paragraph (since we are summing the columns of $A^{(\ell+1)}$ from bottom to top),
so the first nonzero entry in column i of $A^{(\ell+1)}$ from the bottom is $1$ (Configuration VI). By the analysis of the columns, the sum of row $i$ of $A^{(\ell)}$ added to the any partial sum from the bottom of column $i$ of $A^{(\ell+1)}$ is always $0$ or $1$. 

\textbf{Subcase $\displaystyle\sum_{j=1}^{n}A_{i,j}^{(\ell)}=1$}:
If the total sum of row $i$ of $A^{(\ell)}$ is $1$, then the chaining edge is directed from $v_{n,i}^{(\ell+1)}$ to $v_{i,n}^{(\ell)}$, so $v_{n,i}^{(\ell+1)}$ is in Configuration II, IV, or V. We again follow the reverse cyclic rotation of configurations, 
so the first nonzero entry in column i of $A^{(\ell+1)}$ from the bottom is $-1$ (Configuration VI). By the analysis of the columns, the sum of row $i$ of $A^{(\ell)}$ added to any partial sum from the bottom of column $i$ of $A^{(\ell+1)}$  is always $0$ or $1$. So Property $(2)$ holds in this case. Also, in either subcase, the total row/column sum $\displaystyle\sum_{j=1}^{n}\left(A_{i,j}^{(\ell)}+ A_{j,i}^{(\ell+1)}\right)$ is $1$.

\textbf{Case $\ell$ even:} By the chained domain wall boundary conditions, the boundary edges between $v_{0,i}^{(\ell)}$ and $v_{1,i}^{(\ell)}$ are directed downward for all $i$. 
So $v_{1,i}^{(\ell)}$ is in one of Configurations II, IV, or VI. Using similar reasoning,  each column $(v_{1,i}^{(\ell)},v_{2,i}^{(\ell)},\ldots,v_{n,i}^{(\ell)})$ looks like the following (starting with II/IV or VI, repeating cyclically and ending at any point): some number of II and/or IV, followed by a single VI, then some number of I and/or III, followed by a single V, then some number of II/IV, and so on. At each point in this sequence, the corresponding column partial sum $\displaystyle\sum_{j=1}^{m}A_{j,i}^{(\ell)}$ is $0$ or $1$. 

\textbf{Subcase $\displaystyle\sum_{j=1}^{n}A_{i,j}^{(\ell)}=0$}:
If the total sum of row $i$ of $A^{(\ell)}$ is $0$, then the chaining edge is directed from  $v_{n,i}^{(\ell+1)}$ to $v_{i,n}^{(\ell)}$, so $v_{n,i}^{(\ell+1)}$ is in Configuration II, IV, or V. We follow the reverse cyclic rotation of configurations from what was described in the previous paragraph (since we are summing the columns of $A^{(\ell+1)}$ from bottom to top),
so the first nonzero entry in column i of $A^{(\ell+1)}$ from the bottom is $1$ (Configuration V). By the analysis of the columns, the sum of row $i$ of $A^{(\ell)}$ added to the partial sum from the bottom of column $i$ of $A^{(\ell+1)}$ is always $0$ or $1$. 

\textbf{Subcase $\displaystyle\sum_{j=1}^{n}A_{i,j}^{(\ell)}=1$}:
If the total sum of row $i$ of $A^{(\ell)}$  is $1$, then the chaining edge is directed from $v_{i,n}^{(\ell)}$ to $v_{n,i}^{(\ell+1)}$, so $v_{n,i}^{(\ell+1)}$ is in Configuration I, III, or VI. We again follow the reverse cyclic rotation of configurations, 
so the first nonzero entry in column i of $A^{(\ell+1)}$ from the bottom is $-1$ (Configuration VI). By the analysis of the columns, the sum of row $i$ of $A^{(\ell)}$ added to the partial sum from the bottom of column $i$ of $A^{(\ell+1)}$ is always $0$ or $1$. So Property $(2)$ holds in this case.   Also, in either subcase, the total row/column sum $\displaystyle\sum_{j=1}^{n}\left(A_{i,j}^{(\ell)}+ A_{j,i}^{(\ell+1)}\right)$ is $1$.

\vspace{1ex}
To show Property $(3)$, recall that the maximum sum of entries in $ASM^{\circ}_{n,k}$ for $k$ even is $\frac{nk}{2}$. We have shown the total row/column sum for each connecting row/column pair is $1$. So the sum of all the entries is $\frac{nk}{2}$, proving Property $(3)$.

Thus $A$ is in $ASM_{n,k}^{\circ}$, and the map described in Definition~\ref{def:chained_ice} gives $G$. Thus $G$ is the chained ice configuration corresponding to the chained alternating sign matrix $A$.
\end{proof}

\section*{Acknowledgments}
Striker is supported in part by the National Security Agency grant  H98230-15-1-0041. The authors  thank the developers of Sage~\cite{sage} open source mathematical software, which was helpful for some calculations, as well as the anonymous referees for helpful comments. Most of the figures in this paper were drawn using Ipe~\cite{ipe}.

\bibliographystyle{plain}
\bibliography{rooks}

\end{document}